\theoremstyle{plain}
\newtheorem{thm}{Theorem}[section]
\newtheorem{lem}[thm]{Lemma}
\newtheorem{propalph}{Proposition}
\newtheorem{lemalph}[propalph]{Lemma}
\theoremstyle{definition}
\newtheorem{defn}[thm]{Definition}
\newtheorem*{defn*}{Definition}
\newtheorem{claim}{Claim}[thm]
\newtheorem*{question*}{Question}
\newtheorem{example}{Example}
\begin{document}

\title[Nadler-Quinn problem, simplicial systems]{The Nadler-Quinn problem for\\simplicial inverse systems}
\author{Andrea Ammerlaan \and Ana Anu\v{s}i\'{c} \and Logan C. Hoehn}
\date{\today}

\address{Nipissing University, Department of Computer Science \& Mathematics, 100 College Drive, Box 5002, North Bay, Ontario, Canada, P1B 8L7}
\email{ajammerlaan879@my.nipissingu.ca}
\email{anaa@nipissingu.ca}
\email{loganh@nipissingu.ca}

\dedicatory{Dedicated to the memory of Piotr Minc}

\thanks{This work was supported by NSERC grant RGPIN-2019-05998}

\subjclass[2020]{Primary 54F15, 54C25; Secondary 54F50}
\keywords{Plane embeddings; accessible point; arc-like continuum}

\begin{abstract}
We show that if $X$ is an arc-like continuum which can be represented as an inverse limit of a simplicial inverse system on arcs, then for every point $x \in X$ there is a plane embedding of $X$ in which $x$ is accessible.  This answers a special case of the Nadler-Quinn question from 1972.
\end{abstract}

\maketitle

\section{Introduction}
\label{sec:intro}

In this paper we continue the study of plane embeddings of arc-like continua and their accessible sets from \cite{ammerlaan-anusic-hoehn2023}.  A \emph{continuum} is a compact, connected, metric space, and a continuum is \emph{arc-like} if it homeomorphic to an inverse limit $\varprojlim \left \langle [-1,1],f_n \right \rangle$, where $f_n \colon [-1,1] \to [-1,1]$ are continuous functions which can be taken to be piecewise-linear and onto.  We are interested in answering the question of Nadler and Quinn from 1972, which first appeared in \cite[p. 229]{nadler1972} and \cite{nadler-quinn1972}, and has subsequently appeared over the years in works by different authors and in various forms (see Problem 140 in \cite{lewis1983}, Question 16 in \cite{problems2002}, and Problem 1, Problem 49, and Problem 51 in \cite{problems2018}).  The question of Nadler and Quinn asks whether for every arc-like continuum $X$, and every $x \in X$, there exists a plane embedding $\Omega \colon X \to \mathbb{R}^2$ such that $\Omega(x)$ is an \emph{accessible} point of $\Omega(X)$, i.e.\ such that there is an arc $A \subset \mathbb{R}^2$ with $A \cap \Omega(X) = \{\Omega(x)\}$.

As pointed out in \cite{ammerlaan-anusic-hoehn2023}, by a simple reduction one can see that this question of Nadler and Quinn is equivalent to the following:

\begin{question*}[Question~1 of \cite{ammerlaan-anusic-hoehn2023}]
If $f_n \colon [-1,1] \to [-1,1]$, $n = 1,2,\ldots$, are piecewise-linear maps such that $f_n(0) = 0$ for each $n$, does there exist an embedding of $X = \varprojlim \left \langle [-1,1],f_n \right \rangle$ into $\mathbb{R}^2$ for which the point $\langle 0,0,\ldots \rangle \in X$ is accessible?
\end{question*}

\noindent The main result of \cite[Theorem~7.3]{ammerlaan-anusic-hoehn2023} shows that if for every $n \geq 1$,
\begin{enumerate}
\item the radial contour factors of $f_n$ and $f_n\circ f_{n+1}$ are equal, and
\item $f_n$ has no contour twins,
\end{enumerate}
then $\varprojlim \left \langle [-1,1],f_n \right \rangle$ can be embedded in the plane such that $\langle 0,0,\ldots \rangle$ is accessible.  For the definitions of radial contour factors and contour twins see \cite{ammerlaan-anusic-hoehn2023} and Section~\ref{sec:rad deps} below.  The main result of this paper answers Question~2 of \cite{ammerlaan-anusic-hoehn2023}, by eliminating condition (2) above, see Theorem~\ref{thm:same contour}.  As such, we will not discuss the notion of contour twins in this paper.  As a corollary, we show that if $\left \langle [-1,1],f_n \right \rangle$ is a simplicial inverse system, then for every point $x \in X = \varprojlim \left \langle [-1,1],f_n \right \rangle$ there is a plane embedding of $X$ in which $x$ is accessible.  This special case of the Nadler-Quinn question was inspired by related questions of P.\ Minc in \cite[p.296]{problems2018}.  See Section~\ref{sec:simplicial} below for the definition of a simplicial inverse system.

The paper is structured as follows.  In Section~\ref{sec:prelim} we give general preliminaries and recall notions and properties of (radial) departures and (radial) contour factors from \cite{ammerlaan-anusic-hoehn2023}.  In Section~\ref{sec:bridging} we state ``bridging lemmas'', which will allow us to change maps on certain intervals while preserving the composition with the radial contour factor.  In Section~\ref{sec:same contour} we show how the bridging lemmas can be used to answer the Nadler-Quinn question in positive if the radial contour factors of bonding maps are preserved with the composition, see Theorem~\ref{thm:same contour}.  In Section~\ref{sec:simplicial} we apply Theorem~\ref{thm:same contour} to give a positive answer to the question of Nadler and Quinn for arc-like continua which are inverse limits of simplicial systems, see Theorem~\ref{thm:simplicial}.

\section{Preliminaries}
\label{sec:prelim}

By a \emph{map} we mean a continuous function.  Given a map $f$, we denote the domain of $f$ by $\operatorname{dom} f$.  If $f \colon [-1,1] \to [-1,1]$ is a map, we call it \emph{piecewise-linear} if there exists $n \in \mathbb{N}$ and $-1 = c_0 < c_1 < \cdots < c_n = 1$ such that $f$ is linear on $[c_{i-1},c_{i}]$ for each $i \in \{1,\ldots,n\}$.

A \emph{continuum} is a compact, connected, metric space.  For a sequence $X_n$, $n \geq 1$, of continua, and a sequence $f_n \colon X_{n+1} \to X_{n}$, $n \geq 1$, of maps, the sequence
\[ \left \langle X_{n},f_n \right \rangle_{n\geq 1} = \left \langle X_1,f_1,X_2,f_2,X_3,\ldots \right \rangle \]
is called an \emph{inverse system}.  The spaces $X_n$ are called \emph{factor spaces}, and the maps $f_n$ are called \emph{bonding maps}.  The \emph{inverse limit} of an inverse system $\left \langle X_n,f_n \right \rangle_{n\geq 1}$ is the space
\[ \varprojlim \langle X_n,f_n \rangle_{n\geq 1} = \{(x_1,x_2,\ldots): f_n(x_{n+1}) = x_n \textrm{ for each } n \geq 1\} \subseteq \prod_{n \geq 1} X_n ,\]
equipped with the subspace topology inherited from the product topology on $\prod_{n \geq 1} X_n$.  It is also a continuum.  When the index set is clear, we will use a shortened notation $\varprojlim \left \langle X_n,f_n \right \rangle$.  Note that the inverse limit representation of a continuum is not unique, for example:
\begin{itemize}
\item (Dropping finitely many coordinates) If $n_0 \geq 1$, then $\varprojlim \left \langle X_n,f_n \right \rangle_{n \geq 1} \approx \varprojlim \left \langle X_n,f_n \right \rangle_{n \geq n_0}$.
\item (Composing bonding maps) If $1 = n_1 < n_2 < n_3 < \ldots$ is an increasing sequence of integers, then $\varprojlim \left \langle X_n,f_n \right \rangle_{n \geq 1} \approx \varprojlim \left \langle X_{n_i},f_{n_i} \circ \cdots \circ f_{n_{i+1}-1} \right \rangle_{i \geq 1}$.
\end{itemize}

A continuum $X$ is called {\em arc-like} if $X \approx \varprojlim \left \langle [-1,1],f_n \right \rangle_{n \geq 1}$ for some sequence of maps $f_n \colon [-1,1] \to [-1,1]$, $n \geq 1$.  By \cite{brown1960}, we can without loss of generality assume that these maps $f_n$ are all piecewise-linear.

\subsection{Radial departures and radial contour factors}
\label{sec:rad deps}

In this subsection, we recall the definitions of radial departures and radial contour factorizations from \cite{ammerlaan-anusic-hoehn2023}, and some of the basic results about them that we will make use of in this paper.  We use letters for these definitions and results, to distinguish them from the new results of this paper in later sections, which are labelled with numbers.  We begin with radial departures.

\begin{defn*}[Definition~4.2 of \cite{ammerlaan-anusic-hoehn2023}]
Let $f \colon [-1,1] \to [-1,1]$ be a map with $f(0) = 0$.  A \emph{radial departure} of $f$ is a pair $\langle x_1,x_2 \rangle$ such that $-1 \leq x_1 < 0 < x_2 \leq 1$ and either:
\begin{enumerate}[label=(\arabic{*})]
\item \label{pos dep} $f((x_1,x_2)) = (f(x_1),f(x_2))$; or
\item \label{neg dep} $f((x_1,x_2)) = (f(x_2),f(x_1))$.
\end{enumerate}
We say a radial departure $\langle x_1,x_2 \rangle$ of $f$ is \emph{positively oriented} (or a \emph{positive radial departure}) if \ref{pos dep} holds, and $\langle x_1,x_2 \rangle$ is \emph{negatively oriented} (or a \emph{negative radial departure}) if \ref{neg dep} holds.
\end{defn*}

\begin{propalph}[Proposition~4.5 of \cite{ammerlaan-anusic-hoehn2023}]
\label{prop:alt dep nested}
Let $f \colon [-1,1] \to [-1,1]$ be a map with $f(0) = 0$.  Suppose $\langle x_1,x_2 \rangle$ and $\langle x_1',x_2' \rangle$ are radial departures of $f$ with opposite orientations.  Then either
\[ x_1 < x_1' < 0 < x_2' < x_2 \quad \textrm{or} \quad x_1' < x_1 < 0 < x_2 < x_2' .\]
\end{propalph}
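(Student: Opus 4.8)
The plan is to prove this by a direct, elementary case analysis, using two basic consequences of the definition of a radial departure together with the hypothesis $f(0) = 0$.

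First I would record the following observations. If $\langle x_1, x_2 \rangle$ is a \emph{positive} radial departure of $f$, then $f((x_1,x_2)) = (f(x_1),f(x_2))$ forces $f(x_1) < f(x_2)$ (the right-hand side must be a nonempty interval), and since $0 \in (x_1,x_2)$ and $f(0) = 0$ we get $0 = f(0) \in f((x_1,x_2)) = (f(x_1),f(x_2))$, so in fact $f(x_1) < 0 < f(x_2)$; moreover $f(y) \in (f(x_1),f(x_2))$ for every $y \in (x_1,x_2)$. Symmetrically, if $\langle x_1', x_2' \rangle$ is a \emph{negative} radial departure, then $f(x_2') < 0 < f(x_1')$ and $f(y) \in (f(x_2'),f(x_1'))$ for every $y \in (x_1',x_2')$. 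By relabelling, I may assume $\langle x_1, x_2 \rangle$ is positive and $\langle x_1', x_2' \rangle$ is negative.

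Since $x_1, x_1' < 0 < x_2, x_2'$, proving the statement amounts to ruling out the following possibilities: (a) $x_1 = x_1'$; (b) $x_2 = x_2'$; (c) the two ``crossing'' configurations $x_1 < x_1' < 0 < x_2 < x_2'$ and $x_1' < x_1 < 0 < x_2' < x_2$. For (a): $x_1 = x_1'$ would force $f(x_1) < 0$ from the positive departure and $f(x_1') > 0$ from the negative one, a contradiction; case (b) is identical using the positive-side endpoints. For the first crossing configuration in (c): from $x_1' \in (x_1,x_2)$ and the trapping property of the positive departure we get $f(x_1') < f(x_2)$, while from $x_2 \in (x_1',x_2')$ and the trapping property of the negative departure we get $f(x_2) < f(x_1')$, a contradiction. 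The second crossing configuration is handled the same way, using $x_2' \in (x_1,x_2)$ and $x_1 \in (x_1',x_2')$ to obtain $f(x_1) < f(x_2')$ and $f(x_2') < f(x_1)$; alternatively, it follows by applying the first case to the map $x \mapsto f(-x)$, which preserves $f(0)=0$ and interchanges the orientations of radial departures. Eliminating (a), (b), and (c) leaves exactly the two nested configurations in the statement.

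I do not expect a genuine obstacle here; the argument is elementary. The only mildly delicate point is organizing the case analysis so that the trapping property of each departure — that the image of its open interval lies strictly between the values of $f$ at its two endpoints — is applied at the correct ``interior'' endpoint of the other departure. Exploiting the reflection symmetry $x \mapsto f(-x)$ halves the number of crossing cases and keeps the write-up short.
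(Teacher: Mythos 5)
Your argument is correct. Note, however, that this paper does not prove the statement at all: it is imported verbatim as Proposition~A (Proposition~4.5 of the earlier paper of Ammerlaan, Anu\v{s}i\'{c}, and Hoehn) and used as a black box, so there is no in-paper proof to compare against. Your self-contained verification works exactly as written: the definition of a radial departure gives, for a positive departure $\langle x_1,x_2\rangle$, that $f(x_1)<0<f(x_2)$ (since $0=f(0)$ lies in the image interval) and that every $y\in(x_1,x_2)$ has $f(y)$ strictly between $f(x_1)$ and $f(x_2)$, with the symmetric facts for a negative departure; the equality cases $x_1=x_1'$ and $x_2=x_2'$ die on the sign of the endpoint values, and each crossing configuration yields the two incompatible strict inequalities you exhibit (e.g.\ $f(x_1')<f(x_2)$ from $x_1'\in(x_1,x_2)$ versus $f(x_2)<f(x_1')$ from $x_2\in(x_1',x_2')$). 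The reflection $x\mapsto f(-x)$ does interchange orientations as you claim, so using it to halve the crossing cases is legitimate, though the direct second computation you give makes it unnecessary. This is presumably essentially the argument of the cited source, since the statement is an elementary consequence of the definition.
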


Though it is not explicitly stated in Proposition~\ref{prop:alt dep nested}, it is a simple consequence that if $\langle x_1,x_2 \rangle$ is a positive radial departure of $f$ and $\langle x_1',x_2' \rangle$ is a negative radial departure of $f$, then either
\[ f(x_1) < f(x_2') < 0 < f(x_1') < f(x_2) \quad \textrm{or} \quad f(x_2') < f(x_1) < 0 < f(x_2) < f(x_1') .\]

Thus the presence of radial departures of opposite orientations captures the essence of a ``zig-zag'' pattern in the graph of $f$ around $0$.  This is the key intuition behind the following result, which is the main application of radial departures.

\begin{propalph}[Proposition~5.2 of \cite{ammerlaan-anusic-hoehn2023}]
\label{prop:embed 0 accessible}
Let $f_n \colon [-1,1] \to [-1,1]$, $n = 1,2,\ldots$, be maps with $f_n(0) = 0$ for each $n$.  Suppose that for each $n$, all radial departures of $f_n$ have the same orientation.  Then there exists an embedding of $X = \varprojlim \left \langle [-1,1], f_n \right \rangle$ into $\mathbb{R}^2$ for which the point $\langle 0,0,\ldots \rangle \in X$ is accessible.
\end{propalph}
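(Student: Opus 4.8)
The plan is to construct $\Omega$ so that $\Omega(X)$ appears as a nested intersection $\bigcap_{n\ge 1}D_n$ of thin planar disks, built by induction to mirror the inverse system, with the distinguished point held on the boundary of every $D_n$ and an ``escape arc'' running from it out to infinity.

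In detail, assume by \cite{brown1960} that the $f_n$ are piecewise-linear, fix $p=(0,0)\in\mathbb R^2$, and fix a large circle $C$ around the region in which we shall work. By induction on $n$ I would produce: a PL arc $A_n\subset\mathbb R^2$ with a PL homeomorphism $\phi_n\colon[-1,1]\to A_n$ and $\phi_n(0)=p$; a thin closed-disk neighborhood $D_n$ of $A_n$, of width tending to $0$, with $A_n\cap\partial D_n=\{p\}$; a PL retraction $r_n\colon D_n\to A_n$; and a PL arc $\gamma_n$ from $p$ to a point of $C$ with $\gamma_n\cap D_n=\{p\}$. These should satisfy $D_{n+1}\subseteq\operatorname{int}D_n\cup\{p\}$, the pattern identity $r_n\circ\phi_{n+1}=\phi_n\circ f_n$ (so that, as its parameter runs over $[-1,1]$, the core $A_{n+1}$ folds back and forth inside $D_n$ exactly as the graph of $f_n$ dictates, touching $p$ precisely over $f_n^{-1}(0)$), and $\gamma_{n+1}=\gamma_n$ outside $D_n$. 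Then set $\Omega(X):=\bigcap_n D_n$ and $\gamma:=\{p\}\cup\bigcup_n\bigl(\gamma_n\setminus\operatorname{int}D_n\bigr)$; the routine ``thin chain'' verification (width $\to 0$, honesty of the pattern) shows that $q\mapsto\langle\phi_1^{-1}r_1(q),\phi_2^{-1}r_2(q),\dots\rangle$ is a homeomorphism $\bigcap_n D_n\to X$ carrying $p$ to $\langle 0,0,\dots\rangle$, and $\gamma$ is an arc meeting $\Omega(X)$ only in $p$, so $\langle 0,0,\dots\rangle$ is accessible.

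The hypothesis enters in the inductive step. Given $D_n,A_n,\phi_n,\gamma_n$ and $f_n$, one routes the new core $A_{n+1}$ through the thin band $D_n$. Folds of $f_n$ occurring strictly away from $0$ are harmless: they are ordinary back-and-forth folds inside the band, drawn by the usual chain-refinement technique without going near $p$. The delicate point is the local picture in a small neighborhood $U$ of $p$: each maximal parameter interval on which $f_n$ stays near $0$ yields a ``pass'' of $A_{n+1}$ through $U$ (exactly one of which touches $p\in\partial D_n$), and these passes must be nested around $p$ \emph{consistently}, i.e.\ so that none separates $p$ from $\partial D_n$ within $D_n$ --- this is what lets $p$ remain on $\partial D_{n+1}$ and lets $\gamma_n$ be extended through $D_n\setminus\operatorname{int}D_{n+1}$ to $\gamma_{n+1}$. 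These passes correspond precisely to the radial departures of $f_n$ (together with the component of $f_n^{-1}(0)$ containing $0$), and, by the order description following Proposition~\ref{prop:alt dep nested}, when all radial departures of $f_n$ share a single orientation the passes can be linearly ordered ``outward'' from $p$, making such a consistent choice possible. If $f_n$ had departures of both orientations, the forced nesting of Proposition~\ref{prop:alt dep nested} would make some pass encircle $p$ inside $D_n$, killing accessibility at that stage; this is exactly why the hypothesis cannot be omitted.

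I expect this local planar routing near $p$ to be the main obstacle: simultaneously realizing the full fold-combinatorics of $f_n$ inside a thin disk, arranging the passes near $p$ via the same-orientation hypothesis so that $p$ stays on the boundary and accessible, and doing all of this compatibly across levels so the disks genuinely nest, the escape arcs stabilize, and the limit is an embedding. Packaging ``consistently nested passes'' into a clean geometric lemma, and verifying that the limiting object is the desired embedding, is the technical heart of the argument.
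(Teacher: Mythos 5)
First, note that the paper you were given does not actually prove this statement: Proposition~\ref{prop:embed 0 accessible} is quoted verbatim from Proposition~5.2 of \cite{ammerlaan-anusic-hoehn2023} and used as a black box, so the only fair comparison is with the construction carried out there. Your strategy --- nested thin disks $D_n$ mirroring the inverse system, the distinguished point $p$ kept on $\partial D_n$ at every stage, an escape arc stabilizing outside the disks, with the same-orientation hypothesis invoked to route the passes of the new core near $p$ without enclosing it --- is the right intuition (it matches the paper's own heuristic that opposite-orientation radial departures encode a zig-zag around $0$), and it is broadly the same genre of chain/pattern construction as the cited proof. But as written it is a plan, not a proof, and you say so yourself: the ``consistently nested passes'' lemma is precisely the content of the proposition, and it is nowhere formulated or established. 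Concretely, you never say what the inductive hypothesis near $p$ is, what configuration of strands in $D_n\smallsetminus\operatorname{int}D_{n+1}$ you maintain, or how the assumption ``all radial departures of $f_n$ have one orientation'' is converted into a statement about that configuration (note that radial departures only constrain the behaviour of $f_n$ on parameter intervals straddling $0$; the other components of $f_n^{-1}(0)$ produce passes through the fiber over $p$ that must also be ordered, and your sketch does not address how these interact with the strand through $p$). Without that lemma the induction cannot even be stated, let alone closed.

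Two further points would need repair even at the level of the sketch. The phrase ``touching $p$ precisely over $f_n^{-1}(0)$'' cannot be taken literally: if $f_n^{-1}(0)$ is not a single point, an arc $A_{n+1}$ passing through $p$ at every such parameter would self-intersect; what you mean is that $A_{n+1}$ meets the fiber $r_n^{-1}(p)$ over those parameters and meets $p$ itself only at parameter $0$, and organizing those fiber crossings relative to $p\in\partial D_n$ is exactly the delicate step. Likewise the ``routine thin chain verification'' that $q\mapsto\langle\phi_1^{-1}r_1(q),\phi_2^{-1}r_2(q),\ldots\rangle$ is a homeomorphism onto $X$ requires an Anderson--Choquet/Brown-type mesh argument (exact pattern identities plus widths tending to $0$ must be shown to be simultaneously achievable at every stage, including near $p$ where the band is pinched); this is plausible but not automatic. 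Finally, the closing claim that mixed orientations at some stage would ``kill accessibility'' is unsupported and, as a statement about the continuum, false in spirit --- the entire point of the present paper and of \cite{ammerlaan-anusic-hoehn2023} is that zig-zags in a given representation can often be removed by passing to a different representation --- though since it is only an aside it does not affect the (incomplete) argument for the sufficiency direction you were asked to prove.
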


\begin{propalph}[Proposition~4.6 of \cite{ammerlaan-anusic-hoehn2023}]
\label{prop:comp dep}
Let $f,g \colon [-1,1] \to [-1,1]$ be maps with $f(0) = g(0) = 0$, and let $x_1,x_2$ be such that $-1 \leq x_1 < 0 < x_2 \leq 1$.  Then $\langle x_1,x_2 \rangle$ is a positive (respectively, negative) radial departure of $f \circ g$ if and only if either:
\begin{enumerate}
\item $\langle x_1,x_2 \rangle$ is a positive radial departure of $g$ and $\langle g(x_1),g(x_2) \rangle$ is a positive (respectively, negative) radial departure of $f$; or
\item $\langle x_1,x_2 \rangle$ is a negative radial departure of $g$ and $\langle g(x_2),g(x_1) \rangle$ is a negative (respectively, positive) radial departure of $f$.
\end{enumerate}
\end{propalph}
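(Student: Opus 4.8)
The plan is to prove the ``positive'' form of the statement first, namely that $\langle x_1,x_2 \rangle$ is a \emph{positive} radial departure of $f\circ g$ if and only if either alternative~(1) holds with ``positive'' read in both of its occurrences, or alternative~(2) holds with ``negative'' read in both of its occurrences; the ``negative'' form then follows at once by applying the positive form to $-f$ in place of $f$, since $\langle y_1,y_2 \rangle$ is a negative radial departure of a map $h$ with $h(0)=0$ exactly when it is a positive radial departure of $-h$, and $(-f)\circ g=-(f\circ g)$. Throughout I will use the elementary reformulation of the definition: for a map $h$ with $h(0)=0$ and $-1\leq x_1<0<x_2\leq 1$, the pair $\langle x_1,x_2 \rangle$ is a positive radial departure of $h$ if and only if $h(x_1)<h(t)<h(x_2)$ for every $t\in(x_1,x_2)$, and a negative radial departure of $h$ if and only if $h(x_2)<h(t)<h(x_1)$ for every $t\in(x_1,x_2)$. (In the positive case this already forces $h(x_1)<h(0)=0<h(x_2)$; the equivalence holds because $h((x_1,x_2))$ is an interval whose infimum and supremum are $h(x_1)$ and $h(x_2)$ by continuity at the endpoints, so being contained in the open interval between them is the same as equalling it.)

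For the ($\Leftarrow$) direction I would argue directly from this reformulation. If alternative~(1) holds with the positive reading, then for $t\in(x_1,x_2)$ we have $g(t)\in(g(x_1),g(x_2))$ because $\langle x_1,x_2 \rangle$ is a positive radial departure of $g$, and then $(f\circ g)(t)=f(g(t))\in(f(g(x_1)),f(g(x_2)))=((f\circ g)(x_1),(f\circ g)(x_2))$ because $\langle g(x_1),g(x_2) \rangle$ is a positive radial departure of $f$; hence $\langle x_1,x_2 \rangle$ is a positive radial departure of $f\circ g$. If alternative~(2) holds with the negative reading, the argument is identical once one observes that negativity of the departure of $g$ gives $g(x_2)<0<g(x_1)$, so that $\langle g(x_2),g(x_1) \rangle$ has the form required of a radial departure of $f$, and that $g$ now maps $(x_1,x_2)$ onto $(g(x_2),g(x_1))$, which is the relevant domain for that departure of $f$.

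For the ($\Rightarrow$) direction, assume $\langle x_1,x_2 \rangle$ is a positive radial departure of $f\circ g$, so $(f\circ g)(x_1)<0<(f\circ g)(x_2)$ and in particular $g(x_1)\neq g(x_2)$. Suppose first $g(x_1)<g(x_2)$. The crucial step is to show $g((x_1,x_2))\subseteq(g(x_1),g(x_2))$: if not, there is $t_0\in(x_1,x_2)$ with $g(t_0)\geq g(x_2)$ or $g(t_0)\leq g(x_1)$, and using that $g$ takes values close to $g(x_1)$ at points of $(x_1,x_2)$ near $x_1$ and values close to $g(x_2)$ at points near $x_2$, the intermediate value theorem produces $t_1\in(x_1,x_2)$ with $g(t_1)=g(x_2)$ in the first case and $g(t_1)=g(x_1)$ in the second; either way $(f\circ g)(t_1)\in\{(f\circ g)(x_1),(f\circ g)(x_2)\}$, contradicting that $\langle x_1,x_2 \rangle$ is a positive radial departure of $f\circ g$ by the reformulation. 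By the preliminary remark this containment is in fact an equality, $g((x_1,x_2))=(g(x_1),g(x_2))$, and since $g(0)=0$ lies in this set we get $g(x_1)<0<g(x_2)$, so $\langle x_1,x_2 \rangle$ is a positive radial departure of $g$. Finally $f((g(x_1),g(x_2)))=f(g((x_1,x_2)))=(f\circ g)((x_1,x_2))=((f\circ g)(x_1),(f\circ g)(x_2))=(f(g(x_1)),f(g(x_2)))$, which says $\langle g(x_1),g(x_2) \rangle$ is a positive radial departure of $f$; this is alternative~(1). If instead $g(x_2)<g(x_1)$, the same reasoning gives $g((x_1,x_2))=(g(x_2),g(x_1))$ with $g(x_2)<0<g(x_1)$, i.e.\ $\langle x_1,x_2 \rangle$ is a negative radial departure of $g$, and composing the interval identities as before yields $f((g(x_2),g(x_1)))=(f(g(x_1)),f(g(x_2)))$, i.e.\ $\langle g(x_2),g(x_1) \rangle$ is a negative radial departure of $f$; this is alternative~(2).

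The step I expect to be the main obstacle is the ``no overshoot'' claim $g((x_1,x_2))\subseteq(g(x_1),g(x_2))$ (and its mirror image when $g(x_2)<g(x_1)$) in the ($\Rightarrow$) direction: this is the one place where the hypothesis on $f\circ g$ is genuinely used, and it is exactly what forces $\langle x_1,x_2 \rangle$ to already be a radial departure of $g$, after which everything else is bookkeeping with orientations together with two compositions of interval identities.
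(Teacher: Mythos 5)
Your argument is correct: the reformulation of a positive radial departure as the pointwise condition $h(x_1)<h(t)<h(x_2)$ on $(x_1,x_2)$ is justified properly, the IVT ``no overshoot'' step in the ($\Rightarrow$) direction is exactly where the hypothesis on $f\circ g$ is needed and is carried out correctly, and the reduction of the negative case to the positive one via $-f$ is legitimate since $(-f)\circ g=-(f\circ g)$ and negating a map swaps the orientation of its radial departures. Note that the present paper only quotes this statement (it is Proposition~4.6 of \cite{ammerlaan-anusic-hoehn2023}) and gives no proof, so there is nothing to compare against here; your proof is the natural direct verification from the definition and can stand on its own.
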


\medskip
Next, we recall the radial contour factorization of a function $f \colon [-1,1] \to [-1,1]$ with $f(0) = 0$.  We begin with the ``one-sided'' contour factorization of a map $f \colon [0,1] \to [-1,1]$.

\begin{defn*}[Definition~3.1 of \cite{ammerlaan-anusic-hoehn2023}]
Let $f \colon [0,1] \to [-1,1]$ be a map with $f(0) = 0$.  A \emph{departure} of $f$ is a number $x > 0$ such that $f(x) \notin f([0,x))$.  We say a departure $x$ of $f$ is \emph{positively oriented} (or a \emph{positive departure}) if $f(x) > 0$, and $x$ is \emph{negatively oriented} (or a \emph{negative departure}) if $f(x) < 0$.

A \emph{contour point} of $f$ is a departure $\alpha$ such that for any departure $x$ of $f$ with $x > \alpha$, there exists a departure $y$ of $f$, of orientation opposite to that of $\alpha$, such that $\alpha < y \leq x$.
\end{defn*}

If $f$ is not constant, then it has departures, and therefore has at least one contour point.  For convenience we may also count $\alpha = 0$ as a contour point of $f$.

\begin{defn*}[Definition~3.2 of \cite{ammerlaan-anusic-hoehn2023}]
Let $f \colon [0,1] \to [-1,1]$ be a non-constant piecewise-linear map with $f(0) = 0$.  The \emph{contour factor} of $f$ is the piecewise-linear map $t_f \colon [0,1] \to [-1,1]$ defined as follows:

Let $0 = \alpha_0 < \alpha_1 < \alpha_2 < \cdots < \alpha_n$ be the contour points of $f$.  Then $t_f \left( \frac{i}{n} \right) = f(\alpha_i)$ for each $i = 0,\ldots,n$, and $t_f$ is defined to be linear in between these points.

A \emph{meandering factor} of $f$ is any (piecewise-linear) map $s \colon [0,1] \to [0,1]$ such that $s(0) = 0$ and $f = t_f \circ s$.
\end{defn*}

For the next two Definitions, let $\mathsf{r} \colon [0,1] \to [-1,0]$ be the function $\mathsf{r}(x) = -x$

\begin{defn*}[Definition~4.1 of \cite{ammerlaan-anusic-hoehn2023}]
Let $f \colon [-1,1] \to [-1,1]$ be a map with $f(0) = 0$.
\begin{itemize}
\item A \emph{right departure} of $f$ is a number $x > 0$ such that $x$ is a departure of $f {\restriction}_{[0,1]}$.
\item A \emph{left departure} of $f$ is a number $x < 0$ such that $-x$ is a departure of $f {\restriction}_{[-1,0]} \circ \mathsf{r}$; i.e.\ a number $x < 0$ such that $f(x) \notin f((x,0])$.
\end{itemize}
If $x$ is either a right departure or a left departure of $f$, then we say $x$ is \emph{positively oriented} if $f(x) > 0$, and $x$ is \emph{negatively oriented} if $f(x) < 0$.
\begin{itemize}
\item A \emph{right contour point} of $f$ is a number $\alpha > 0$ such that $\alpha$ is a contour point of $f {\restriction}_{[0,1]}$.
\item A \emph{left contour point} of $f$ is a number $\beta < 0$ such that $-\beta$ is a contour point of $f {\restriction}_{[-1,0]} \circ \mathsf{r}$.
\end{itemize}
\end{defn*}

In the remainder of this paper we consider maps $f \colon [-1,1] \to [-1,1]$ satisfying $f(0) = 0$, and for which $f {\restriction}_{[-1,0]}$ and $f {\restriction}_{[0,1]}$ are both non-constant (we will tacitly assume this latter condition throughout the remainder of this paper).

\begin{defn*}[Definition~6.1 of \cite{ammerlaan-anusic-hoehn2023}]
Let $f \colon [-1,1] \to [-1,1]$ be a piecewise-linear map with $f(0) = 0$.  The \emph{radial contour factor} of $f$ is the piecewise-linear map $t_f \colon [-1,1] \to [-1,1]$ such that:
\begin{enumerate}
\item $t_f {\restriction}_{[0,1]}$ is the contour factor of $f {\restriction}_{[0,1]}$, and;
\item $t_f {\restriction}_{[-1,0]} \circ \mathsf{r}$ is the contour factor of $f {\restriction}_{[-1,0]} \circ \mathsf{r}$.
\end{enumerate}
A \emph{radial meandering factor} of $f$ is a (piecewise-linear) map $s \colon [-1,1] \to [-1,1]$ which is \emph{sign-preserving} (i.e.\ $s(x) \geq 0$ for all $x \geq 0$ and $s(x) \leq 0$ for all $x \leq 0$) and such that $f = t_f \circ s$.
\end{defn*}

As pointed out in \cite{ammerlaan-anusic-hoehn2023}, there always exists at least one radial meandering factor of any given $f$, but it is not necessarily unique.  Much of the work in this paper is devoted to producing and selecting alternative maps $s \colon [-1,1] \to [-1,1]$, which are not necessarily sign-preserving, such that $s(0) = 0$ and $f = t_f \circ s$.

\begin{lemalph}[Lemma~6.2 of \cite{ammerlaan-anusic-hoehn2023}]
\label{lem:s match}
Let $f \colon [-1,1] \to [-1,1]$ be a piecewise-linear map with $f(0) = 0$, let $t_f$ be the radial contour factor of $f$, and let $\langle x_1,x_2 \rangle$ be a radial departure of $f$.  Then there exists a radial departure $\langle y_1,y_2 \rangle$ of $t_f$ such that for any map $s \colon [-1,1] \to [-1,1]$ with $s(0) = 0$ and $f = t_f \circ s$:
\begin{enumerate}
\item $\langle x_1,x_2 \rangle$ is a positive radial departure of $s$; and
\item $s(x_1) = y_1$ and $s(x_2) = y_2$.
\end{enumerate}
\end{lemalph}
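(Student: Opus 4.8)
The plan is to pin down the pair $\langle y_1,y_2\rangle$ intrinsically from $t_f$, to show that every map $s$ with $s(0)=0$ and $f=t_f\circ s$ is forced onto it, and then to read off (1), (2) and the fact that $\langle y_1,y_2\rangle$ is a radial departure of $t_f$ from Proposition~\ref{prop:comp dep}. By replacing $f$ with $-f$ if necessary --- which replaces $t_f$ with $-t_f$ (since $f$ and $-f$ have the same contour points, with opposite orientations) and toggles the orientation of every radial departure, without altering the content of the statement --- I may assume $\langle x_1,x_2\rangle$ is a positive radial departure of $f$, so that $f(x_1)<0<f(x_2)$ and $f((x_1,x_2))=(f(x_1),f(x_2))$.

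Let $C=(a,b)$ be the connected component of $0$ in the open set $t_f^{-1}\!\big((f(x_1),f(x_2))\big)$; since $t_f(0)=0\in(f(x_1),f(x_2))$ we have $-1\le a<0<b\le 1$, and since $a$ and $b$ are boundary points of $C$ while $t_f(C)\subseteq(f(x_1),f(x_2))$, each of $t_f(a),t_f(b)$ lies in $\{f(x_1),f(x_2)\}$. The heart of the argument will be the claim that $t_f(a)=f(x_1)$ and $t_f(b)=f(x_2)$. Granting this and setting $y_1=a$, $y_2=b$, the intermediate value theorem on $[a,0]$ and $[0,b]$ gives $t_f(C)=(f(x_1),f(x_2))$, so $\langle y_1,y_2\rangle$ is a positive radial departure of $t_f$; note it does not depend on $s$.

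Now fix any $s$ with $s(0)=0$ and $f=t_f\circ s$. Then $s((x_1,x_2))$ is connected, contains $s(0)=0$, and is carried by $t_f$ onto $f((x_1,x_2))=(f(x_1),f(x_2))$, hence $s((x_1,x_2))\subseteq C$; by continuity $s(x_1),s(x_2)\in\overline{s((x_1,x_2))}\subseteq[a,b]$, whereas $t_f(s(x_i))=f(x_i)\notin(f(x_1),f(x_2))$ forces $s(x_i)\notin C$. Thus $s(x_1),s(x_2)\in\{a,b\}$, and they are distinct because $f(x_1)\ne f(x_2)$, so $\{s(x_1),s(x_2)\}=\{a,b\}$ and $s((x_1,x_2))=(a,b)$. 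If it were the case that $s(x_1)=b$ and $s(x_2)=a$, then $s((x_1,x_2))=(s(x_2),s(x_1))$, i.e.\ $\langle x_1,x_2\rangle$ would be a \emph{negative} radial departure of $s$; Proposition~\ref{prop:comp dep} applied to $f=t_f\circ s$ (with $t_f$ and $s$ in the roles of $f$ and $g$ there), together with $\langle x_1,x_2\rangle$ being a positive radial departure of $f$, would then force $\langle s(x_2),s(x_1)\rangle=\langle a,b\rangle$ to be a negative radial departure of $t_f$, so that $0=t_f(0)\in t_f((a,b))=(t_f(b),t_f(a))$ and hence $t_f(a)>0$, i.e.\ $t_f(a)=f(x_2)$, against the claim. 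Therefore $s(x_1)=a=y_1$ and $s(x_2)=b=y_2$, which is (2); and then $s((x_1,x_2))=(s(x_1),s(x_2))$, so $\langle x_1,x_2\rangle$ is a positive radial departure of $s$, which is (1). (Conversely, (1) together with Proposition~\ref{prop:comp dep} re-derives that $\langle y_1,y_2\rangle$ is a radial departure of $t_f$.)

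What remains, and what I expect to be the main obstacle, is the claim that $t_f(a)=f(x_1)$ --- equivalently, that as one moves from $0$ toward $-1$, $t_f$ leaves $(f(x_1),f(x_2))$ through its lower endpoint rather than its upper one. Here one genuinely uses that $\langle x_1,x_2\rangle$ is a radial departure of $f$, not merely of $t_f$: on $[x_1,0]$ the map $f$ takes values in $[f(x_1),M]$ with $M:=\max f([x_1,0])<f(x_2)$ and attains $f(x_1)$ only at $x_1$, so for every admissible $s$ the set $s([x_1,0])$ --- connected and containing $0$ --- lies in the component $D$ of $0$ in the closed set $t_f^{-1}([f(x_1),M])$, and $D$ is disjoint from $t_f^{-1}(f(x_2))$; the symmetric statement holds on $[0,x_2]$. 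Combining these confinements with $\{s(x_1),s(x_2)\}=\{a,b\}$ should exclude $s(x_1)=b$, hence $t_f(a)\ne f(x_2)$. The delicate point is that $s$ is not assumed to be sign-preserving, so one cannot simply restrict to one side of $0$ and quote the one-sided contour theory of \cite{ammerlaan-anusic-hoehn2023}; it is the confinement of $s$ to $D$ and to its analogue on $[0,x_2]$ that localizes $s$ enough for the placement and orientations of the contour vertices of $t_f$ (recalled from \cite{ammerlaan-anusic-hoehn2023}) to complete the proof.
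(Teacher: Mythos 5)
Your overall architecture is sound, and since this paper only quotes Lemma~\ref{lem:s match} from \cite{ammerlaan-anusic-hoehn2023} (it is not reproved here), I can only judge the attempt on its own terms: the reduction to a positive radial departure of $f$, the definition of $C=(a,b)$ as the component of $0$ in $t_f^{-1}\bigl((f(x_1),f(x_2))\bigr)$, and the deduction that any admissible $s$ sends $(x_1,x_2)$ into $C$ and $\{s(x_1),s(x_2)\}$ onto $\{a,b\}$ are all fine. But there is a genuine gap exactly where you flag it: the crucial claim $t_f(a)=f(x_1)$, $t_f(b)=f(x_2)$ is never proved; your last paragraph only asserts that the confinement of $s([x_1,0])$ in the component $D$ of $0$ in $t_f^{-1}([f(x_1),M])$ ``should exclude'' $s(x_1)=b$. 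As sketched, it does not: that confinement rules out $s(x_1)=b$ only if $b\notin D$, which you cannot know without already knowing $t_f(b)=f(x_2)$ --- nothing you prove excludes a priori the degenerate possibility $t_f(a)=t_f(b)=f(x_1)$ (or $=f(x_2)$), in which case the claimed radial departure $\langle a,b\rangle$ of $t_f$ would not exist. There is also a smaller slip earlier: if $C$ reached an endpoint $\pm1$ of $[-1,1]$, the boundary argument giving $t_f(a),t_f(b)\in\{f(x_1),f(x_2)\}$ would fail; one must first note that $f(x_1)\in t_f([-1,0])$ and $f(x_2)\in t_f([0,1])$, so $C$ cannot contain $[-1,0]$ or $[0,1]$.

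Both issues are repaired by an observation that makes your worry about non-sign-preserving $s$ a red herring: the claim concerns only $t_f$, $a$ and $b$, so it suffices to test it against a single sign-preserving radial meandering factor $s_0$ of $f$, which always exists (as recalled in Section~\ref{sec:prelim}). Your own argument applied to $s_0$ gives $s_0((x_1,x_2))\subseteq C$ and $s_0(x_1),s_0(x_2)\in\{a,b\}$, while sign-preservation gives $s_0(x_1)\le 0<b$ and $s_0(x_2)\ge 0>a$; hence $s_0(x_1)=a$, $s_0(x_2)=b$, and therefore $t_f(a)=f(x_1)$, $t_f(b)=f(x_2)$ (this also settles the endpoint issue, since $f(x_1)=t_f(s_0(x_1))$ with $s_0(x_1)\in[-1,0]$). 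With the claim in hand, the bad case for an arbitrary $s$ dies instantly --- $s(x_1)=b$ would force $f(x_1)=t_f(b)=f(x_2)$ --- so you do not even need Proposition~\ref{prop:comp dep} there. In short: right skeleton, but as submitted the self-identified crux is left as a conjecture, and the proposed route via ``placement and orientations of the contour vertices of $t_f$'' is not developed; the one-line fix is to invoke the existence of a sign-preserving radial meandering factor.
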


\begin{propalph}[see Corollary~6.5 and Proposition~6.6(2) of \cite{ammerlaan-anusic-hoehn2023}]
\label{prop:comp same dep}
Let $f_1,f_2 \colon [-1,1] \to [-1,1]$ be maps with $f_1(0) = f_2(0) = 0$, and suppose $f_1$ and $f_1 \circ f_2$ have the same radial contour factors.  Then for any negative radial departure $\langle x_1,x_2 \rangle$ of $f_2$, $\langle f_2(x_2),f_2(x_1) \rangle$ is not a radial departure of $f_1$.  In fact, if $\langle y_1,y_2 \rangle$ is any radial departure of $f_1$ then either
\[ f_2(x_2) < y_1 < 0 < y_2 < f_2(x_1) \quad \textrm{or} \quad y_1 < f_2(x_2) < 0 < f_2(x_1) < y_2 .\]
\end{propalph}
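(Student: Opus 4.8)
The plan is to fix the common radial contour factor $t$ of $f_1$ and $f_1 \circ f_2$, together with a sign-preserving radial meandering factor $s$ of $f_1$ (so $f_1 = t \circ s$, $s(0) = 0$, and $s$ is sign-preserving), and to exploit the composite $\sigma := s \circ f_2$. Since $\sigma(0) = 0$ and $t \circ \sigma = f_1 \circ f_2$, the map $\sigma$ is one of the (generally not sign-preserving) factors of $f_1 \circ f_2$ through its radial contour factor to which Lemma~\ref{lem:s match} applies. The single mechanism behind the whole argument is this: \emph{$f_1 \circ f_2$ has no radial departure $\langle a_1,a_2\rangle$ with $f_2(a_1) > 0$.} For if it did, then, applying Lemma~\ref{lem:s match} to $f_1 \circ f_2 = t \circ \sigma$, the pair $\langle a_1,a_2\rangle$ would be a \emph{positive} radial departure of $\sigma$, and hence $\sigma(a_1) < 0$; but $f_2(a_1) > 0$ and the sign-preservation of $s$ force $\sigma(a_1) = s(f_2(a_1)) \ge 0$, a contradiction.

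The first assertion is now immediate: if $\langle f_2(x_2),f_2(x_1)\rangle$ were a radial departure of $f_1$, then, since $\langle x_1,x_2\rangle$ is a negative radial departure of $f_2$ (so $f_2(x_2) < 0 < f_2(x_1)$), Proposition~\ref{prop:comp dep} would make $\langle x_1,x_2\rangle$ a radial departure of $f_1 \circ f_2$, and it has $f_2(x_1) > 0$ --- impossible. Moreover, the ``in fact'' statement subsumes the first assertion (apply it with $\langle y_1,y_2\rangle = \langle f_2(x_2),f_2(x_1)\rangle$; an interval is never \emph{strictly} nested inside itself), so it remains to prove the nesting. So let $\langle y_1,y_2\rangle$ be a radial departure of $f_1$, and suppose for contradiction that $(y_1,y_2)$ and $(f_2(x_2),f_2(x_1))$ are not strictly nested, one inside the other. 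The guiding idea is to pull the pair $\langle y_1,y_2\rangle$ back through $f_2$ within the interval $(x_1,x_2)$: because $f_2$ maps $(x_1,x_2)$ onto $(f_2(x_2),f_2(x_1))$, one expects to find $x_1' \in [x_1,0)$ and $x_2' \in (0,x_2]$ --- for instance $x_2' = \min\{x \in (0,x_2) : f_2(x) = y_1\}$ and $x_1' = \max\{x \in (x_1,0) : f_2(x) = y_2\}$ --- with $\langle x_1',x_2'\rangle$ again a negative radial departure of $f_2$ satisfying $\langle f_2(x_2'),f_2(x_1')\rangle = \langle y_1,y_2\rangle$. Granting this, Proposition~\ref{prop:comp dep} makes $\langle x_1',x_2'\rangle$ a radial departure of $f_1 \circ f_2$ with $f_2(x_1') = y_2 > 0$, and the mechanism of the first paragraph produces the contradiction that rules out the non-nested configuration.

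The main obstacle is justifying that pull-back in every configuration, and this is precisely where the finer structure of radial contour factorizations developed in \cite{ammerlaan-anusic-hoehn2023} is needed. Two difficulties arise. First, between the chosen first-return points $f_2$ may overshoot, taking values outside $(y_1,y_2)$, so that the truncated pair fails to be a radial departure of $f_2$ at all. Second, in a genuine crossing such as $f_2(x_2) < y_1 < 0 < f_2(x_1) < y_2$, one of $y_1,y_2$ lies strictly outside $f_2((x_1,x_2)) = (f_2(x_2),f_2(x_1))$ and hence is never realized by $f_2$, so there is no point to pull back on that side. Handling both requires tracking, via $t = t_{f_1} = t_{f_1 \circ f_2}$ and the factor $\sigma$, how the zigzag of $f_1 \circ f_2$ on $(x_1,x_2)$ compares with that of $f_1$: a sign-reversing negative radial departure of $f_2$ that ``crosses'' $\langle y_1,y_2\rangle$ must endow $f_1 \circ f_2$ with a radial departure on which $\sigma$ violates the sign-preservation of $s$, contradicting the equality $t_{f_1 \circ f_2} = t_{f_1}$. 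This is the content of Corollary~6.5 and Proposition~6.6(2) of \cite{ammerlaan-anusic-hoehn2023}, which one may either reprove or invoke directly.
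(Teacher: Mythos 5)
Your first paragraph is sound, and it is a genuinely nice argument: since $t := t_{f_1} = t_{f_1\circ f_2}$ and $\sigma := s\circ f_2$ (with $s$ a sign-preserving radial meandering factor of $f_1$) satisfies $\sigma(0)=0$ and $t\circ\sigma = f_1\circ f_2$, Lemma~\ref{lem:s match} applied to $f_1\circ f_2$ forces $\sigma(a_1)<0<\sigma(a_2)$ for any radial departure $\langle a_1,a_2\rangle$ of $f_1\circ f_2$, so sign-preservation of $s$ rules out $f_2(a_1)\geq 0$. Combined with Proposition~\ref{prop:comp dep}, this does prove the first assertion (that $\langle f_2(x_2),f_2(x_1)\rangle$ is not a radial departure of $f_1$) cleanly and without circularity.

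However, the ``in fact'' nesting statement --- which is the part this paper actually uses, e.g.\ to establish property \ref{properly nested} in the proof of Lemma~\ref{lem:bridged s} --- is not proved. Your pull-back construction only makes sense when both $y_1$ and $y_2$ are attained by $f_2$ on $(x_1,x_2)=(f_2(x_2),f_2(x_1))^{-1}$-side, and you yourself identify the two fatal configurations: overshooting between the chosen first-return points (so the truncated pair need not be a radial departure of $f_2$ at all), and genuine crossings such as $y_1 < f_2(x_2) < 0 < y_2 < f_2(x_1)$, where $y_1\notin f_2((x_1,x_2))$ and there is simply nothing to pull back; the boundary cases $y_1 = f_2(x_2)$ or $y_2 = f_2(x_1)$, which the strict inequalities of the conclusion must also exclude, are likewise untouched. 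Your proposed remedy is to ``invoke directly'' Corollary~6.5 and Proposition~6.6(2) of \cite{ammerlaan-anusic-hoehn2023}, but those two results \emph{are} the statement of Proposition~\ref{prop:comp same dep} (the present paper states it precisely as a citation of them and gives no internal proof), so this fallback is circular and the attempt does not constitute a proof of the full proposition. To close the gap you would need the finer analysis of contour points carried out in the cited paper --- roughly, showing that a crossing configuration forces the radial contour factor of $f_1\circ f_2$ to acquire a departure that $t_{f_1}$ does not have, contradicting $t_{f_1}=t_{f_1\circ f_2}$ --- rather than a pull-back of the departure $\langle y_1,y_2\rangle$ through $f_2$.
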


\section{Bridging lemmas}
\label{sec:bridging}

Throughout this section, let $f,s \colon [-1,1] \to [-1,1]$ be piecewise-linear maps with $f(0) = s(0) = 0$, let $t = t_f$ be the radial contour factor of $f$, and suppose $f = t \circ s$.  We remark that $s$ is not assumed to be sign-preserving, so is not necessarily a radial meandering factor.  The main results of this section are the ``Bridging Lemmas'' (Lemmas~\ref{lem:bridging I} and \ref{lem:bridging II}), which enable us to change the map $s$ on certain intervals while preserving the properties $s(0) = 0$ and $f = t \circ s$.

In the following Definition, the name ``liftable range'' alludes to the property established in Lemma~\ref{lem:stay right} below, which states roughly that when $s$ maps an interval to a liftable range, then we may modify $s$ on that interval so that it is non-negative throughout the interval.

\begin{defn}
\label{defn:liftable}
A \emph{liftable range} is an interval $[y^-,y^+]$ with $0 \in [y^-,y^+] \subseteq [-1,1]$, and such that:
\begin{enumerate}
\item $t([0,1]) \supseteq t([y^-,0])$; and
\item There does not exist a radial departure $\langle y_1,y_2 \rangle$ of $t$ such that $y^- \leq y_1$ and $y^+ < y_2$.
\end{enumerate}
\end{defn}

\begin{lem}
\label{lem:liftable condition}
Let $y^-,y^+ \in [-1,1]$ with $y^- < 0 \leq y^+$.  If there exist $0 \leq x < x' \leq 1$ such that $x$ is a positive right departure of $s$ (or $x = 0$) with $s(x) = y^+$, and $s([x,x']) = [y^-,y^+]$, then $[y^-,y^+]$ is a liftable range.
\end{lem}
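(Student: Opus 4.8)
The plan is to verify the two defining conditions of a liftable range for the interval $[y^-,y^+]$ directly from the hypotheses, using the relation $f = t \circ s$ together with the basic structure theory of departures and contour points recalled in Section~\ref{sec:rad deps}.

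First I would establish condition (1), namely $t([0,1]) \supseteq t([y^-,0])$. Since $s([x,x']) = [y^-,y^+]$ and $[x,x'] \subseteq [0,1]$, we have $t([y^-,y^+]) = t(s([x,x'])) = f([x,x']) \subseteq f([0,1]) = t(s([0,1])) \subseteq t([0,1])$ (using that $s$ is sign-preserving is \emph{not} available here, but $s([0,1]) \subseteq [-1,1]$ suffices together with $f([0,1]) = t(s([0,1]))$; more carefully, since $x$ is either $0$ or a right departure of $s$ and $s([x,x']) = [y^-,y^+]$ with $0 \in [y^-,y^+]$, the point $y^-$ is in $s([0,x'])$, so $t(y^-) \in f([0,x']) \subseteq f([0,1]) \subseteq t([-1,1])$). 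I would argue that in fact $t([y^-,0]) \subseteq f([0,x']) \subseteq t([0,1])$: for any $y \in [y^-,0]$, since $s(x) = y^+ \geq 0$ and $s$ takes the value $y^- \leq y \leq 0 \leq y^+$ somewhere on $[x,x']$, the intermediate value theorem gives a point in $[x,x']$ where $s$ equals $y$, hence $t(y) \in f([x,x']) \subseteq f([0,1])$. Finally $f([0,1]) = t(s([0,1]))$, and I must promote this to a subset of $t([0,1])$; this is where I would invoke that $s(0) = 0$ and that the sign-preserving radial meandering factor governs $t$'s range on $[0,1]$, or alternatively argue that $t([0,1]) = f_{\text{something}}$—more precisely, $t([0,1])$ is the contour factor's range, which equals $f([0,1])$ by construction of the contour factor (the contour points $\alpha_i$ have $t_f(i/n) = f(\alpha_i)$ and the values $f(\alpha_i)$ include the extremes of $f$ on $[0,1]$). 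So $t([0,1]) = f([0,1]) \supseteq t([y^-,0])$, giving condition (1).

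Next, condition (2): there is no radial departure $\langle y_1, y_2\rangle$ of $t$ with $y^- \leq y_1$ and $y^+ < y_2$. Suppose such a $\langle y_1,y_2\rangle$ exists. The key idea is to pull this back through $s$ on the interval $[x,x']$ to manufacture a departure of $s$, and hence (via Proposition~\ref{prop:comp dep} applied to $f = t\circ s$, or directly) a radial departure of $f$ lying over $\langle y_1,y_2\rangle$, which will contradict either the minimality built into the contour factor or the hypothesis that $x$ is a right \emph{contour-respecting} departure with $s(x) = y^+$. Concretely: since $s(x) = y^+ < y_2 \leq 1$ and $s$ attains $y^- \leq y_1 < 0$ at some point of $[x,x']$, and since $y_1, y_2 \in [y^-, y^+] = s([x,x'])$, I can find the last point $x_2 \in [x,x']$ before which $s$ stays in $[y_1, s(x)]$... — the honest approach is to consider $x_1 = \min\{t \in [x,x'] : s(t) = y_1\}$ and analyze $s$ on $[x, x_1]$; because $x$ is a positive right departure of $s$ with $s(x) = y^+ \geq y_2 > 0 > y_1$, and $\langle y_1,y_2\rangle$ is a radial departure of $t$, Lemma~\ref{lem:s match}-type reasoning in reverse shows $t(y_1) \notin t((y_1,0]) \cup$ (appropriate set), forcing $f$ to have a departure at a point of $[x,x_1]$ mapping to $t(y_1)$, which lies beyond the right contour point $x$ in a way incompatible with $x$ being the \emph{last} contour point reached, or with $s(x) = y^+$ being the contour value. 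This contradiction establishes (2).

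The main obstacle I anticipate is condition (2): turning a hypothetical radial departure of $t$ over-reaching $[y^-, y^+]$ into a genuine contradiction requires carefully tracking orientations and using that $x$ is specifically a \emph{positive right departure} (or $0$) with $s(x) = y^+$, not merely that $s([x,x']) = [y^-,y^+]$. I expect to need the precise characterization in Proposition~\ref{prop:comp dep} to match a radial departure of $f = t \circ s$ with compatible departures of $s$ and $t$, combined with the observation that a right departure $x$ of $s$ with $s(x) = y^+$ being positive means any radial departure of $t$ with second coordinate exceeding $y^+$ would force, upon composition, a radial departure of $f$ whose positive side extends past the contour point corresponding to $x$ — contradicting that $t$ already records all contour behavior of $f$ (Proposition~\ref{prop:comp same dep} or the defining minimality of contour points). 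Handling the edge case $x = 0$ separately (where "departure" degenerates) and the case $y^+ = 0$ will require minor additional care but no new ideas.
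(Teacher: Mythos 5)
Your handling of condition (1) of Definition~\ref{defn:liftable} is essentially fine: either of the two routes you hedge between can be closed. The cleaner one (and the paper's choice) is to take a sign-preserving radial meandering factor $s_0$ of $f$, so that $t \circ s_0 = f$ and $s_0([x,x']) \subseteq [0,1]$, whence $t([y^-,0]) \subseteq t([y^-,y^+]) = t(s([x,x'])) = f([x,x']) = t(s_0([x,x'])) \subseteq t([0,1])$. Your alternative, $t([0,1]) = f([0,1])$, is also true, but it needs the small verification that the extreme values of $f {\restriction}_{[0,1]}$ are attained at contour points; as written it is asserted rather than proved.

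The genuine gap is condition (2). Your sketch never arrives at a contradiction, and its intermediate assertions conflict with the standing hypothesis $y^+ < y_2$: you write $y_1,y_2 \in [y^-,y^+] = s([x,x'])$ and later $s(x) = y^+ \geq y_2$, both false precisely because $y_2$ exceeds $y^+$. You also lean on $x$ being ``the last contour point reached'' and on Proposition~\ref{prop:comp same dep}, but $x$ is only assumed to be a positive right departure of $s$ (or $0$), not a contour point of anything, and Proposition~\ref{prop:comp same dep} requires that $f_1$ and $f_1 \circ f_2$ share a radial contour factor, a hypothesis not available in this lemma. The missing mechanism is to apply Lemma~\ref{lem:s match} to the hypothetical radial departure $\langle y_1,y_2 \rangle$ of $t$ to obtain a \emph{positive} radial departure $\langle x_1,x_2 \rangle$ of $s$ with $s(x_1) = y_1$ and $s(x_2) = y_2$. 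Then, because $x$ is a positive right departure of $s$ (or $x=0$) with $s(x) = y^+$, one has $s(z) < y^+ < y_2$ for $z \in [0,x)$, and $s(z) \leq y^+ < y_2$ for $z \in [x,x']$; hence $x_2 > x'$ and $[x,x'] \subset (x_1,x_2)$. But then $y^- \in s([x,x']) \subset s((x_1,x_2)) = (y_1,y_2)$, forcing $y^- > y_1$ and contradicting $y^- \leq y_1$. Without this (or an equivalent) way of converting the over-reaching radial departure of $t$ into a concrete contradiction, your outline for condition (2) does not constitute a proof.
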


\begin{proof}
To see that $t([0,1]) \supseteq t([y^-,0])$, let $s_0$ be a radial meandering factor of $f$, so that $t \circ s_0 = f$ and $s_0([0,1]) = [0,1]$, hence $s_0([x,x']) \subseteq [0,1]$.  Then $t([0,1]) \supseteq t \circ s_0([x,x']) = t \circ s([x,x']) = t([y^-,y^+]) \supseteq t([y^-,0])$.

For the second property, suppose for a contradiction that $\langle y_1,y_2 \rangle$ is a radial departure of $t$ with $y^- \leq y_1$ and $y^+ < y_2$.  By Lemma~\ref{lem:s match}, there is a positive radial departure $\langle x_1,x_2 \rangle$ of $s$ such that $s(x_1) = y_1$ and $s(x_2) = y_2$.  Since $y^+ < y_2$, we have $s(z) < y_2$ for all $z \in [x,x']$, and since $x$ is a right departure of $s$ (or $x=0$) with $s(x) = y^+$, we have $s(z) < y_2$ for all $z \in [0,x]$.  Thus $s(z) < y_2$ for all $z \in [0,x']$, and hence $x_2 > x'$, meaning $[x,x'] \subset (x_1,x_2)$.  But now $y^- \in s([x,x']) \subset s((x_1,x_2))$, while $y_1 = s(x_1) \notin s((x_1,x_2))$, a contradiction since $y^- \leq y_1 < 0$.
\end{proof}

The following simple observation is an immediate consequence of the definition of a liftable range.

\begin{lem}
\label{lem:liftable comparison}
Let $[y^-,y^+]$ be a liftable range.  If $y^- \leq w^- < 0 \leq y^+ \leq w^+ \leq 1$, then $[w^-,w^+]$ is a liftable range as well.
\end{lem}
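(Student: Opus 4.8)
The plan is to verify directly the two defining conditions of Definition~\ref{defn:liftable} for the interval $[w^-,w^+]$, exploiting the fact that passing from $[y^-,y^+]$ to $[w^-,w^+]$ only enlarges the interval on the negative side (replacing $y^-$ by the smaller-or-equal $w^-$) and only weakens the role of the positive endpoint (replacing $y^+$ by the larger-or-equal $w^+$). First I would record that $0 \in [w^-,w^+] \subseteq [-1,1]$ is immediate from the chain $-1 \leq y^- \leq w^- < 0 \leq y^+ \leq w^+ \leq 1$.

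For condition (1), since $y^- \leq w^- < 0$ we have $[w^-,0] \subseteq [y^-,0]$, so $t([w^-,0]) \subseteq t([y^-,0]) \subseteq t([0,1])$, the last inclusion being condition (1) for the liftable range $[y^-,y^+]$. For condition (2), I would argue by contradiction: suppose $\langle y_1,y_2 \rangle$ is a radial departure of $t$ with $w^- \leq y_1$ and $w^+ < y_2$. Then $y^- \leq w^- \leq y_1$ and $y^+ \leq w^+ < y_2$, so $\langle y_1,y_2 \rangle$ would be a radial departure of $t$ witnessing the failure of condition (2) for $[y^-,y^+]$, contrary to hypothesis. Hence no such departure exists and $[w^-,w^+]$ is a liftable range.

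Since each step is a one-line inclusion or inequality, there is no substantive obstacle here; the entire content is unwinding Definition~\ref{defn:liftable} and observing that both conditions are monotone in the relevant direction under the assumed inequalities.
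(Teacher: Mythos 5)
Your proof is correct and matches the paper's treatment: the paper states this lemma without proof as an immediate consequence of Definition~\ref{defn:liftable}, and your two-line verification (monotonicity of condition (1) under shrinking $[y^-,0]$ to $[w^-,0]$, and of condition (2) under weakening the endpoint constraints) is exactly the intended argument.
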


In the ``Bridging Lemmas'' below, we will make use of the following function $L$.

\begin{defn}
\label{defn:L}
Given $y < 0$ such that $t([0,1]) \supseteq t([y,0])$, define $L(y)$ to be the minimal value such that $L(y) > 0$ and $t([0,L(y)]) \supseteq t([y,0])$.
\end{defn}

Observe that $L(y)$ is a right departure of $t$.  This function $L$ is monotone: if $y \leq y' < 0$, then $L(y') \leq L(y)$.  $L$ is not continuous in general.

When we say $I$ is a closed interval with endpoints $a$ and $b$, we mean either $I = [a,b]$ (if $a < b$) or $I = [b,a]$ (if $b < a$).

\begin{lem}
\label{lem:stay right}
Let $[y^-,y^+]$ be a liftable range, and let $I \subseteq [-1,1]$ be a closed interval with endpoints $a$ and $b$, such that $s(I) = [y^-,y^+]$ and $s(a) = y^+$.  Then there exists a map $\hat{s} \colon I \to [0,1]$ such that:
\begin{enumerate}[label=(\arabic{*})]
\item $t \circ \hat{s}(x) = f(x)$ for all $x \in I$;
\item $\hat{s}(a) = y^+$;
\item $\hat{s}(x) \geq s(x)$ for all $x \in I$;
\item $\max \{\hat{s}(x): x \in I$ and $\hat{s}(x) \neq s(x)\} = L(y^-)$; and
\item If $s(b) \leq L(y^-)$, then $\hat{s}(b)$ is the largest value $y \in [0,L(y^-)]$ such that $t(y) = f(b)$.
\end{enumerate}
\end{lem}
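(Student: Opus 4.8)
The plan is to build $\hat{s}$ by keeping $\hat{s}=s$ wherever that is safe, and on the remaining part of $I$ replacing the graph of $s$ by a continuous path which lies in $[0,L(y^-)]$, has the same composition with $t$, and is pushed up as far as possible. I would assume $I=[a,b]$ with $a<b$, the case $a>b$ following by applying the reflection $x\mapsto-x$ to the domain; also $y^-<0$, as otherwise there is nothing to prove. I would first record the local shape of $t$ at $L(y^-)$: by Definition~\ref{defn:L}, $L(y^-)$ is a right departure of $t$ and is the least value with $t([0,L(y^-)])\supseteq t([y^-,0])$. Writing $t([y^-,0])=[\mu,M]$ (an interval containing $0$), the sets $t([0,x])$ grow continuously as $x$ increases from $0$, so by minimality $t(L(y^-))\in\{\mu,M\}$ and the value $t(L(y^-))$ is not attained by $t$ on $[0,L(y^-))$, while every other value of $[\mu,M]$ is. Finally, for any subinterval $J\subseteq I$ on which $s$ takes values in $[y^-,0]$, one has $\min s(J)=:y_J\ge y^-$, hence by monotonicity of $L$ also $L(y_J)\le L(y^-)$ and $t([0,L(y^-)])\supseteq t([y_J,0])$.

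For the construction, let $E=[a,b]\setminus\overline{\{x\in[a,b]:s(x)>L(y^-)\}}$, a finite union of intervals since $s$ is piecewise-linear, and set $\hat{s}=s$ on $[a,b]\setminus E$; this is harmless, as $s\ge L(y^-)\ge 0$ there, and it already gives $\hat{s}(a)=s(a)=y^+$ in the case $y^+>L(y^-)$. On each component $C$ of $E$, the endpoints of $C$ are: a point where $s=L(y^-)$ (when the endpoint lies in the interior of $I$); the point $a$, where $s=y^+$ (when $a\in E$); or the free endpoint $b$ (when $b\in E$, which happens exactly when $s(b)\le L(y^-)$). I would then define $\hat{s}{\restriction}_C$ to be a continuous map $C\to[0,L(y^-)]$ satisfying $t\circ\hat{s}=f$ on $C$, equal to these prescribed endpoint values (in particular pinned to $y^+$ at $a$), and chosen as large as possible at every point; equivalently, $\hat{s}{\restriction}_C$ is obtained from $s{\restriction}_C$ by folding the below-zero arcs of its graph up into $[0,L(y^-)]$ and then raising the whole graph as far as possible. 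The maximality is what yields property~(5): evaluating at $b$ (when $s(b)\le L(y^-)$) gives $\hat{s}(b)=\max\{y\in[0,L(y^-)]:t(y)=f(b)\}$, exactly as required.

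The crux is to show this fold exists on every component $C$, i.e.\ that one is never forced to let $\hat{s}$ jump or to push $\hat{s}$ above $L(y^-)$ in order to keep $t\circ\hat{s}$ equal to $f$. As $s{\restriction}_C$ is itself a continuous lift of $f{\restriction}_C$, one only needs to slide this lift into $[0,L(y^-)]$: when $f{\restriction}_C$ reverses direction at a value that $t$ has already passed, $\hat{s}$ may reverse within the current linear piece of $t$; and when $f{\restriction}_C$ reaches a new extreme, the minimality of $L(y^-)$ recorded above guarantees this extreme is attained within $[0,L(y^-)]$. If the procedure failed, then — using Lemma~\ref{lem:s match} to convert the obstructing radial departure of $s$ into one of $t$, and tracking orientations via Propositions~\ref{prop:alt dep nested} and~\ref{prop:comp dep} — one would obtain a radial departure $\langle y_1,y_2\rangle$ of $t$ with $y^-\le y_1$ and $y^+<y_2$, contradicting clause~(2) of Definition~\ref{defn:liftable}. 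I expect this translation of a ``stuck'' configuration into a forbidden radial departure of $t$ to be the main obstacle in the argument. For the equality in~(4), I would observe that $L(y^-)$ is attained by $\hat{s}$ at a point where $\hat{s}\ne s$: choosing $x_0$ with $s(x_0)=y^-$, the connected component of $\{x\in I:s(x)<0\}$ containing $x_0$ lies in $E$ and its image under $f$ equals $t([y^-,0])=[\mu,M]\ni t(L(y^-))$; at the point of that component where $f=t(L(y^-))$, the value $\hat{s}$ must equal $L(y^-)$ by the first paragraph, and there $s<0<L(y^-)$.

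Granting the fold, the five conclusions follow directly. (1) holds by construction. (2) holds because $\hat{s}(a)=y^+$ in every case. (3) holds because $\hat{s}=s$ off $E$, while on $E$ one has $s(x)<0\le\hat{s}(x)$ where $s(x)<0$, and where $s(x)\ge 0$ the value $s(x)\in[0,L(y^-)]$ satisfies $t(s(x))=f(x)$, so the maximal lift $\hat{s}(x)$ dominates it. (4) holds because $\hat{s}\le L(y^-)$ throughout $E$ and, as just shown, this bound is attained at a point of $E$ where $\hat{s}\ne s$. Finally, (5) is the observation at the end of the second paragraph.
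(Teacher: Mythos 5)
Your outer reduction (keep $\hat{s}=s$ wherever $s$ exceeds $L(y^-)$, and replace $s$ on the remaining intervals by a lift of $f$ through $t$ taking values in $[0,L(y^-)]$) matches the spirit of the paper's proof, but the step you defer is the entire content of the lemma, and it is not routine. You reduce everything to the assertion that on each component $C$ of $E$ there exists a continuous map $C\to[0,L(y^-)]$ with $t\circ\hat{s}=f$ and the prescribed endpoint values, and you say only that a failure ``would'' yield a forbidden radial departure of $t$, flagging this as the expected main obstacle. The local facts you record about $t$ near $L(y^-)$ (that $t(L(y^-))$ is a value not attained on $[0,L(y^-))$, while all other values of $t([y^-,0])$ are) show only that the required values lie in the image of $[0,L(y^-)]$; they do not let a lift move continuously from one linear branch of $t$ to the next. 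The lift can cross a right contour point $\gamma_i$ of $t$ only at a moment when $f$ takes the value $t(\gamma_i)$, and whether such moments occur, and in the right order, is governed by how $s$ descends below $0$ on $C$. This is precisely what the paper establishes in Claim~\ref{claim:twins}: it recursively produces left departures $\delta_{k+1}<\cdots<\delta_1<0$ of $t$ with $t(\delta_i)=t(\gamma_i)$ and $t([\delta_i,y^+])\subseteq t([\gamma_{i-1},\gamma_i])$, invoking clause (2) of Definition~\ref{defn:liftable} at each stage to exclude the bad configuration, and then defines $\hat{s}$ explicitly branch by branch, switching branches at the first points $x_i$ with $s(x_i)=\delta_i$ and using that $t$ is injective on each $[\gamma_{i-1},\gamma_i]$. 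Your proposed mechanism for the contradiction is also mis-aimed: a ``stuck'' lift does not hand you a radial departure of $s$ (or of $f$) to feed into Lemma~\ref{lem:s match}; the forbidden departure $\langle y_1,y_2\rangle$ of $t$ with $y^-\le y_1$ and $y^+<y_2$ has to be manufactured inside $t$ itself, by pairing a left departure of $t$ (coming from the values $t$ takes on $[y^-,0)$) with the right contour point $\gamma_i$ one is trying to cross, which is how the paper's argument actually runs.

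There are also secondary gaps riding on the same deferred step. The phrase ``chosen as large as possible at every point'' needs justification: you must check that the pointwise supremum of the family of admissible lifts is again an admissible lift (this can be done, e.g.\ because the maximum of two lifts is a lift and $t$ has finitely many nonconstant linear pieces, but you do not argue it), and in any case the family must first be shown nonempty. Your argument for (3) compares the maximal lift with $s{\restriction}_C$, which is not a member of the family being maximized (it takes negative values), so you would need the additional remark that $\max(h,s{\restriction}_C)$ is again admissible for $h$ admissible. Finally, (5) does not follow from maximality alone: you need some admissible lift whose value at $b$ is the largest preimage of $f(b)$ in $[0,L(y^-)]$, which is again the unproven existence statement; in the paper this falls out of the construction because $\hat{s}(b)$ lands in the top branch $[\gamma,L(y^-)]$, on which $t$ is injective and which covers $t([y^-,L(y^-)])$ by Claim~\ref{claim:gamma L covers}.
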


Note that $\hat{s}(b)$ is uniquely determined by conditions (4) and (5); namely, if $s(b) > L(y^-)$ then $\hat{s}(b) = s(b)$ according to condition (4), and if $s(b) \leq L(y^-)$ then $\hat{s}(b)$ is given by condition (5).  The value of $\hat{s}(b)$ depends only on $t$ and the values $y^-$ and $s(b)$.

See Figure~\ref{fig:liftable range} for an illustration of the scenario described in Lemma~\ref{lem:stay right}.

\begin{figure}
\begin{center}
\includegraphics{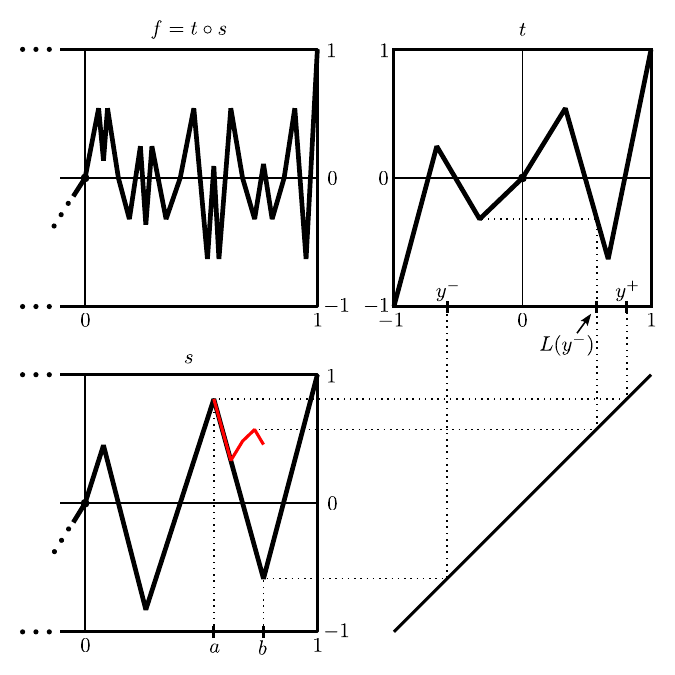}
\end{center}

\caption{An example of a liftable range $[y^-,y^+]$ and interval $I = [a,b]$ as described in Lemma~\ref{lem:stay right}, together with the map $\hat{s}$ (in red).}
\label{fig:liftable range}
\end{figure}

\begin{proof}
Let $\gamma$ be the largest right contour point of $t$ such that $\gamma < L(y^-)$ (or $\gamma = 0$ if there is no such right contour point).  Observe that $t$ is one-to-one on $[\gamma,L(y^-)]$.

\begin{claim}
\label{claim:gamma L covers}
$t([\gamma,L(y^-)]) = t([y^-,L(y^-)])$.
\end{claim}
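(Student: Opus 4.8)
The plan is to reduce the claimed equality to a single inclusion and then to derive a contradiction from the fact — already recorded above — that $L(y^-)$ is a right departure of $t$. Since $\gamma \ge 0 > y^-$, we have $[\gamma,L(y^-)] \subseteq [0,L(y^-)] \subseteq [y^-,L(y^-)]$, so one inclusion is automatic. For the other, the definition of $L(y^-)$ gives $t([0,L(y^-)]) \supseteq t([y^-,0])$, hence $t([y^-,L(y^-)]) = t([y^-,0]) \cup t([0,L(y^-)]) = t([0,L(y^-)])$; and since $t([0,L(y^-)]) = t([0,\gamma]) \cup t([\gamma,L(y^-)])$, it remains only to prove $t([0,\gamma]) \subseteq t([\gamma,L(y^-)])$. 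This is trivial when $\gamma = 0$ (then $t([0,\gamma]) = \{t(\gamma)\} = \{0\}$), so I would assume $\gamma > 0$, and by an entirely symmetric argument assume $\gamma$ is a \emph{positive} right contour point of $t$, so that $t(\gamma) > 0$.

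I would then record two structural facts. First, since $\gamma$ is a positive right departure of $t$ we have $t(\gamma) \notin t([0,\gamma))$, and a short intermediate-value argument using $t(0) = 0 < t(\gamma)$ upgrades this to $t(\gamma) = \max t([0,\gamma])$; writing $m := \min t([0,\gamma])$, this gives $t([0,\gamma]) = [m,t(\gamma)]$ with $[m,t(\gamma)) \subseteq t([0,\gamma))$. Second, $t$ is one-to-one (as already observed), hence strictly monotone, on $[\gamma,L(y^-)]$; and since $\gamma$ is a right \emph{contour} point, the next right departure of $t$ beyond $\gamma$ is negatively oriented, which — together with $L(y^-) > \gamma$ and $t(\gamma) = \max t([0,\gamma])$ — forces $t$ to be strictly \emph{decreasing} on $[\gamma,L(y^-)]$, so that $t(L(y^-)) < t(\gamma)$ and $t([\gamma,L(y^-)]) = [t(L(y^-)),t(\gamma)]$. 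With this in hand it suffices to show $t(L(y^-)) \le m$, since then $t([0,\gamma]) = [m,t(\gamma)] \subseteq [t(L(y^-)),t(\gamma)] = t([\gamma,L(y^-)])$. Suppose instead that $t(L(y^-)) > m$; combined with $t(L(y^-)) < t(\gamma)$ this puts $t(L(y^-)) \in [m,t(\gamma)) \subseteq t([0,\gamma)) \subseteq t([0,L(y^-)))$, contradicting that $L(y^-)$ is a right departure of $t$.

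I expect the only genuinely delicate point to be the second structural fact: that $t$ turns downward at the positive right contour point $\gamma$ and stays monotone on the whole of $[\gamma,L(y^-)]$, so that in particular $L(y^-)$ lies on a single descending leg of $t$. This is exactly where the definition of contour point from \cite{ammerlaan-anusic-hoehn2023} is used — specifically the clause requiring an opposite-orientation departure beyond $\gamma$: were $t$ increasing just past $\gamma$, each point slightly larger than $\gamma$ would be a positive right departure of $t$ with no negatively oriented right departure between it and $\gamma$, which is impossible. Everything else is routine set arithmetic, and the symmetric case of a negative right contour point $\gamma$ yields the mirror statement $t(L(y^-)) \ge \max t([0,\gamma])$ by the same reasoning.
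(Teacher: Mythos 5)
Your proof is correct and follows essentially the same route as the paper's: reduce to showing $t([0,\gamma]) \subseteq t([\gamma,L(y^-)])$, use the contour-point property of $\gamma$ together with monotonicity of $t$ on $[\gamma,L(y^-)]$ to see that $t$ turns downward (in the case $t(\gamma)>0$) past $\gamma$, and use the departure property of $L(y^-)$ to push $t(L(y^-))$ below $\min t([0,\gamma])$. The only cosmetic difference is that you work with $m=\min t([0,\gamma])$ and a direct contradiction, where the paper identifies this minimum as $t(\gamma')$ for the preceding right contour point $\gamma'$ and asserts $t(L(y^-))<t(\gamma')$ outright.
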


\begin{proof}[Proof of Claim~\ref{claim:gamma L covers}]
\renewcommand{\qedsymbol}{\textsquare (Claim~\ref{claim:gamma L covers})}
Clearly $t([\gamma,L(y^-)]) \subseteq t([y^-,L(y^-)])$ as $[\gamma,L(y^-)] \subseteq [y^-,L(y^-)]$.  Also, by definition of $L(y^-)$, $t([0,L(y^-)]) \supseteq t([y^-,0])$, meaning $t([0,L(y^-)]) \supseteq t([y^-,L(y^-)])$.  Therefore to prove the Claim, it suffices to prove that $t([\gamma,L(y^-)]) \supseteq t([0,\gamma])$.  Clearly we may assume $\gamma \neq 0$.

For simplicity, suppose that $t(\gamma) > 0$ (the case $t(\gamma) < 0$ is similar).  Let $\gamma'$ be the right contour point of $t$ immediately preceding $\gamma$ (or $\gamma' = 0$ if there is no such right contour point), so that $t(\gamma') \leq 0$ and $t([0,\gamma]) = [t(\gamma'),t(\gamma)]$.  Since $L(y^-)$ is a right departure of $t$ and there are no right contour points between $\gamma$ and $L(y^-)$, we must have that $L(y^-)$ is a negative right departure of $t$ and $t(L(y^-)) < t(\gamma')$, and so $t([\gamma,L(y^-)]) = [t(L(y^-)),t(\gamma)] \supseteq [t(\gamma'),t(\gamma)] = t([0,\gamma])$, as desired.
\end{proof}

We define $\hat{s}$ in two cases, depending on whether $y^+ \geq \gamma$ or $y^+ < \gamma$.

\medskip
\textbf{Case~1:} Suppose $y^+ \geq \gamma$.

Given $x \in I$, define $\hat{s}(x)$ as follows.  If $s(x) > L(y^-)$, then let $\hat{s}(x) = s(x)$.  If $s(x) \leq L(y^-)$, then let $\hat{s}(x)$ be the (unique) point in $[\gamma,L(y^-)]$ such that $t \circ \hat{s}(x) = t \circ s(x)$ (this point exists by Claim~\ref{claim:gamma L covers}).  It is straightforward to see that this function $\hat{s}$ is continuous and satisfies the properties (1)--(5) of the Lemma.

\medskip
\textbf{Case~2:} Suppose $y^+ < \gamma$.

Let $\gamma_1 < \gamma_2 < \cdots < \gamma_k = \gamma$ enumerate all the right contour points of $t$ strictly between $y^+$ and $L(y^-)$.  If $\gamma_1$ is not the first right contour point of $t$, we consider also the right contour point $\gamma_0$ of $t$ immediately preceding $\gamma_1$; if $\gamma_1$ is the first right contour point of $t$, then let $\gamma_0 = 0$.  Also, for convenience, denote $\gamma_{k+1} = L(y^-)$ (but note that this is not necessarily a right contour point of $t$).  Then
\[ 0 \leq \gamma_0 < \gamma_1 < \gamma_2 < \cdots < \gamma_k = \gamma < \gamma_{k+1} = L(y^-) \]
and for each $1 \leq i \leq k+1$, we have $t([0,\gamma_i]) = t([\gamma_{i-1},\gamma_i])$.  See Figure~\ref{fig:stay right} for an illustration.

\begin{figure}
\begin{center}
\includegraphics{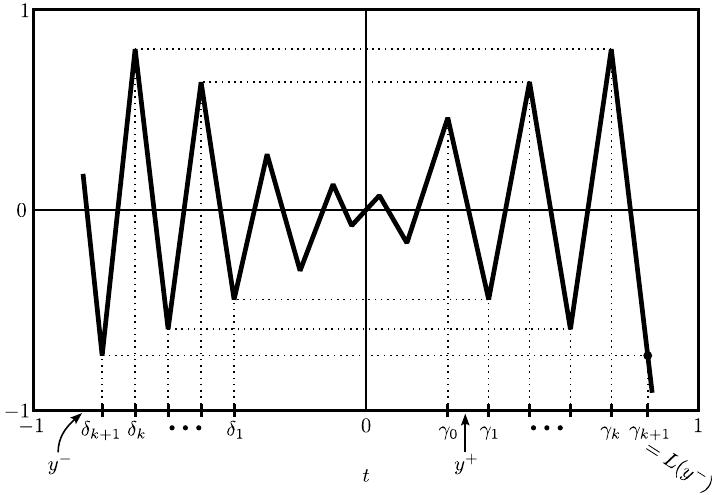}
\end{center}

\caption{A sample illustration of part of the map $t$ for Case~2 in the proof of Lemma~\ref{lem:stay right}, with the points $y^-$ and $y^+$ indicated, as well as the points $\gamma_0,\gamma_1,\ldots,\gamma_k,\gamma_{k+1}$ and $\delta_1,\ldots,\delta_k,\delta_{k+1}$ from Claim~\ref{claim:twins}.}
\label{fig:stay right}
\end{figure}

\begin{claim}
\label{claim:twins}
There exist points $\delta_1,\ldots,\delta_{k+1}$ with
\[ y^- \leq \delta_{k+1} < \delta_k < \cdots < \delta_2 < \delta_1 < 0 \]
and such that for each $1 \leq i \leq k+1$,
\begin{enumerate}
\item $\delta_i$ is a left departure of $t$ with $t(\delta_i) = t(\gamma_i)$; and
\item $t([\delta_i,y^+]) \subseteq t([\gamma_{i-1},\gamma_i])$.
\end{enumerate}
\end{claim}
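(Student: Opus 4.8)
The plan is to construct the points $\delta_i$ one at a time, in order of decreasing index $i$, starting from $\delta_{k+1}$ and working down to $\delta_1$. The existence of each $\delta_i$ will come from the defining property of the right contour points $\gamma_i$, which guarantees a ``zig-zag'' of $t$ on the left of $0$ that mirrors the pattern of contour values appearing on $[0,\gamma_i]$. Concretely, for the base case I would use that $t([0,\gamma_{k+1}]) = t([0,L(y^-)]) \supseteq t([y^-,0])$ by the definition of $L(y^-)$; since $\gamma_k = \gamma$ is a right contour point with (say) $t(\gamma) > 0$, and $L(y^-)$ is a right departure of $t$ with no contour point between $\gamma$ and $L(y^-)$, the argument from the proof of Claim~\ref{claim:gamma L covers} shows $t(L(y^-)) < 0$ and in fact $t([y^-,0])$ reaches below $t(L(y^-))$ is not needed — rather I want the leftmost point where $t$ achieves a value matching $t(\gamma_{k+1})$. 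Set $\delta_{k+1}$ to be the largest $x \in [y^-,0)$ with $t(x) = t(\gamma_{k+1})$ that is a left departure; such a point exists because $t([y^-,0]) \ni 0$ and the interval $t([y^-,0])$ must contain $t(\gamma_{k+1}) = t(L(y^-))$ (this uses $t([0,L(y^-)]) \supseteq t([y^-,0])$ together with the observation that $L(y^-)$ being the \emph{minimal} such right endpoint forces $t(L(y^-))$ to be an ``extreme'' value attained by $t$ on $[y^-,0]$).

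For the inductive step, suppose $\delta_{i+1}$ has been constructed with $t(\delta_{i+1}) = t(\gamma_{i+1})$ and $t([\delta_{i+1},y^+]) \subseteq t([\gamma_i,\gamma_{i+1}])$. I want $\delta_i$ with $\delta_{i+1} < \delta_i < 0$, a left departure of $t$, $t(\delta_i) = t(\gamma_i)$, and $t([\delta_i,y^+]) \subseteq t([\gamma_{i-1},\gamma_i])$. The key point is that $\gamma_i$ and $\gamma_{i+1}$ are consecutive right contour points (or $\gamma_{i+1} = L(y^-)$), so $t(\gamma_i)$ and $t(\gamma_{i+1})$ lie on opposite sides of $0$, and $t([0,\gamma_{i+1}]) = t([\gamma_i,\gamma_{i+1}])$. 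Since $\delta_{i+1}$ maps to $t(\gamma_{i+1})$, which is ``beyond'' $t(\gamma_i)$ (i.e., $t(\gamma_i)$ is strictly between $0$ and $t(\gamma_{i+1})$ in value, on the same side as $t(\gamma_{i+1})$ — wait, opposite side), the map $t$ restricted to $[\delta_{i+1},0]$ must cross the value $t(\gamma_i)$; I take $\delta_i$ to be the largest $x \in (\delta_{i+1},0)$ with $t(x) = t(\gamma_i)$, and argue it is a left departure because between $\delta_i$ and $0$ the map $t$ does not return to the value $t(\gamma_i)$ on the correct side. The containment $t([\delta_i,y^+]) \subseteq t([\gamma_{i-1},\gamma_i])$ then follows because on $[\delta_i, y^+]$ the extreme value on the $t(\gamma_i)$-side is $t(\gamma_i)$ itself (that's where $\delta_i$ sits), and the extreme value on the other side is controlled by $t(\gamma_{i-1})$ via the relation $t([0,\gamma_i]) = t([\gamma_{i-1},\gamma_i])$ together with $y^+ < \gamma \le \gamma_i$ hence $t(y^+) \in t([0,\gamma_i])$.

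The main obstacle I anticipate is bookkeeping the orientations carefully: the value $t(\gamma_i)$ alternates sign with $i$, so the phrases ``largest $x$ with $t(x) = \cdots$'' and ``left departure'' have to be interpreted with the correct inequality direction at each step, and one must verify that the newly chosen $\delta_i$ genuinely satisfies $\delta_i > \delta_{i+1}$ rather than accidentally coinciding with or overshooting a previously chosen point. I would handle this by treating the two sign cases symmetrically (as the paper does elsewhere, e.g.\ in the proof of Claim~\ref{claim:gamma L covers}), fixing the convention $t(\gamma) > 0$ and deducing the parity of $t(\gamma_i)$ from $k - i$, and then invoking Lemma~\ref{lem:liftable condition}/Definition~\ref{defn:liftable}(2) — the liftability of $[y^-,y^+]$ — precisely to rule out the bad configuration where some $\delta_i$ would be forced to drop below $y^-$: condition (2) says there is no radial departure $\langle y_1, y_2\rangle$ of $t$ with $y^- \le y_1$ and $y^+ < y_2$, which is exactly what prevents the zig-zag on the left from extending past $y^-$ before all $k+1$ points have been placed. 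Once the orientation cases are pinned down, each individual step is a short argument about piecewise-linear maps crossing a prescribed level, so no lengthy computation is needed.
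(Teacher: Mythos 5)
Your construction follows the same skeleton as the paper's proof (build $\delta_{k+1}$ from the minimality of $L(y^-)$, then recursively take $\delta_i$ to be the largest point of $(\delta_{i+1},0)$ at which $t$ attains the value $t(\gamma_i)$, noting maximality makes it a left departure), but the two places where the liftable-range hypothesis actually does the work are missing or misplaced. First, the existence of the crossing in the inductive step is not an intermediate-value argument: since consecutive contour values alternate sign, $t(\gamma_i)$ lies on the \emph{opposite} side of $0$ from $t(\delta_{i+1}) = t(\gamma_{i+1})$, so $t$ running from $t(\gamma_{i+1})$ at $\delta_{i+1}$ to $0$ at $0$ is in no way forced to hit $t(\gamma_i)$ — your parenthetical ``wait, opposite side'' flags exactly the problem, but the conclusion ``must cross the value $t(\gamma_i)$'' is then unjustified. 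The correct argument is that if $t(\gamma_i)$ were \emph{not} attained on $(\delta_{i+1},0)$, then $\langle \delta_{i+1},\gamma_i \rangle$ would be a radial departure of $t$ with $y^- \leq \delta_{i+1}$ and $y^+ < \gamma_i$, contradicting condition (2) of Definition~\ref{defn:liftable}; this is where liftability enters, not (as you say) to keep $\delta_i$ from ``dropping below $y^-$'' — once $\delta_{k+1} \geq y^-$ is fixed, all later $\delta_i$ lie in $(\delta_{i+1},0)$ automatically, so that is not the danger.

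Second, your verification of $t([\delta_i,y^+]) \subseteq t([\gamma_{i-1},\gamma_i])$ only controls the $t(\gamma_i)$-side (via $\delta_i$ being a left departure) and the values on $[0,y^+]$ (via $y^+ < \gamma_i$); it does not rule out $t$ overshooting past $t(\gamma_{i-1})$ somewhere on $[\delta_i,0)$. That, too, requires the liftable-range hypothesis: if some left departure $y \in [\delta_i,0]$ had $t(y)$ beyond $t(\gamma_{i-1})$, then (using that $\delta_i$ and $\gamma_i$ are departures with the same value) $\langle y,\gamma_i \rangle$ would be a radial departure of $t$ with $y^- \leq y$ and $y^+ < \gamma_i$, again contradicting Definition~\ref{defn:liftable}(2). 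So the recursion itself is right, but the two contradiction arguments via forbidden radial departures — the actual content of the claim — are absent from the proposal.
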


In fact, the two sets in condition (2) will be equal, but we will only use the inclusion stated.

\begin{proof}[Proof of Claim~\ref{claim:twins}]
\renewcommand{\qedsymbol}{\textsquare (Claim~\ref{claim:twins})}
First observe that for each $1 \leq i \leq k+1$, since $y^+ < \gamma_i$, we have $t([0,y^+]) \subseteq t([0,\gamma_i]) = t([\gamma_{i-1},\gamma_i])$.  In light of this, for condition (2) it will suffice to show that $t([\delta_i,0]) \subseteq t([0,\gamma_i])$.

We construct the points $\delta_i$ recursively, starting at $i = k+1$ and proceeding down to $i = 1$.

For $i = k+1$, there exists a point $\delta \in [y^-,0)$ with $t(\delta) = t(\gamma_{k+1}) = t(L(y^-))$ by minimality of $L(y^-)$.  Let $\delta_{k+1}$ be the maximal such $\delta$, so that $\delta_{k+1}$ is a left departure of $t$.  Now $t([\delta_{k+1},0]) \subseteq t([y^-,0]) \subseteq t([0,L(y^-)]) = t([0,\gamma_{k+1}])$, thus condition (2) is satisfied for $i = k+1$.  We point out that $\delta_{k+1}$ must either be a left contour point of $t$, or $\delta_{k+1} = y^-$, though we will not use this observation in the argument here.

Now suppose $1 \leq i \leq k$, and that $\delta_{i+1}$ has been constructed.  Notice that $t([0,\gamma_i])$ is a closed interval with endpoints $t(\gamma_i)$ and $t(\gamma_{i-1})$, and $t(\delta_{i+1}) = t(\gamma_{i+1}) \notin t([0,\gamma_i])$.  Also, $t(\delta_{i+1}) \notin t((\delta_{i+1},0])$ since $\delta_{i+1}$ is a left departure of $t$, and $t(\gamma_i) \notin t([0,\gamma_i))$ since $\gamma_i$ is a right departure of $t$.  So there must exist a point $\delta \in (\delta_{i+1},0)$ with $t(\delta) = t(\gamma_i)$, for otherwise $\langle \delta_{i+1},\gamma_i \rangle$ would be a radial departure of $t$ with $y^- \leq \delta_{i+1}$ and $y^+ < \gamma_i$, contradicting the assumption that $[y^-,y^+]$ is a liftable range.  Let $\delta_i \in (\delta_{i+1},0)$ be maximal such that $t(\delta_i) = t(\gamma_i)$, so that $\delta_i$ is a left departure of $t$.  We point out that $\delta_i$ must in fact be a contour point of $t$, though we will not use this observation in the argument here.

To verify condition (2), suppose for simplicity that $t(\gamma_i) > 0$, so that $t(\gamma_{i-1}) \leq 0$ (the case $t(\gamma_i) < 0$ is similar).  Then $t([0,\gamma_i]) = [t(\gamma_{i-1}),t(\gamma_i)]$.  Suppose for a contradiction that there exists $y \in [\delta_i,0]$ such that $t(y) \notin [0,t(\gamma_i)]$.  We may assume that $y$ is a left departure of $t$.  Since $\delta_i$ is a left departure of $t$ with $t(\delta_i) = t(\gamma_i)$, and $y \in [\delta_i,0]$, it follows that $t(y) < t(\gamma_{i-1})$.  Now $t(\gamma_i) \notin t([0,\gamma_i))$ since $\gamma_i$ is a right departure of $t$, and $t(\gamma_i) = t(\delta_i) \notin t((\delta_i,0]) \supseteq t((y,0])$ since $\delta_i$ is a left departure of $t$.  Further, $t(y) \notin t((y,0])$ since $y$ is a left departure of $t$, and $t(y) \notin [t(\gamma_{i-1}),t(\gamma_i)] = t([0,\gamma_i])$.  But this means $\langle y,\gamma_i \rangle$ is a (positive) radial departure of $t$ with $y^- \leq y$ and $y^+ < \gamma_i$, contradicting the assumption that $[y^-,y^+]$ is a liftable range.  Therefore $t([\delta_i,0]) \subseteq t([0,\gamma_i])$, as desired.
\end{proof}

For simplicity we assume that $a < b$; the argument can easily be adapted in the event that $b < a$.

For each $1 \leq i \leq k+1$, let $x_i \in [a,b]$ be minimal such that $s(x_i) = \delta_i$.  Also, let $x_0 = a$.  Then
\[ a = x_0 < x_1 < x_2 < \cdots < x_k < x_{k+1} \leq b, \]
and
\begin{enumerate}
	\item[(a)] $s([x_{i-1},x_i]) \subseteq [\delta_i,y^+]$ for each $1 \leq i \leq k+1$, and
	\item[(b)] $s([x_{k+1},b]) \subseteq [\delta_{k+1},y^+]\subseteq [y^{-},L(y^{-})]$.
\end{enumerate}

Define $\hat{s}$ as follows.  If $x \in [x_{i-1},x_i]$ for some $1 \leq i \leq k+1$, let $\hat{s}(x)$ be the (unique) point $y \in [\gamma_{i-1},\gamma_i]$ such that $t(y) = t \circ s(x)$ (this point exists by Claim~\ref{claim:twins}(2) and (a)).  If $x \in [x_{k+1},b]$ let $\hat{s}(x)$ be the (unique) point $y \in [\gamma_k,L(y^-)]$ such that $t(y) = t \circ s(x)$ (this point exists by Claim~\ref{claim:gamma L covers} and (b)).  

It is straightforward to see that this function $\hat{s}$ is continuous, with $\hat{s}(x_i) = \gamma_i$ for each $1 \leq i \leq k+1$.  As for the properties (1)--(5) of the Lemma, properties (1), (2), and (3) are immediate from the construction, and (5) follows from (1) and the fact that $\hat{s}(b) \in [\gamma,L(y^-)]$ and $t$ is one-to-one on $[\gamma,L(y^-)]$.  For (4), note that $\hat s(x)\leq L(y^{-})$ for all $x\in [a,b]$ by construction. Also, $\hat s(x_{k+1})=L(y^{-})$, and $s(x_{k+1})=\delta_{k+1}\neq \hat s(x_{k+1})$.
\end{proof}

For the next Lemmas, we adopt the following notation: let $0 < \alpha_1 < \alpha_2 < \ldots < \alpha_n \leq 1$ enumerate all the right contour points of $s$, and let $0 > \beta_1 > \beta_2 > \ldots > \beta_m \geq -1$ enumerate all the left contour points of $s$.  Also, denote $\alpha_0 = \beta_0 = 0$.

\begin{lem}[Bridging in I]
\label{lem:bridging I}
Let $\alpha_i$ be a negative right contour point of $s$, so that $\alpha_{i-1}$ is a positive right contour point of $s$ or $\alpha_{i-1} = 0$.  Then there exists $\hat{s}_{\alpha_i} \colon [\alpha_{i-1},\alpha_{i+1}] \to [0,1]$ (or $\hat{s}_{\alpha_i} \colon [\alpha_{i-1},1] \to [0,1]$ if $\alpha_i$ is the last negative right contour point of $s$) such that:
\begin{enumerate}[label=(\arabic{*})]
\item $t \circ \hat{s}_{\alpha_i}(x) = f(x)$ for all $x \in \operatorname{dom} \hat{s}_{\alpha_i}$;
\item $\hat{s}_{\alpha_i}(\alpha_{i-1}) = s(\alpha_{i-1})$ and, if $\alpha_i$ is not the last right contour point of $s$, then $\hat{s}_{\alpha_i}(\alpha_{i+1}) = s(\alpha_{i+1})$;
\item $\hat{s}_{\alpha_i}(x) \geq s(x)$ for all $x \in \operatorname{dom} \hat{s}_{\alpha_i}$; and
\item $\max \{\hat{s}_{\alpha_i}(x): \hat{s}_{\alpha_i}(x) \neq s(x)\} = L(s(\alpha_i))$, and there exists $x \in [\alpha_{i-1},\alpha_i]$ such that $\hat{s}_{\alpha_i}(x) = L(s(\alpha_i))$ (recall that the function $L$ is defined in Definition~\ref{defn:L}).
\end{enumerate}
\end{lem}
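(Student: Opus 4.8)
The plan is to reduce the statement to an application of Lemma~\ref{lem:stay right}. The key observation is that since $\alpha_i$ is a negative right contour point of $s$ and $\alpha_{i-1}$ is a positive right contour point (or $0$), the restriction of $s$ to $[\alpha_{i-1},\alpha_i]$ exhibits precisely the configuration needed: $s$ attains a positive maximum value $y^+ := s(\alpha_{i-1})$ at the left endpoint (this uses that between consecutive contour points $s$ has no departures of the orientation opposite to the later one, so the positive values of $s$ on $[\alpha_{i-1},\alpha_i]$ are bounded by $s(\alpha_{i-1})$), and it attains a negative minimum value $y^- := s(\alpha_i)$ at the right endpoint. First I would verify that $s([\alpha_{i-1},\alpha_i]) = [y^-,y^+]$ and that $s(\alpha_{i-1}) = y^+$, then invoke Lemma~\ref{lem:liftable condition} (with $x = \alpha_{i-1}$, $x' = \alpha_i$) to conclude that $[y^-,y^+]$ is a liftable range.

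Next I would apply Lemma~\ref{lem:stay right} to the interval $I = [\alpha_{i-1},\alpha_i]$ with $a = \alpha_{i-1}$ and $b = \alpha_i$, obtaining a map $\hat{s} \colon [\alpha_{i-1},\alpha_i] \to [0,1]$ with $t \circ \hat{s} = f$ on $I$, $\hat{s}(\alpha_{i-1}) = y^+ = s(\alpha_{i-1})$, $\hat{s} \geq s$ on $I$, $\max\{\hat{s}(x) : \hat{s}(x) \neq s(x)\} = L(y^-) = L(s(\alpha_i))$, and a prescribed value at $b = \alpha_i$. On the remaining interval $[\alpha_i,\alpha_{i+1}]$ (or $[\alpha_i,1]$), I would use Lemma~\ref{lem:liftable comparison} to observe that the relevant ranges are still liftable, and apply Lemma~\ref{lem:stay right} a second time — but now with the roles reversed, taking the interval with left endpoint $\alpha_{i+1}$ if $\alpha_{i+1}$ is a positive right contour point, so that the maximum is attained at $\alpha_{i+1}$; one then glues the two pieces. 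The gluing is legitimate provided the value $\hat{s}(\alpha_i)$ produced by the first application agrees with the value produced by the second; here I would use the remark following Lemma~\ref{lem:stay right}, that $\hat{s}(b)$ depends only on $t$, $y^-$, and $s(b)$, together with the fact that $s(\alpha_i) = y^-$ is the common bottom value, so both applications assign $\hat{s}(\alpha_i)$ the largest $y \in [0, L(s(\alpha_i))]$ with $t(y) = f(\alpha_i)$.

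After gluing, properties (1)–(3) are immediate from the corresponding properties in Lemma~\ref{lem:stay right} on each piece and the fact that the pieces agree at $\alpha_i$. For property (2) I also need $\hat{s}_{\alpha_i}(\alpha_{i+1}) = s(\alpha_{i+1})$ when $\alpha_i$ is not the last contour point: if $\alpha_{i+1}$ is a positive contour point, then $s(\alpha_{i+1})$ is the maximum of $s$ on $[\alpha_i,\alpha_{i+1}]$, which will be the ``$a$'' endpoint for the second application of Lemma~\ref{lem:stay right}, so its value is preserved; if $\alpha_{i+1}$ is negative (which can only happen if $\alpha_{i+1}$ is itself a negative contour point, impossible since $\alpha_i$ already is — so this case does not arise, as consecutive contour points alternate in orientation). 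For property (4), the maximum of $\hat{s}_{\alpha_i}$ over points where it differs from $s$ equals $L(s(\alpha_i))$ from the first piece, and on the second piece the analogous maximum is $L(w^-)$ for the bottom value $w^-$ of $s$ on $[\alpha_i,\alpha_{i+1}]$, which satisfies $w^- \geq s(\alpha_i) = y^-$, hence $L(w^-) \leq L(y^-) = L(s(\alpha_i))$ by monotonicity of $L$; and the witness $x \in [\alpha_{i-1},\alpha_i]$ with $\hat{s}_{\alpha_i}(x) = L(s(\alpha_i))$ comes directly from the first application.

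I expect the main obstacle to be the bookkeeping around the second interval and the gluing: one must check carefully that the second application of Lemma~\ref{lem:stay right} is set up with the maximum at the correct endpoint (requiring the alternation of orientations of consecutive contour points of $s$, and the elementary fact that $s$ restricted to an interval between consecutive contour points attains its extreme value of the relevant sign at an endpoint), and that the two definitions of $\hat{s}_{\alpha_i}(\alpha_i)$ coincide. The edge cases — $\alpha_{i-1} = 0$, and $\alpha_i$ being the last negative right contour point (so the domain extends to $1$ and the second interval is $[\alpha_i, 1]$, where the top value of $s$ may be attained in the interior rather than at $1$) — require separate but routine handling; in the last case there is no endpoint-value constraint (2) to preserve on the right, so one may simply apply Lemma~\ref{lem:stay right} with $a = \alpha_i$ reading the interval from the maximum of $s$ on $[\alpha_i,1]$, or more directly leave $\hat{s}_{\alpha_i} = s$ wherever $s$ is already nonnegative on $[\alpha_i,1]$ and lift only the dips below zero.
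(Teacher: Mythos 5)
Your main case is the paper's argument: split at $\alpha_i$, apply Lemma~\ref{lem:stay right} to $[\alpha_{i-1},\alpha_i]$ with $a=\alpha_{i-1}$ and to $[\alpha_i,\alpha_{i+1}]$ with $a=\alpha_{i+1}$ (the ranges $[s(\alpha_i),s(\alpha_{i-1})]$ and $[s(\alpha_i),s(\alpha_{i+1})]$ being liftable by Lemmas~\ref{lem:liftable condition} and \ref{lem:liftable comparison}), glue at $\alpha_i$ using the fact that both applications force $\hat{s}(\alpha_i)$ to be the largest $y\in[0,L(s(\alpha_i))]$ with $t(y)=f(\alpha_i)$, and read off (1)--(4), with the witness for (4) coming from the left half. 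That all matches the paper, including (after charitable reading) the sub-case where $\alpha_i$ is the last \emph{negative} right contour point but $\alpha_{i+1}$ exists: splitting $[\alpha_i,1]$ at $\alpha_{i+1}$ with $a=\alpha_{i+1}$ on both sides is exactly the paper's third piece, and it also delivers the constraint $\hat{s}_{\alpha_i}(\alpha_{i+1})=s(\alpha_{i+1})$, which --- contrary to your remark --- condition (2) does still require in that sub-case.

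The genuine gap is in the case where $\alpha_i$ is the last right contour point of $s$, i.e.\ where the remaining interval is $[\alpha_i,1]$ with no contour point beyond. Your plan is to split $[\alpha_i,1]$ at the point $m$ where $s$ attains its maximum on $[\alpha_i,1]$ and apply Lemma~\ref{lem:stay right} with $a=m$; but that lemma requires $s$ of the piece to equal a \emph{liftable} range with top attained at $a$, and the range here is $[s(\alpha_i),s(m)]$, whose top $s(m)$ may be strictly smaller than $s(\alpha_{i-1})$ (it can even be negative). Lemma~\ref{lem:liftable comparison} only allows raising the top of a liftable range, never lowering it --- lowering $y^+$ weakens condition (2) of Definition~\ref{defn:liftable} --- and Lemma~\ref{lem:liftable condition} does not apply since $m$ need not be a departure of $s$ and $s$ need not descend to $s(\alpha_i)$ before exceeding $s(m)$; so liftability of $[s(\alpha_i),s(m)]$ is unjustified. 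Your fallback, ``leave $\hat{s}_{\alpha_i}=s$ where $s\geq 0$ and lift only the dips below zero,'' fails for a different reason: the lift of a dip produced by Lemma~\ref{lem:stay right} ends at the largest $y\in[0,L(\cdot)]$ with $t(y)=f$ at that endpoint, which is generally nonzero, so it does not rejoin $s$ continuously at the right end of the dip. The paper avoids both problems by applying Lemma~\ref{lem:stay right} once to the whole interval $[\alpha_{i-1},1]$ with the liftable range $[s(\alpha_i),s(\alpha_{i-1})]$ and $a=\alpha_{i-1}$, and then proving separately that the witness for (4) lies in $[\alpha_{i-1},\alpha_i]$: one takes $S=\{x\in[\alpha_{i-1},\alpha_i]:s(x)<0\}$, notes $\hat{s}_{\alpha_i}\neq s$ on $S$, and shows $\max\hat{s}_{\alpha_i}(\overline{S})=L(s(\alpha_i))$ by combining property (4) of Lemma~\ref{lem:stay right} with the minimality of $L(s(\alpha_i))$, since $t([0,\max\hat{s}_{\alpha_i}(\overline{S})])\supseteq t([s(\alpha_i),0])$. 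This extra argument (or some replacement for it) is what your outline is missing.
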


In Figure~\ref{fig:bridging I}, we give a few illustrations of the map $\hat{s}_{\alpha_i}$ from Lemma~\ref{lem:bridging I} for sample functions $s$.

\begin{figure}
\begin{center}
\begin{subfigure}{0.45\linewidth}
\includegraphics{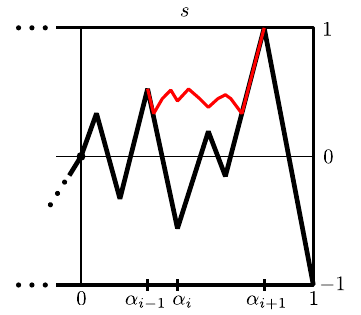}
\caption{$\alpha_i$ is not last negative right contour point, so that $\operatorname{dom} \hat{s}_{\alpha_i} = [\alpha_{i-1},\alpha_{i+1}]$.}
\label{fig:bridging I a}
\end{subfigure}\\
\begin{subfigure}{0.45\linewidth}
\includegraphics{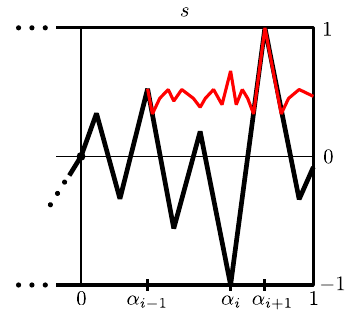}
\caption{$\alpha_i$ is last negative right contour point, so that $\operatorname{dom} \hat{s}_{\alpha_i} = [\alpha_{i-1},1]$.}
\label{fig:bridging I b}
\end{subfigure}
\hfill
\begin{subfigure}{0.45\linewidth}
\includegraphics{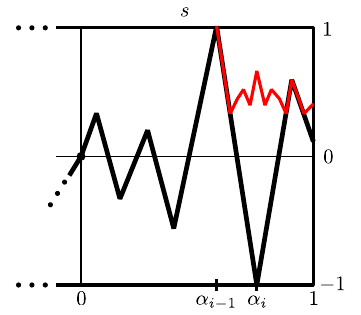}
\caption{$\alpha_i$ is last right contour point, so that $\operatorname{dom} \hat{s}_{\alpha_i} = [\alpha_{i-1},1]$.}
\label{fig:bridging I c}
\end{subfigure}
\end{center}

\caption{Three examples of maps $s$ with a negative right contour point $\alpha_i$, together with the map $\hat{s}_{\alpha_i}$ from Lemma~\ref{lem:bridging I} in red.}
\label{fig:bridging I}
\end{figure}

\begin{proof}
Observe that $[s(\alpha_i),s(\alpha_{i-1})]$ is a liftable range by Lemma~\ref{lem:liftable condition}, and so is $[s(\alpha_i),s(\alpha_{i+1})]$, if $\alpha_i$ is not the last right contour point of $s$, by Lemma~\ref{lem:liftable comparison}.

First suppose that $\alpha_i$ is the last right contour point of $s$ (see Figure~\ref{fig:bridging I c}).  Then $s([\alpha_{i-1},1]) = [s(\alpha_i),s(\alpha_{i-1})]$, and so by Lemma~\ref{lem:stay right} (using $I = [\alpha_{i-1},1]$ and $a = \alpha_{i-1}$), there exists a map $\hat{s}_{\alpha_i} \colon [\alpha_{i-1},1] \to [0,1]$ such that:
\begin{enumerate}[label=(\arabic{*})]
\item $t \circ \hat{s}_{\alpha_i}(x) = f(x)$ for all $x \in [\alpha_{i-1},1]$;
\item $\hat{s}_{\alpha_i}(\alpha_{i-1}) = s(\alpha_{i-1})$;
\item $\hat{s}_{\alpha_i}(x) \geq s(x)$ for all $x \in [\alpha_{i-1},1]$; and
\item $\max \{\hat{s}_{\alpha_i}(x): x \in [\alpha_{i-1},1]$ and $\hat{s}_{\alpha_i}(x) \neq s(x)\} = L(s(\alpha_i))$.
\end{enumerate}
Consider the set $S = \{x \in [\alpha_{i-1},\alpha_i]: s(x) < 0\}$.  Note that $s(S) = [s(\alpha_i),0)$, and $\hat{s}_{\alpha_i}(x) \neq s(x)$ for all $x \in S$ since $\hat{s}_{\alpha_i}(x) \geq 0$ for all $x \in [\alpha_{i-1},1]$.  Let $M = \max \hat{s}_{\alpha_i}(\overline{S})$.  Then $M \leq L(s(\alpha_i))$ by condition (4) above, but also $t([0,M]) \supseteq t \circ \hat{s}_{\alpha_i}(\overline{S}) = t \circ s(\overline{S}) = t([s(\alpha_i),0])$, hence $M \geq L(s(\alpha_i))$ by minimality of $L(s(\alpha_i))$.  Thus $M = L(s(\alpha_i))$, meaning there exists $x \in [\alpha_{i-1},\alpha_i]$ such that $\hat{s}_{\alpha_i}(x) = L(s(\alpha_i))$.

\medskip
Now suppose that $\alpha_i$ is not the last right contour point of $s$ (see Figures~\ref{fig:bridging I a} and \ref{fig:bridging I b}).  In this case we define the map $\hat{s}_{\alpha_i}$ in two ``halves'', first on $[\alpha_{i-1},\alpha_i]$ and then on $[\alpha_i,\alpha_{i+1}]$.  Further, if $\alpha_i$ is the last negative right contour point of $s$ (as in Figure~\ref{fig:bridging I b}), then we go on to define $\hat{s}_{\alpha_i}$ on $[\alpha_{i+1},1]$.

For the left ``half'' $[\alpha_{i-1},\alpha_i]$, $s([\alpha_{i-1},\alpha_i]) = [s(\alpha_i),s(\alpha_{i-1})]$, and so by Lemma~\ref{lem:stay right} (using $I = [\alpha_{i-1},\alpha_i]$ and $a = \alpha_{i-1}$), there exists a map $\hat{s}_L \colon [\alpha_{i-1},\alpha_i] \to [0,1]$ such that:
\begin{enumerate}[label=(L\arabic{*})]
\item $t \circ \hat{s}_L(x) = f(x)$ for all $x \in [\alpha_{i-1},\alpha_i]$;
\item $\hat{s}_L(\alpha_{i-1}) = s(\alpha_{i-1})$;
\item $\hat{s}_L(x) \geq s(x)$ for all $x \in [\alpha_{i-1},\alpha_i]$;
\item $\max \{\hat{s}_L(x): x \in [\alpha_{i-1},\alpha_i]$ and $\hat{s}_L(x) \neq s(x)\} = L(s(\alpha_i))$;
\item $\hat{s}_L(\alpha_i)$ is the largest value $y \in [0,L(s(\alpha_i))]$ such that $t(y) = f(\alpha_i)$.
\end{enumerate}

For the right ``half'' $[\alpha_i,\alpha_{i+1}]$, $s([\alpha_i,\alpha_{i+1}]) = [s(\alpha_i),s(\alpha_{i+1})]$, and so by Lemma~\ref{lem:stay right} (using $I = [\alpha_i,\alpha_{i+1}]$ and $a = \alpha_{i+1}$), there exists a map $\hat{s}_R \colon [\alpha_i,\alpha_{i+1}] \to [0,1]$ such that:
\begin{enumerate}[label=(R\arabic{*})]
\item $t \circ \hat{s}_R(x) = f(x)$ for all $x \in [\alpha_i,\alpha_{i+1}]$;
\item $\hat{s}_R(\alpha_{i+1}) = s(\alpha_{i+1})$;
\item $\hat{s}_R(x) \geq s(x)$ for all $x \in [\alpha_i,\alpha_{i+1}]$;
\item $\max \{\hat{s}_R(x): x \in [\alpha_i,\alpha_{i+1}]$ and $\hat{s}_R(x) \neq s(x)\} = L(s(\alpha_i))$; and
\item $\hat{s}_R(\alpha_i)$ is the largest value $y \in [0,L(s(\alpha_i))]$ such that $t(y) = f(\alpha_i)$.
\end{enumerate}

Now by (L5) and (R5), these two ``halves'' $\hat{s}_L$ and $\hat{s}_R$ agree at $\alpha_i$, so we may paste them together to get a map $\hat{s}_{\alpha_i} \colon [\alpha_{i-1},\alpha_{i+1}] \to [0,1]$ with $\hat{s}_{\alpha_i} {\restriction}_{[\alpha_{i-1},\alpha_i]} = \hat{s}_L$ and $\hat{s}_{\alpha_i} {\restriction}_{[\alpha_i,\alpha_{i+1}]} = \hat{s}_R$.

In the case that $\alpha_i$ is the last negative right contour point of $s$ (meaning $i = n-1$, so $\alpha_{i+1} = \alpha_n$ is the last right contour point of $s$), it remains to define $\hat{s}_{\alpha_i}$ on $[\alpha_{i+1},1]$.  Let $y^- = \min \{s(x): x \in [\alpha_{i+1},1]\}$.  If $y^- \geq 0$, then simply let $\hat{s}_{\alpha_i} {\restriction}_{[\alpha_{i+1},1]} = s {\restriction}_{[\alpha_{i+1},1]}$.  Suppose then that $y^- < 0$.  Observe that $s(\alpha_i) \leq y^-$ (since otherwise there would be negative right departures of $s$ after $\alpha_{i+1}$), so $[y^-,s(\alpha_{i+1})]$ is a liftable range by Lemma~\ref{lem:liftable comparison}, and also $L(y^-) \leq L(s(\alpha_i))$.  Now $s([\alpha_{i+1},1]) = [y^-,s(\alpha_{i+1})]$, so by Lemma~\ref{lem:stay right} (using $I = [\alpha_{i+1},1]$ and $a = \alpha_{i+1}$), there exists a map $\hat{s}_E \colon [\alpha_{i+1},1] \to [0,1]$ such that:
\begin{enumerate}[label=(E\arabic{*})]
\item $t \circ \hat{s}_E(x) = f(x)$ for all $x \in [\alpha_{i+1},1]$;
\item $\hat{s}_E(\alpha_{i+1}) = s(\alpha_{i+1})$;
\item $\hat{s}_E(x) \geq s(x)$ for all $x \in [\alpha_{i+1},1]$; and
\item $\max \{\hat{s}_E(x): x \in [\alpha_{i+1},1]$ and $\hat{s}_E(x) \neq s(x)\} = L(y^-)$.
\end{enumerate}
Then define $\hat{s}_{\alpha_i} {\restriction}_{[\alpha_{i+1},1]} = \hat{s}_E$.

The properties (L1)--(L4) and (R1)--(R4) (and (E1)--(E4) in the case that $\alpha_i$ is the last negative right contour point of $s$) taken together establish the conditions (1)--(4) of the Lemma.
\end{proof}

\begin{lem}[Bridging in II]
\label{lem:bridging II}
Let $f,s \colon [-1,1] \to [-1,1]$ be maps with $f(0) = s(0) = 0$, and suppose $f = t \circ s$.  Let $\beta_j$ be a negative left contour point of $s$, so that $\beta_{j-1}$ is a positive left contour point of $s$ or $\beta_{j-1} = 0$.  Suppose there exists $1 \leq i \leq n$ such that
\begin{enumerate}[label=($\dagger$), ref=($\dagger$)]
\item \label{Y} $s(\alpha_i) \leq s(\beta_j)$ and $s(\alpha_{i-1}) \leq s(\beta_{j-1})$.
\end{enumerate}
Then there exists $\hat{s}_{\beta_j} \colon [\beta_{j+1},\beta_{j-1}] \to [0,1]$ (or $\hat{s}_{\beta_j} \colon [-1,\beta_{j-1}] \to [0,1]$ if $\beta_j$ is the last negative left contour point of $s$) such that:
\begin{enumerate}[label=(\arabic{*})]
\item $t \circ \hat{s}_{\beta_j}(x) = f(x)$ for all $x \in \operatorname{dom} \hat{s}_{\beta_j}$;
\item $\hat{s}_{\beta_j}(\beta_{j-1}) = s(\beta_{j-1})$ and, if $\beta_j$ is not the last left contour point of $s$, then $\hat{s}_{\beta_j}(\beta_{j+1}) = s(\beta_{j+1})$;
\item $\hat{s}_{\beta_j}(x) \geq s(x)$ for all $x \in \operatorname{dom} \hat{s}_{\beta_j}$; and
\item $\max \{\hat{s}_{\beta_j}(x): \hat{s}_{\beta_j}(x) \neq s(x)\} = L(s(\beta_j))$, and there exists $x \in [\beta_j,\beta_{j-1}]$ such that $\hat{s}_{\beta_j}(x) = L(s(\beta_j))$ (recall that the function $L$ is defined in Definition~\ref{defn:L}).
\end{enumerate}
\end{lem}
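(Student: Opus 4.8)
The plan is to imitate the proof of Lemma~\ref{lem:bridging I}, with the roles of the left and right sides of $0$ interchanged. The crucial point is that Lemma~\ref{lem:stay right} is insensitive to which side of $0$ the interval $I$ lies on: it only requires that $[y^-,y^+]$ be a liftable range (a condition phrased purely in terms of $t$), that $s(I)=[y^-,y^+]$, and that $s(a)=y^+$. Consequently the only genuinely new ingredient is to verify, using hypothesis~\ref{Y}, that the intervals $[s(\beta_j),s(\beta_{j-1})]$ and (if $\beta_j$ is not the last left contour point of $s$) $[s(\beta_j),s(\beta_{j+1})]$ are liftable ranges. This verification is necessary precisely because Definition~\ref{defn:liftable} is asymmetric, favoring the positive side, so liftability of a range whose negative part descends to $s(\beta_j)$ cannot be obtained from the left contour structure of $s$ alone.

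To establish the liftable ranges: since $\beta_j$ is a negative left contour point we have $s(\beta_j)<0$, so \ref{Y} forces $s(\alpha_i)\le s(\beta_j)<0$, whence $\alpha_i$ is a negative right contour point of $s$ and $\alpha_{i-1}$ is a positive right contour point of $s$ (or $\alpha_{i-1}=0$), so $s(\alpha_{i-1})\ge 0$. As $\alpha_{i-1}$ and $\alpha_i$ are consecutive right contour points, $s([\alpha_{i-1},\alpha_i])=[s(\alpha_i),s(\alpha_{i-1})]$, and Lemma~\ref{lem:liftable condition} (with $x=\alpha_{i-1}$, $x'=\alpha_i$) shows $[s(\alpha_i),s(\alpha_{i-1})]$ is a liftable range. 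Since by \ref{Y} we have $s(\alpha_i)\le s(\beta_j)<0\le s(\alpha_{i-1})\le s(\beta_{j-1})$, Lemma~\ref{lem:liftable comparison} gives that $[s(\beta_j),s(\beta_{j-1})]$ is a liftable range; and when $\beta_j$ is not the last left contour point, $\beta_{j+1}$ is a positive left contour point with $0\le s(\beta_{j-1})\le s(\beta_{j+1})$ (as in the proof of Lemma~\ref{lem:bridging I}, using that the value of $s$ at a positive contour point exceeds all earlier values), so Lemma~\ref{lem:liftable comparison} again yields that $[s(\beta_j),s(\beta_{j+1})]$ is a liftable range.

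The construction of $\hat{s}_{\beta_j}$ then proceeds along the same lines as in Lemma~\ref{lem:bridging I}. If $\beta_j$ is the last left contour point of $s$, then $s([-1,\beta_{j-1}])=[s(\beta_j),s(\beta_{j-1})]$ with maximum attained at $\beta_{j-1}$, and a single application of Lemma~\ref{lem:stay right} with $I=[-1,\beta_{j-1}]$ and $a=\beta_{j-1}$ produces $\hat{s}_{\beta_j}$; the existence of $x\in[\beta_j,\beta_{j-1}]$ with $\hat{s}_{\beta_j}(x)=L(s(\beta_j))$ follows, exactly as in Lemma~\ref{lem:bridging I}, from the minimality in Definition~\ref{defn:L} applied to $\overline{S}$ with $S=\{x\in[\beta_j,\beta_{j-1}]:s(x)<0\}$. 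Otherwise one builds $\hat{s}_{\beta_j}$ in two halves, applying Lemma~\ref{lem:stay right} to $[\beta_j,\beta_{j-1}]$ (with $a=\beta_{j-1}$) and to $[\beta_{j+1},\beta_j]$ (with $a=\beta_{j+1}$); conclusion (5) of Lemma~\ref{lem:stay right} for the two halves fixes their common value at $\beta_j$ to be the largest $y\in[0,L(s(\beta_j))]$ with $t(y)=f(\beta_j)$, so they paste together, and if moreover $\beta_j$ is the last negative left contour point but not the last left contour point, one extends over $[-1,\beta_{j+1}]$ exactly as $\hat{s}_E$ was constructed in Lemma~\ref{lem:bridging I} (with $y^-=\min s([-1,\beta_{j+1}])$, noting $s(\beta_j)\le y^-$ and hence $L(y^-)\le L(s(\beta_j))$). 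Properties (1)--(4) of the Lemma then read off directly from the conclusions of these applications of Lemma~\ref{lem:stay right}.

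The main obstacle is the liftability verification above: one must recognize that hypothesis~\ref{Y} is exactly what certifies that $[s(\beta_j),s(\beta_{j-1})]$ and $[s(\beta_j),s(\beta_{j+1})]$ are liftable, given the asymmetry of Definition~\ref{defn:liftable}. After that the work is a routine mirror image of the proof of Lemma~\ref{lem:bridging I}, the only care needed being the left-side analogs of the image identities used there (such as $s([\beta_j,\beta_{j-1}])=[s(\beta_j),s(\beta_{j-1})]$ and the control of $s$ to the left of the last left contour point), each obtained by precomposing the appropriate restriction of $s$ with $\mathsf{r}$.
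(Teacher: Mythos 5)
Your proposal is correct and follows essentially the same route as the paper: the paper's proof likewise obtains liftability of $[s(\beta_j),s(\beta_{j-1})]$ by applying Lemma~\ref{lem:liftable condition} to $[s(\alpha_i),s(\alpha_{i-1})]$ and then Lemma~\ref{lem:liftable comparison} via hypothesis~\ref{Y}, and then declares the rest ``very similar to the proof of Lemma~\ref{lem:bridging I}''. Your write-up simply fills in the mirrored construction (the two halves glued at $\beta_j$ via conclusion (5) of Lemma~\ref{lem:stay right}, plus the extension past the last contour point) that the paper omits.
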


\begin{proof}
By Lemma~\ref{lem:liftable condition}, $[s(\alpha_i),s(\alpha_{i-1})]$ is a liftable range.  Therefore, in light of the condition \ref{Y}, the same goes for $[s(\beta_j),s(\beta_{j-1})]$ by Lemma~\ref{lem:liftable comparison}.  The remainder of the proof is very similar to that of Lemma~\ref{lem:bridging I}, and we omit the details.
\end{proof}

\section{Common contour factors}
\label{sec:same contour}

The next Lemma is a stengthening of Lemma~7.2 of \cite{ammerlaan-anusic-hoehn2023}, which has the same form, except that, there, an additional assumption was made on the functions $f_1$ and $f_2$, namely that they have ``no contour twins''.  By removing this assumption, we are able to improve on the main result of that paper.

\begin{lem}
\label{lem:bridged s}
Let $f_1,f_2,f_3 \colon [-1,1] \to [-1,1]$ be piecewise-linear maps with $f_1(0) = f_2(0) = f_3(0) = 0$, and such that $t_{f_1} = t_{f_1 \circ f_2}$ and $t_{f_2} = t_{f_2 \circ f_3}$.  Then there exists $\tilde{s} \colon [-1,1] \to [-1,1]$ with $\tilde{s}(0) = 0$ such that:
\begin{enumerate}
\item $t_{f_1} \circ \tilde{s} = f_1 \circ f_2$; and
\item $\tilde{s} \circ t_{f_3}$ has no negative radial departures.
\end{enumerate}
\end{lem}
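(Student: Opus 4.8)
The plan is to start from a radial meandering factor $s_0$ of $f_1 \circ f_2$ with respect to $t_{f_1} = t_{f_1 \circ f_2}$, and to repeatedly apply the Bridging Lemmas (Lemmas~\ref{lem:bridging I} and \ref{lem:bridging II}) to ``lift'' $s_0$ on the intervals surrounding its negative contour points, so that the resulting map $\tilde s$ still satisfies $t_{f_1} \circ \tilde s = f_1 \circ f_2$ but has a much more restricted pattern of negative departures. The key point will be that the final map $\tilde s$ we produce should have the property that $\tilde s$ has no negative radial departures, or more precisely that the negative right departures and negative left departures of $\tilde s$ are arranged so that no negative radial departure can arise — and then to transfer this to $\tilde s \circ t_{f_3}$ using Proposition~\ref{prop:comp dep}: a negative radial departure of $\tilde s \circ t_{f_3}$ forces, via the two cases of that proposition, either a negative radial departure of $\tilde s$ directly, or a negative radial departure of $t_{f_3}$ together with a radial departure of $\tilde s$ whose endpoints are the images under $t_{f_3}$; the hypothesis $t_{f_2} = t_{f_2 \circ f_3}$ is what controls the latter via Proposition~\ref{prop:comp same dep}, which tells us that images of negative radial departures of $f_3$ (hence of $t_{f_3}$, which shares the relevant structure) cannot straddle or be straddled badly by radial departures coming from the $f_2$-side.

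First I would fix notation: let $s_0$ be a radial meandering factor of $f_1 \circ f_2$, so $t_{f_1} \circ s_0 = f_1 \circ f_2$ and $s_0$ is sign-preserving, and let $\alpha_1 < \cdots < \alpha_n$ and $\beta_1 > \cdots > \beta_m$ be its right and left contour points. I would then process the negative right contour points $\alpha_i$ of $s_0$ one at a time, from left to right, applying Lemma~\ref{lem:bridging I} on the interval $[\alpha_{i-1},\alpha_{i+1}]$ (or $[\alpha_{i-1},1]$) to replace $s_0$ there by the lifted map $\hat s_{\alpha_i} \geq s_0$, which takes values in $[0,1]$ and still composes with $t_{f_1}$ to give $f_1 \circ f_2$. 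Because condition (2) of Lemma~\ref{lem:bridging I} guarantees the lifted pieces agree with the old map at the relevant positive contour points $\alpha_{i-1}, \alpha_{i+1}$, the pieces paste together consistently; after finitely many steps the resulting map $s_1$ has no negative right departures at all. I would then do the analogous thing on the left side using Lemma~\ref{lem:bridging II}, whose hypothesis \ref{Y} — the existence of a right contour point $\alpha_i$ with $s(\alpha_i) \le s(\beta_j)$ and $s(\alpha_{i-1}) \le s(\beta_{j-1})$ — needs to be verified; this is where I expect a bookkeeping lemma comparing the contour structure on the two sides of $0$ will be needed, presumably using that after the first round of bridging the right side has become ``deep enough'' to dominate each left contour excursion. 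The end product $\tilde s$ is sign-preserving-on-$[-1,1]$ up to having no negative departures on either side, so $\tilde s$ has no negative radial departures and satisfies (1).

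The hard part will be the transfer step (2): showing $\tilde s \circ t_{f_3}$ has no negative radial departures. Suppose $\langle x_1, x_2\rangle$ were one. By Proposition~\ref{prop:comp dep} applied to $f = \tilde s$, $g = t_{f_3}$, either (i) $\langle x_1,x_2\rangle$ is a positive radial departure of $t_{f_3}$ and $\langle t_{f_3}(x_1), t_{f_3}(x_2)\rangle$ is a negative radial departure of $\tilde s$ — impossible since $\tilde s$ has none; or (ii) $\langle x_1,x_2\rangle$ is a negative radial departure of $t_{f_3}$ and $\langle t_{f_3}(x_2), t_{f_3}(x_1)\rangle$ is a positive radial departure of $\tilde s$. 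Case (ii) is the crux: I must rule out that the image under $t_{f_3}$ of a negative radial departure is a positive radial departure of $\tilde s$. Here I would use that a negative radial departure of $t_{f_3}$ comes from a negative radial departure of $f_3$ (via Lemma~\ref{lem:s match} and the relation $f_3 = t_{f_3} \circ s_3$ for a meandering factor $s_3$), and then invoke Proposition~\ref{prop:comp same dep} with the pair $f_2, f_3$ (using $t_{f_2} = t_{f_2 \circ f_3}$): for any negative radial departure $\langle x_1,x_2\rangle$ of $f_3$, the point $\langle f_3(x_2), f_3(x_1)\rangle$ — equivalently $\langle t_{f_3}(x_2), t_{f_3}(x_1)\rangle$ — is not a radial departure of $f_2$, and in fact interleaves strictly with every radial departure of $f_2$. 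Since $\tilde s$ was built as a lift of a meandering factor of $f_1 \circ f_2$, its positive radial departures are, by Lemma~\ref{lem:s match}, exactly tied to radial departures of $t_{f_1} = t_{f_1 \circ f_2}$, and I would need to relate these back to radial departures of $f_2$ to derive the contradiction from the interleaving inequalities in Proposition~\ref{prop:comp same dep}. Making this last chain of identifications precise — connecting the radial departures of the bridged map $\tilde s$ to those of $f_2$ — is the step I anticipate requires the most care, and may need an auxiliary lemma tracking how the bridging modifications in Section~\ref{sec:bridging} interact with the ``$s$ match'' correspondence of Lemma~\ref{lem:s match}.
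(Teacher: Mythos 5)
There is a genuine gap, and it sits exactly where you flag uncertainty. First, your stated goal for the construction --- produce $\tilde{s}$ with $t_{f_1}\circ\tilde{s}=f_1\circ f_2$ and \emph{no negative radial departures} --- is both trivially achievable and insufficient: since $t_{f_1}=t_{f_1\circ f_2}$, any radial meandering factor $s_0$ of $f_1\circ f_2$ already satisfies (1), and being sign-preserving it has no negative radial departures at all (a negative radial departure $\langle x_1,x_2\rangle$ would force $s_0(x_2)<s_0(x_1)$ with $s_0(x_1)\le 0\le s_0(x_2)$). So ``$\tilde{s}$ has no negative radial departures'' does nothing toward ruling out your case (ii), which, as you correctly identify, is the crux: a \emph{negative} radial departure of $t_{f_3}$ mapping onto a \emph{positive} radial departure of $\tilde{s}$. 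What your argument would actually need is to kill the relevant positive radial departures of $\tilde{s}$, i.e.\ essentially to lift $\tilde{s}$ to be non-negative on suitable left-hand intervals; but this cannot be done wholesale. Lemma~\ref{lem:bridging II} carries the hypothesis \ref{Y} precisely because values of $f_1\circ f_2$ on $[-1,0]$ need not be attainable by $t_{f_1}$ on $[0,1]$ below the level $L(s(\beta_j))$, and \ref{Y} can genuinely fail at some left contour points; moreover your hope that right-hand bridging makes the right side ``deep enough'' is backwards, since bridging only raises values ($\hat{s}\ge s$) and \ref{Y} is a condition on the values of the original $s$ at its right contour points, which bridging does not change.

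The paper's proof is structured around exactly this obstruction: it takes $s=s_1\circ f_2$ (a meandering factor of $f_1$ composed with $f_2$, not a meandering factor of $f_1\circ f_2$), so that every radial departure of $s$ is a radial departure of $f_2$ and Proposition~\ref{prop:comp same dep} (via $t_{f_2}=t_{f_2\circ f_3}$ and $t_{f_2\circ t_{f_3}}=t_{f_2}$) yields an interleaving property against the images of negative radial departures of $t_{f_3}$; it then bridges only at a \emph{selected} set of right contour points ($B_1$, those witnessing negative radial departures of $s_1\circ f_2$) and a \emph{selected} set of left contour points ($B_2$, chosen by looking at the negative radial departures of $t_{f_3}$ and the already-built right half of $\tilde{s}$), verifies \ref{Y} for the chosen left points (Claim on $B_2$ partners), and finally proves the two cases of Proposition~\ref{prop:comp dep} separately: the $B_2$ selection kills your case (ii) by construction, while a separate, nontrivial argument shows the left-hand bridging does not create negative radial departures of $\tilde{s}$ hit by positive radial departures of $t_{f_3}$. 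Your proposal contains neither the selection mechanism nor the argument that selective bridging is harmless, and the tool you hope will close case (ii) --- Proposition~\ref{prop:comp same dep} applied to $f_2,f_3$ --- constrains radial departures of $f_2$, not of an arbitrary lift of a factor of $f_1\circ f_2$, so without the paper's specific choice of $s$ and its bookkeeping the ``last chain of identifications'' you defer does not go through.
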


See Example~\ref{ex:bridged s} following the proof of Lemma~\ref{lem:bridged s} for an illustration of three maps $f_1$,$f_2$,$f_3$ satisfying the conditions of this Lemma, together with the map $\tilde{s}$ (depicted in red in Figure \ref{fig:bridging ex}).

\begin{proof}
For $k = 1,2,3$, let $t_k = t_{f_k}$.  Let $s_1$ be a radial meandering factor of $f_1$ (so that $f_1 = t_1 \circ s_1$).

To construct $\tilde{s}$, we will selectively apply Lemmas~\ref{lem:bridging I} and \ref{lem:bridging II} to the map $s = s_1 \circ f_2$ at certain contour points.  Let $0 < \alpha_1 < \alpha_2 < \ldots < \alpha_n \leq 1$ enumerate all the right contour points of $s_1 \circ f_2$, and let $0 > \beta_1 > \beta_2 > \ldots > \beta_m \geq -1$ enumerate all the left contour points of $s_1 \circ f_2$.  Also, denote $\alpha_0 = \beta_0 = 0$.

Define $B_1 \subseteq \{1,\ldots,n\}$ by the condition $i \in B_1$ if and only if $\alpha_i$ is a negative right contour point of $s_1 \circ f_2$ and $\langle x,\alpha_i \rangle$ is a negative radial departure of $s_1 \circ f_2$ for some $x \in [-1,0)$.  Define $\tilde{s}$ on $[0,1]$ by:
\[ \tilde{s}(x) = \begin{cases}
\hat{s}_{\alpha_i}(x) &\textrm{if $x \in \operatorname{dom} \hat{s}_{\alpha_i}$ for some $i \in B_1$} \\
s_1 \circ f_2(x) &\textrm{otherwise}
\end{cases} \]
where $\hat{s}_{\alpha_i}$ is the map from Lemma~\ref{lem:bridging I}.

Define $B_2 \subseteq \{1,\ldots,m\}$ by the condition $j \in B_2$ if and only if $\beta_j$ is a negative left contour point of $s_1 \circ f_2$ and there exist $x_1^{(j)},x_2^{(j)} \in [-1,1]$ with $x_1^{(j)} < 0 < x_2^{(j)}$ such that:
\begin{enumerate}[label=(\alph{*})]
\item there exists a negative radial departure $\langle w_1,w_2 \rangle$ of $t_3$ such that $t_3(w_1) = x_2^{(j)}$ and $t_3(w_2) = x_1^{(j)}$;
\item $x_2^{(j)}$ is a positive right departure of $\tilde{s}$;
\item $\tilde{s} \left( x_2^{(j)} \right) > \max \left\{ s_1 \circ f_2(x): x \in \left[ x_1^{(j)},0 \right] \right\}$;
\item $x_1^{(j)} \in [\beta_j,\beta_{j-1})$; and
\item $s_1 \circ f_2 \left( x_1^{(j)} \right) < \min \left\{ \tilde{s}(x): x \in \left[ 0,x_2^{(j)} \right] \right\}$.
\end{enumerate}

We prove in Claim~\ref{claim:B2 partner}(2) below that the conditions of Lemma~\ref{lem:bridging II} are met for each $j \in B_2$.  Taking this for granted momentarily, we complete the definition of $\tilde{s}$.  Define $\tilde{s}$ on $[-1,0]$ by:
\[ \tilde{s}(x) = \begin{cases}
\hat{s}_{\beta_j}(x) &\textrm{if $x \in \operatorname{dom} \hat{s}_{\beta_j}$ for some $j \in B_2$} \\
s_1 \circ f_2(x) &\textrm{otherwise}
\end{cases} \]
where $\hat{s}_{\beta_j}$ is the map from Lemma~\ref{lem:bridging II}.

The reader may wish to pause here and review Example~\ref{ex:bridged s} which follows the proof of Lemma~\ref{lem:bridged s}, to see the construction of this map $\tilde{s}$ carried out for some particular maps $f_1,f_2,f_3$.

\medskip

Note that by construction, $t_1 \circ \tilde{s} = f_1 \circ f_2$, and also
\[ \tilde{s}(x) \geq s_1 \circ f_2(x) \textrm{ for all $x \in [-1,1]$.} \]

The next Claim gives some information about the locations of right and left departures of $\tilde{s}$.  The first two parts, about right departures, will be used in the proof of Claim~\ref{claim:B2 partner}.  The last two parts, about left departures, involve the map $\tilde{s} {\restriction}_{[-1,0]}$ which, as mentioned above, should technically be considered only after Claim~\ref{claim:B2 partner}(2) is established.  However, because these parts are so similar, we state them together here.

\begin{claim}
\label{claim:s tilde deps}
\begin{enumerate}
\item If $x \in (0,1]$ is a positive right departure of $\tilde{s}$, then either $x$ is a positive right departure of $s_1 \circ f_2$ and $\tilde{s}(x) = s_1 \circ f_2(x)$, or $x \in (\alpha_{i-1},\alpha_i]$ for some $i \in B_1$.
\item If $x \in (0,1]$ is a negative right departure of $\tilde{s}$, then there exists $i \in \{1,\ldots,n\}$ such that $\alpha_i$ is a negative right contour point of $s_1 \circ f_2$, $i \notin B_1$, and $x \in (\alpha_{i-1},\alpha_i]$.
\item If $x \in [-1,0)$ is a positive left departure of $\tilde{s}$, then either $x$ is a positive left departure of $s_1 \circ f_2$ and $\tilde{s}(x) = s_1 \circ f_2(x)$, or $x \in [\beta_j,\beta_{j-1})$ for some $j \in B_2$.
\item If $x \in [-1,0)$ is a negative left departure of $\tilde{s}$, then there exists $j \in \{1,\ldots,m\}$ such that $\beta_j$ is a negative left contour point of $s_1 \circ f_2$, $j \notin B_2$, and $x \in [\beta_j,\beta_{j-1})$.
\end{enumerate}
\end{claim}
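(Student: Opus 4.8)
The statement splits into two mirror-image pairs. Parts (1) and (2) concern $\tilde{s}{\restriction}_{[0,1]}$, which is obtained from $s := s_1 \circ f_2$ by overwriting it with $\hat{s}_{\alpha_i}$ on $\operatorname{dom} \hat{s}_{\alpha_i}$ for each $i \in B_1$; parts (3) and (4) concern $\tilde{s}{\restriction}_{[-1,0]}$, obtained analogously from $s$ using $\hat{s}_{\beta_j}$ for $j \in B_2$. Granting Claim~\ref{claim:B2 partner}(2) (which supplies the hypothesis \ref{Y} needed to invoke Lemma~\ref{lem:bridging II}), these two constructions become formally identical after composing with $\mathsf{r}$, so it suffices to prove (1) and (2); parts (3) and (4) then follow by the verbatim mirror argument, with $\beta_j$, $B_2$, and Lemma~\ref{lem:bridging II} in place of $\alpha_i$, $B_1$, and Lemma~\ref{lem:bridging I}. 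Throughout I use the two facts recorded just before the Claim --- $\tilde{s}(x) \geq s(x)$ for every $x$, and $t_1 \circ \tilde{s} = t_1 \circ s$ --- together with the observation that, by Lemma~\ref{lem:bridging I}, each $\hat{s}_{\alpha_i}$ maps into $[0,1]$, so $\tilde{s} \geq 0$ on every $\operatorname{dom} \hat{s}_{\alpha_i}$.

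The plan is to first establish an auxiliary fact (A): \emph{if $x \in (0,1]$ is a right departure of $\tilde{s}$ with $\tilde{s}(x) = s(x)$, then $x$ is a right departure of $s$ of the same orientation}. To see this, suppose $s(z) = s(x)$ for some $z \in [0,x)$; then $\tilde{s}(z) \geq s(z) = \tilde{s}(x)$, and since $x$ is a departure of $\tilde{s}$ we must have $\tilde{s}(z) > \tilde{s}(x)$, which forces $z \in \operatorname{dom} \hat{s}_{\alpha_i}$ for some $i \in B_1$. If $\tilde{s}(x) = s(x) > 0$ then $z > 0$ and $\tilde{s}(0) = 0 < \tilde{s}(x) < \tilde{s}(z)$, so the intermediate value theorem applied to $\tilde{s}$ on $[0,z]$ yields $z' \in (0,x)$ with $\tilde{s}(z') = \tilde{s}(x)$, contradicting that $x$ is a departure of $\tilde{s}$; this settles the positive case. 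In the negative case $\tilde{s}(x) = s(x) < 0$, I would instead invoke the basic combinatorial structure of contour points that follows readily from the definition --- every departure of a non-constant piecewise-linear $g \colon [0,1] \to [-1,1]$ lies in a unique interval $(\alpha_{l-1}, \alpha_l]$ cut out by consecutive contour points of $g$ and shares the orientation of $\alpha_l$ --- to locate, inside the bridging interval around $z$, a negative right contour point $\alpha_i$ of $s$ with $s(\alpha_i) \leq s(x)$, and then track $\tilde{s}$ on the interval from $\alpha_i$ to $x$, using $\tilde{s} \geq 0$ on all bridging intervals together with the minimality built into contour points and into the function $L$ of Definition~\ref{defn:L}, to exhibit a point of $[0,x)$ at which $\tilde{s}$ equals $\tilde{s}(x)$, the desired contradiction.

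Given (A), part (2) is short: a negative right departure $x$ of $\tilde{s}$ has $\tilde{s}(x) < 0$, hence $x \notin \operatorname{dom} \hat{s}_{\alpha_i}$ for every $i \in B_1$ and $\tilde{s}(x) = s(x)$; by (A), $x$ is a negative right departure of $s$, so by the contour-point structure $x \in (\alpha_{i-1}, \alpha_i]$ for some negative right contour point $\alpha_i$ of $s$; and $i \notin B_1$, for if $i \in B_1$ then $x \in (\alpha_{i-1}, \alpha_i] \subseteq \operatorname{dom} \hat{s}_{\alpha_i}$ would force $\tilde{s}(x) = \hat{s}_{\alpha_i}(x) \geq 0$. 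For part (1): if $\tilde{s}(x) = s(x)$, then (A) gives the first alternative. Otherwise $\tilde{s}(x) > s(x)$, so $x \in \operatorname{dom} \hat{s}_{\alpha_i}$ for some $i \in B_1$ with $\tilde{s}(x) = \hat{s}_{\alpha_i}(x) > s(x) \geq 0$, and it remains to show $x \leq \alpha_i$. Supposing $x > \alpha_i$, the point $x$ lies in the ``right half'' $[\alpha_i, \alpha_{i+1}]$ of $\operatorname{dom} \hat{s}_{\alpha_i}$ (or in the tail $[\alpha_{i+1}, 1]$), on which $\hat{s}_{\alpha_i}(x)$ is a value where $\hat{s}_{\alpha_i} \neq s$, so $\tilde{s}(x) \leq L(s(\alpha_i))$ by Lemma~\ref{lem:bridging I}(4); but $\hat{s}_{\alpha_i} {\restriction}_{[\alpha_{i-1}, \alpha_i]}$ is continuous, equals $s(\alpha_{i-1}) \geq 0$ at $\alpha_{i-1}$, attains $L(s(\alpha_i))$ somewhere by Lemma~\ref{lem:bridging I}(4), and stays $\leq L(s(\alpha_i))$ where it has been modified, so its image covers the entire range into which the lifted value $\tilde{s}(x)$ falls. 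Hence $\tilde{s}(x) \in \tilde{s}([\alpha_{i-1}, \alpha_i]) \subseteq \tilde{s}([0,x))$, contradicting that $x$ is a departure of $\tilde{s}$.

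The two steps I expect to cost the most work are the negative-orientation half of (A) --- which may have to descend through several nested bridging intervals and genuinely relies on $\tilde{s}$ being non-negative there, on the minimality of contour points, and on the minimality in Definition~\ref{defn:L} --- and the final step of part (1), where showing that every value $\hat{s}_{\alpha_i}$ introduces on its ``right half'' already occurs on its ``left half'' requires opening up the explicit construction of $\hat{s}_{\alpha_i}$ through Lemma~\ref{lem:stay right} (in particular that both halves are built from the common value $y^- = s(\alpha_i)$, hence share the same $L(s(\alpha_i))$ and the same right contour points $\gamma$ of $t_1$), rather than just the four properties listed in Lemma~\ref{lem:bridging I}.
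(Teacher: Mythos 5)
Your parts (1) and (3) are fine and essentially coincide with the paper's argument (and the worry in your last paragraph is unnecessary: Lemma~\ref{lem:bridging I}(4) already asserts both that the modified values are bounded by $L(s_1\circ f_2(\alpha_i))$ and that this value is attained on $[\alpha_{i-1},\alpha_i]$, so the four listed properties suffice and Lemma~\ref{lem:stay right} need not be reopened). The genuine gap is the negative half of your auxiliary fact (A), on which your proofs of (2) and (4) entirely rest. That statement is false: a negative right departure of $\tilde{s}$ at a point where $\tilde{s}=s_1\circ f_2$ need not be a departure of $s_1\circ f_2$ at all, because bridging can hide earlier, deeper negative values of $s:=s_1\circ f_2$. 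Concretely, suppose on $[0,1]$ the map $s$ descends to $-0.5$ at a negative right contour point $\alpha_1$ with $1\in B_1$, rises to $+0.3$ at $\alpha_2$, then descends monotonically through the value $-0.4$ at a point $x$ down to a new minimum $-0.6$ at $\alpha_3$ with $3\notin B_1$ (both membership conditions are arranged by the behaviour of $s$ on $[-1,0]$ and are not obstructed by anything available at this point of the argument). Then $\tilde{s}=\hat{s}_{\alpha_1}\geq 0$ on $\operatorname{dom}\hat{s}_{\alpha_1}=[\alpha_0,\alpha_2]$ and $\tilde{s}=s$ on $(\alpha_2,x]$, so $\tilde{s}(x)=-0.4<\tilde{s}(z)$ for every $z\in[0,x)$: thus $x$ is a negative right departure of $\tilde{s}$ with $\tilde{s}(x)=s(x)$, yet $s(x)=-0.4\in s([0,x))$ because $s$ already reached $-0.5$ at $\alpha_1$. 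In particular the point of $[0,x)$ at which your sketch hopes to show $\tilde{s}$ takes the value $\tilde{s}(x)$ simply does not exist, so no amount of ``descending through nested bridging intervals'' can complete that step; meanwhile the conclusion of part (2) still holds here ($x\in(\alpha_2,\alpha_3]$, $\alpha_3$ negative, $3\notin B_1$), which is exactly why the paper's statement is phrased as a location statement rather than as ``$x$ is a departure of $s_1\circ f_2$''.

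The paper's proof of (2) is organized to avoid this trap. It first places $x$ in a window $[\alpha_{i-1},\alpha_{i+1}]$ (or $[\alpha_{i-1},1]$ for the last negative right contour point) around a negative right contour point $\alpha_i$ of $s_1\circ f_2$ --- such a window exists because $s_1\circ f_2(x)\leq\tilde{s}(x)<0$ --- then notes that $i\in B_1$ would force $\tilde{s}(x)\geq 0$, so $i\notin B_1$ and hence $\tilde{s}=s_1\circ f_2$ on the whole window, and only then uses the departure property of $\tilde{s}$, compared against $s_1\circ f_2(\alpha_i)$ (which is the running minimum of $s_1\circ f_2$ up to $\alpha_{i+1}$), to exclude $x\in(\alpha_i,\alpha_{i+1}]$; no claim about $x$ being a departure of $s_1\circ f_2$ is ever made or needed. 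You should replace (A)-negative (and its mirror, which part (4) needs) by this localization argument; your derivation of $i\notin B_1$ and your handling of (1), (3) can stand.
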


\begin{proof}[Proof of Claim~\ref{claim:s tilde deps}]
\renewcommand{\qedsymbol}{\textsquare (Claim~\ref{claim:s tilde deps})}
For (1), let $x \in (0,1]$ be a positive right departure of $\tilde{s}$.  If $\tilde{s}(x) = s_1 \circ f_2(x)$, then since $\tilde{s}(x') \geq s_1 \circ f_2(x')$ for all $x' \in [-1,1]$, it follows that $x$ is a positive right departure of $s_1 \circ f_2$ as well.  Suppose then that $\tilde{s}(x) \neq s_1 \circ f_2(x)$.  This means that $x \in \operatorname{dom} \hat{s}_{\alpha_i}$ for some $i \in B_1$.  By property (4) of Lemma~\ref{lem:bridging I}, the maximum value of $\hat{s}_{\alpha_i}(x')$ where $x' \in \operatorname{dom} \hat{s}_{\alpha_i}$ and $\hat{s}_{\alpha_i}(x') \neq s_1 \circ f_2(x')$ is achieved on the interval $[\alpha_{i-1},\alpha_i]$, which means that we must have $x \in (\alpha_{i-1},\alpha_i]$ (note that $x \neq \alpha_{i-1}$ because $\tilde{s}(\alpha_{i-1}) = s_1 \circ f_2(\alpha_{i-1})$).

For (2), let $x \in (0,1]$ be a negative right departure of $\tilde{s}$.  Let $i \in \{1,\ldots,n\}$ be such that $\alpha_i$ is a negative right contour point of $s_1 \circ f_2$, and either $i < n-1$ and $x \in [\alpha_{i-1},\alpha_{i+1}]$, or $i \geq n-1$ (i.e.\ $\alpha_i$ is the last negative right contour point of $s_1 \circ f_2$) and $x \in [\alpha_{i-1},1]$.  If it were the case that $i \in B_1$, then by construction, $x \in \operatorname{dom} \hat{s}_{\alpha_i}$, so $\tilde{s}(x) = \hat{s}_{\alpha_i}(x) \geq 0$, a contradiction since $x$ is a negative right departure of $\tilde{s}$.  Hence, $i \notin B_1$.  This means $\tilde{s}(x') = s_1 \circ f_2(x')$ for each $x' \in [\alpha_{i-1},\alpha_{i+1}]$ (or $[\alpha_{i-1},1]$ in the case that $\alpha_i$ is the last negative right contour point of $s_1 \circ f_2$), implying that $x \in (\alpha_{i-1},\alpha_i]$ since $x$ is a negative right departure of $\tilde{s}$.

The proofs of (3) and (4) are similar.
\end{proof}

We will make use of the following property several times in the proof below:
\begin{enumerate}[label=($\ast$), ref=($\ast$)]
\item \label{properly nested} If $\langle z_1,z_2 \rangle$ is a radial departure of $s_1 \circ f_2$, then for any $j \in B_2$, either $z_1 < x_1^{(j)} < 0 < x_2^{(j)} < z_2$ or $x_1^{(j)} < z_1 < 0 < z_2 < x_2^{(j)}$.
\end{enumerate}
This can be seen as follows.  It follows from Lemma~3.6 of \cite{ammerlaan-anusic-hoehn2023} that $f_2 \circ f_3$ and $f_2 \circ t_3$ have the same radial contour factor, so $f_2$ and $f_2 \circ t_3$ have the same radial contour factor.  Property \ref{properly nested} then follows from Proposition~\ref{prop:comp same dep} since any $\langle z_1,z_2 \rangle$ as in \ref{properly nested} is a radial departure of $f_2$ by Proposition~\ref{prop:comp dep}.

\begin{claim}
\label{claim:B2 partner}
For each $j \in B_2$:
\begin{enumerate}
\item There exists $k \in B_1$ such that $x_2^{(j)} \in (\alpha_{k-1},\alpha_{k+1}]$ (or $(\alpha_{k-1},1]$ in the case $k = n$) and $s_1 \circ f_2(\alpha_k) \leq s_1 \circ f_2(\beta_j)$.
\item There exists $k' \in B_1$ such that $k' \leq k$, $s_1 \circ f_2(\alpha_{k'}) \leq s_1 \circ f_2(\beta_j)$, and $s_1 \circ f_2(\alpha_{k'-1}) \leq s_1 \circ f_2(\beta_{j-1})$.
\end{enumerate}
\end{claim}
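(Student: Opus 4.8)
The plan is to establish the two parts in sequence, using the property $(\ast)$ and the structure of $\tilde s$ recorded in Claim~\ref{claim:s tilde deps} as the main tools. Fix $j \in B_2$, and let $x_1^{(j)}, x_2^{(j)}$ be the associated points from the definition of $B_2$, so in particular $x_2^{(j)}$ is a positive right departure of $\tilde s$ (condition~(b)) and $x_2^{(j)} > \max\{s_1 \circ f_2(x) : x \in [x_1^{(j)},0]\} \geq 0$ by condition~(c) together with $s_1 \circ f_2(0) = 0$. First I would apply Claim~\ref{claim:s tilde deps}(1) to $x = x_2^{(j)}$: either $x_2^{(j)}$ is a positive right departure of $s_1 \circ f_2$ with $\tilde s(x_2^{(j)}) = s_1 \circ f_2(x_2^{(j)})$, or $x_2^{(j)} \in (\alpha_{k-1}, \alpha_k]$ for some $k \in B_1$. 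The first alternative must be ruled out: if $x_2^{(j)}$ were a positive right departure of $s_1 \circ f_2$, then — recalling from the definition of $B_2$(a) that $\langle w_1, w_2 \rangle$ is a negative radial departure of $t_3$ with $t_3(w_1) = x_2^{(j)}$, $t_3(w_2) = x_1^{(j)}$, and using $(\ast)$ applied to a suitable radial departure of $s_1 \circ f_2$ obtained from $x_2^{(j)}$ — we would contradict the "properly nested" relationship, or more directly we can use condition (c) to see that $s_1\circ f_2$ is smaller than $\tilde s(x_2^{(j)})$ on $[x_1^{(j)},0]$ so that $\langle x_1^{(j)}, x_2^{(j)}\rangle$ behaves like a radial departure incompatible with $(\ast)$. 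So $x_2^{(j)} \in (\alpha_{k-1}, \alpha_k]$ for some $k \in B_1$, and I need to extend this to the interval $(\alpha_{k-1}, \alpha_{k+1}]$ (or $(\alpha_{k-1},1]$ if $k = n$), which is immediate since $\operatorname{dom}\hat s_{\alpha_k}$ is exactly that interval. It then remains to show $s_1 \circ f_2(\alpha_k) \leq s_1 \circ f_2(\beta_j)$: since $\alpha_k$ is a negative right contour point of $s_1\circ f_2$, $\langle x_1^{(j)}, \alpha_k\rangle$ (after possibly adjusting $x_1^{(j)}$ to a genuine left departure near $\beta_j$) should be comparable via $(\ast)$ to the radial departure witnessing $\beta_j \in$ (the analog of $B_1$ on the negative side), and the nesting forces $s_1\circ f_2(\alpha_k) \leq s_1\circ f_2(\beta_j)$.

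For part~(2), I would argue by descending through the positive right contour points: among all $k' \in B_1$ with $k' \leq k$ and $s_1\circ f_2(\alpha_{k'}) \leq s_1\circ f_2(\beta_j)$ (the set is nonempty by part~(1), containing $k$), I would try to take the minimal such $k'$, or rather the one that also controls the value at $\alpha_{k'-1}$. The key point is to show $s_1 \circ f_2(\alpha_{k'-1}) \leq s_1 \circ f_2(\beta_{j-1})$. If not, then $s_1\circ f_2(\alpha_{k'-1}) > s_1\circ f_2(\beta_{j-1}) \geq 0$ (recall $\alpha_{k'-1}$ is a positive right contour point or $0$, and $\beta_{j-1}$ a positive left contour point or $0$). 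I would examine $x_1^{(j)} \in [\beta_j, \beta_{j-1})$ from condition~(d): on $[0, x_2^{(j)}]$, $\tilde s$ stays above $s_1\circ f_2(x_1^{(j)})$ by condition~(e), and I want to derive a radial departure of $s_1 \circ f_2$ of the form $\langle x_1^{(j)}, x' \rangle$ with $x' \in (\alpha_{k'-1}, \alpha_{k'}]$, which would violate $(\ast)$ relative to some $j' \in B_2$ or relative to $j$ itself by improper nesting with the pair $\langle x_1^{(j)}, x_2^{(j)} \rangle$. More carefully: the inequality $s_1\circ f_2(\alpha_{k'-1}) > s_1\circ f_2(\beta_{j-1})$ would mean the value at $\alpha_{k'-1}$ is "too high", and combined with Proposition~\ref{prop:comp same dep} and Proposition~\ref{prop:alt dep nested} applied to $f_2$ (using $t_{f_2} = t_{f_2\circ f_3}$), this produces a radial departure of $t_3$ or of $s_1\circ f_2$ that is neither strictly inside nor strictly outside $\langle x_1^{(j)}, x_2^{(j)}\rangle$, contradicting $(\ast)$. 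So such a $k'$ exists.

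The main obstacle I anticipate is the bookkeeping in part~(1) needed to rule out the alternative "$x_2^{(j)}$ is a positive right departure of $s_1\circ f_2$": this requires carefully converting the one-sided departure data into a genuine radial departure $\langle z_1, z_2 \rangle$ of $s_1 \circ f_2$ (using condition (c) to find $z_1 \in [x_1^{(j)}, 0)$ with $s_1\circ f_2(z_1)$ appropriately negative and $z_2$ near $x_2^{(j)}$) so that $(\ast)$ can be invoked, and then checking that the conclusion of $(\ast)$ is genuinely violated — i.e., that $\langle z_1, z_2\rangle$ is neither strictly inside nor strictly outside $\langle x_1^{(j)}, x_2^{(j)}\rangle$. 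A secondary subtlety is that $x_1^{(j)}$ is only assumed to lie in $[\beta_j, \beta_{j-1})$, not to be a left contour point itself, so throughout I will need to pass between $x_1^{(j)}$ and nearby genuine left departures/contour points of $s_1\circ f_2$ and track the orientations correctly; I expect this is routine but delicate, and Proposition~\ref{prop:alt dep nested} (nesting of oppositely-oriented radial departures) will be the workhorse for these passages.
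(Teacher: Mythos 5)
Your outline has the same general shape as the paper's argument (locate $x_2^{(j)}$ via Claim~\ref{claim:s tilde deps}(1), then force both inequalities by exhibiting radial departures of $s_1\circ f_2$ that violate \ref{properly nested}), but the steps that carry all the weight are asserted rather than proved, and the routes you sketch for them do not work as stated. First, you propose to rule out the first alternative of Claim~\ref{claim:s tilde deps}(1) outright, ``more directly'' because condition (c) makes $\langle x_1^{(j)},x_2^{(j)}\rangle$ behave like a radial departure incompatible with \ref{properly nested}. But (c) and (e) bound $s_1\circ f_2$ only on $[x_1^{(j)},0]$ and bound $\tilde s$ (not $s_1\circ f_2$) on $[0,x_2^{(j)}]$; they do not prevent $s_1\circ f_2$ from dipping to or below $s_1\circ f_2(x_1^{(j)})$ inside $(0,x_2^{(j)})$ where $\tilde s$ has been lifted, and $x_1^{(j)}$ need not be a left departure at all, so no radial departure of $s_1\circ f_2$ comes for free here. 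In fact the paper never rules this alternative out: it shows only that the index $k$ with $x_2^{(j)}\in(\alpha_{k-1},\alpha_{k+1}]$ and $\alpha_k$ a negative right contour point lies in $B_1$, and later in the same proof it works precisely in the situation where $x_2^{(j)}$ \emph{is} a positive right departure of $s_1\circ f_2$ with $\tilde s(x_2^{(j)})=s_1\circ f_2(x_2^{(j)})$, deriving a contradiction only from the additional hypothesis $s_1\circ f_2(\alpha_k)>s_1\circ f_2(\beta_j)$.

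Second, the inequality $s_1\circ f_2(\alpha_k)\le s_1\circ f_2(\beta_j)$, which is the real content of part (1), is justified in your sketch by nesting against ``the radial departure witnessing $\beta_j\in$ (the analog of $B_1$ on the negative side)''; but $B_2$ is not a mirror of $B_1$, and $j\in B_2$ supplies no radial departure of $s_1\circ f_2$ terminating at $\beta_j$ --- only the data (a)--(e) involving $t_3$ and $\tilde s$. What is actually needed (and what the paper does) is: take the negative radial departure $\langle x,\alpha_k\rangle$ furnished by $k\in B_1$, show under the contradiction hypothesis that $x\ge\beta_{j-1}>x_1^{(j)}$, apply \ref{properly nested} to force $x_2^{(j)}>\alpha_k$, conclude via Claim~\ref{claim:s tilde deps}(1) that $\tilde s(x_2^{(j)})=s_1\circ f_2(x_2^{(j)})$, and then verify from the contour-point structure (using (c) and the fact that $\alpha_k$, $\beta_{j-1}$ are the adjacent contour points) that $\langle\beta_j,x_2^{(j)}\rangle$ is a positive radial departure of $s_1\circ f_2$, contradicting \ref{properly nested}. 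None of this appears in your plan. Likewise for part (2): minimality of $k'$ must be taken over \emph{all} indices with $s_1\circ f_2(\alpha_{k'})\le s_1\circ f_2(\beta_j)$ --- that is exactly what makes $\langle\beta_j,\alpha_{k'-1}\rangle$ a radial departure if $s_1\circ f_2(\alpha_{k'-1})>s_1\circ f_2(\beta_{j-1})$ --- and one must then prove that this minimal index lies in $B_1$, which uses condition (e) to see $\tilde s(\alpha_{k'})\neq s_1\circ f_2(\alpha_{k'})$ when $k'<k$; your version (minimizing within $B_1$, and aiming for a departure $\langle x_1^{(j)},x'\rangle$ whose left endpoint need not be a departure) skips both points. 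So the proposal is a plausible plan with the right tools, but the decisive constructions are missing or misdirected.
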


\begin{proof}[Proof of Claim~\ref{claim:B2 partner}]
\renewcommand{\qedsymbol}{\textsquare (Claim~\ref{claim:B2 partner})}
Let $j \in B_2$, and for simplicity denote $x_1 = x_1^{(j)}$ and $x_2 = x_2^{(j)}$.

First we point out that it cannot be the case that $\alpha_1$ is a positive right contour point of $s_1 \circ f_2$ and $x_2 \in [0,\alpha_1]$.  This is because if this were the case, then taking any negative left departure $x \in [x_1,0]$ of $s_1 \circ f_2$, we would have that $\langle x,x_2 \rangle$ is a (positive) radial departure of $s_1 \circ f_2$, contradicting \ref{properly nested} (with $z_1 = x$, $z_2 = x_2$).  Also, it cannot be the case that $\alpha_n$ is a positive right contour point of $s_1 \circ f_2$ and $x_2 \in (\alpha_n,1]$, since such a value $x_2$ cannot be a right departure of $\tilde{s}$, according to Claim~\ref{claim:s tilde deps}(1).

Choose $k \in \{1,\ldots,n\}$ such that $\alpha_k$ is a negative right contour point of $s_1 \circ f_2$ and $x_2 \in (\alpha_{k-1},\alpha_{k+1}]$ (or $x_2 \in (\alpha_{k-1},1]$ in the case that $k = n$).  We claim that $k \in B_1$.  Indeed, if this were not the case, then by Claim~\ref{claim:s tilde deps}(1), $x_2$ would be a positive right departure of $s_1 \circ f_2$ and $\tilde{s}(x_2) = s_1 \circ f_2(x_2)$.  Taking the negative left departure $x_1' \in [x_1,0]$ of $s_1 \circ f_2$ with $s_1 \circ f_2(x_1') = s_1 \circ f_2(x_1)$, we would have that $\langle x_1',x_2 \rangle$ is a (positive) radial departure of $s_1 \circ f_2$ by the conditions (c) and (e) from the definition of $B_2$, contradicting \ref{properly nested} (with $z_1 = x_1'$, $z_2 = x_2$).

Since $k \in B_1$, there exists $x \in [-1,0)$ such that $\langle x,\alpha_k \rangle$ is a negative radial departure of $s_1 \circ f_2$.  Suppose for a contradiction that $s_1 \circ f_2(\alpha_k) > s_1 \circ f_2(\beta_j)$.  Then we must have $x \geq \beta_{j-1} > x_1$, and so by \ref{properly nested} (with $z_1 = x$, $z_2 = \alpha_k$) we deduce that $x_2 > \alpha_k$.  By Claim~\ref{claim:s tilde deps}(1), this means that $x_2$ is a positive right departure of $s_1 \circ f_2$ and $\tilde{s}(x_2) = s_1 \circ f_2(x_2)$.  But then we claim that $\langle \beta_j,x_2 \rangle$ is a (positive) radial departure of $s_1 \circ f_2$.  Indeed, for any $p \in [0,x_2)$, $s_1 \circ f_2(p) < s_1 \circ f_2(x_2)$ since $x_2$ is a positive right departure of $s_1 \circ f_2$ and
\begin{align*}
s_1 \circ f_2(p) &\geq s_1 \circ f_2(\alpha_k) && \textrm{since $\alpha_k$ is the last negative right contour} \\
& && \textrm{point of $s_1 \circ f_2$ before $x_2$} \\
&> s_1 \circ f_2(\beta_j) ,
\end{align*}
and for any $p \in (\beta_j,0]$,
\begin{align*}
s_1 \circ f_2(p) &\leq s_1 \circ f_2(\beta_{j-1}) && \textrm{since $\beta_{j-1}$ is the last (moving to the left from $0$)} \\
& && \textrm{positive left contour point of $s_1 \circ f_2$ before $\beta_j$} \\
&< \tilde{s}(x_2) && \textrm{by condition (c) from the definition of $B_2$} \\
&= s_1 \circ f_2(x_2) ,
\end{align*}
and $s_1 \circ f_2(p) > s_1 \circ f_2(\beta_j)$ since $\beta_j$ is a negative left departure of $s_1 \circ f_2$.  This is a contradiction by \ref{properly nested} (with $z_1 = \beta_j$, $z_2 = x_2$).  Therefore, $s_1 \circ f_2(\alpha_k) \leq s_1 \circ f_2(\beta_j)$.  This proves (1).

For (2), let $k' \in \{1,\ldots,n\}$ be minimal such that $s_1 \circ f_2(\alpha_{k'}) \leq s_1 \circ f_2(\beta_j)$.  Clearly $k'$ exists and $k' \leq k$.  Also, $k' \in B_1$ since either $k' = k \in B_1$, or $k' < k$, in which case $\alpha_{k'} < x_2$ and so $\tilde{s}(\alpha_{k'}) > s_1 \circ f_2(x_1) \geq s_1 \circ f_2(\beta_j)$, meaning $\tilde{s}(\alpha_{k'}) \neq s_1 \circ f_2(\alpha_{k'})$.  If it were the case that $s_1 \circ f_2(\alpha_{k'-1}) > s_1 \circ f_2(\beta_{j-1})$, then $\langle \beta_j,\alpha_{k'-1} \rangle$ would be a (positive) radial departure of $s_1 \circ f_2$, contradicting \ref{properly nested} (with $z_1 = \beta_j$, $z_2 = \alpha_{k'-1}$).  Therefore, $s_1 \circ f_2(\alpha_{k'-1}) \leq s_1 \circ f_2(\beta_{j-1})$, as desired.
\end{proof}

To complete the proof of Lemma~\ref{lem:bridged s}, it remains to prove that $\tilde{s} \circ t_3$ has no negative radial departures.  According to Proposition~\ref{prop:comp dep}, any negative radial departure of $\tilde{s} \circ t_3$ must come from a negative radial departure of $t_3$ mapping onto a positive radial departure of $\tilde{s}$, or from a positive radial departure of $t_3$ mapping onto a negative radial departure of $\tilde{s}$.  We rule out these two possibilities in Claims~\ref{claim:neg pos} and \ref{claim:pos neg} below.

\begin{claim}
\label{claim:neg pos}
For any negative radial departure $\langle w_1,w_2 \rangle$ of $t_3$, $\langle t_3(w_2),t_3(w_1) \rangle$ is not a positive radial departure of $\tilde{s}$.
\end{claim}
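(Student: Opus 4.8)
The plan is to argue by contradiction. Suppose $\langle w_1,w_2 \rangle$ is a negative radial departure of $t_3$ and, setting $x_1 = t_3(w_2)$ and $x_2 = t_3(w_1)$, that $\langle x_1,x_2 \rangle$ is a positive radial departure of $\tilde{s}$; in particular $-1 \le x_1 < 0 < x_2 \le 1$. First I would unpack the defining property $\tilde{s}((x_1,x_2)) = (\tilde{s}(x_1),\tilde{s}(x_2))$: since $0 \in (x_1,x_2)$ and $\tilde{s}(0) = 0$, positive orientation forces $\tilde{s}(x_1) < 0 < \tilde{s}(x_2)$, and for every $x \in (x_1,x_2)$ we have $\tilde{s}(x_1) < \tilde{s}(x) < \tilde{s}(x_2)$. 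Consequently $x_2$ is a positive right departure of $\tilde{s}$ (as $[0,x_2) \subseteq (x_1,x_2)$), and $x_1$ is a negative left departure of $\tilde{s}$ (as $(x_1,0] \subseteq (x_1,x_2)$).

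Next I would feed the negative left departure $x_1$ of $\tilde{s}$ into Claim~\ref{claim:s tilde deps}(4), which yields an index $j \in \{1,\ldots,m\}$ with $\beta_j$ a negative left contour point of $s_1 \circ f_2$, $x_1 \in [\beta_j,\beta_{j-1})$, and crucially $j \notin B_2$. The goal is then to contradict $j \notin B_2$ by checking that $j$ meets every condition in the definition of $B_2$, using $x_1^{(j)} := x_1$ and $x_2^{(j)} := x_2$ as the witnessing points. Condition (a) is witnessed by the very departure $\langle w_1,w_2 \rangle$, since $t_3(w_1) = x_2$ and $t_3(w_2) = x_1$ by construction; condition (b) is the statement that $x_2$ is a positive right departure of $\tilde{s}$, established above; and condition (d) is $x_1 \in [\beta_j,\beta_{j-1})$, supplied by Claim~\ref{claim:s tilde deps}(4).

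What remains is conditions (c) and (e), and this is the only point where a short computation is required. For (c), I would combine the pointwise bound $\tilde{s} \ge s_1 \circ f_2$ with the inequalities just derived: for $x \in (x_1,0]$ we get $s_1 \circ f_2(x) \le \tilde{s}(x) < \tilde{s}(x_2)$, and at $x = x_1$ we get $s_1 \circ f_2(x_1) \le \tilde{s}(x_1) < 0 < \tilde{s}(x_2)$, so $s_1 \circ f_2(x) < \tilde{s}(x_2)$ throughout the compact interval $[x_1,0]$ and hence $\max\{s_1 \circ f_2(x) : x \in [x_1,0]\} < \tilde{s}(x_2)$. For (e), I would use that for $x \in [0,x_2)$ (which lies in $(x_1,x_2)$) we have $\tilde{s}(x) > \tilde{s}(x_1)$, together with $\tilde{s}(x_2) > 0 > \tilde{s}(x_1)$ and $\tilde{s}(x_1) \ge s_1 \circ f_2(x_1)$, to conclude $\min\{\tilde{s}(x) : x \in [0,x_2]\} > \tilde{s}(x_1) \ge s_1 \circ f_2(x_1)$. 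With (a)--(e) in hand, $\beta_j$ being a negative left contour point of $s_1 \circ f_2$ forces $j \in B_2$, contradicting $j \notin B_2$. The main obstacle, such as it is, is purely bookkeeping: the defining equation of a positive radial departure only controls $\tilde{s}$ on the open interval $(x_1,x_2)$, so one must handle the endpoints $x_1$, $0$, and $x_2$ of the intervals appearing in (c) and (e) separately, using the sign information $\tilde{s}(x_1) < 0 < \tilde{s}(x_2)$ and $\tilde{s}(0) = 0$.
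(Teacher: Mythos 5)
Your argument is correct and follows essentially the same route as the paper: apply Claim~\ref{claim:s tilde deps}(4) to the negative left departure $x_1 = t_3(w_2)$ of $\tilde{s}$ to get $j \notin B_2$ with $x_1 \in [\beta_j,\beta_{j-1})$, then verify conditions (a)--(e) of the definition of $B_2$ with witnesses $x_1^{(j)} = x_1$, $x_2^{(j)} = x_2$, contradicting $j \notin B_2$. Your endpoint bookkeeping for (c) and (e) is fine (and using only the pointwise bound $\tilde{s} \geq s_1 \circ f_2$ at $x_1$, rather than the equality $\tilde{s}(x_1) = s_1 \circ f_2(x_1)$ invoked in the paper, is a harmless simplification).
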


\begin{proof}[Proof of Claim~\ref{claim:neg pos}]
\renewcommand{\qedsymbol}{\textsquare (Claim~\ref{claim:neg pos})}
Let $\langle w_1,w_2 \rangle$ be a negative radial departure of $t_3$, let $x_1 = t_3(w_2)$ and $x_2 = t_3(w_1)$, and suppose for a contradiction that $\langle x_1,x_2 \rangle$ is a positive radial departure of $\tilde{s}$.

By Claim~\ref{claim:s tilde deps}(4), there exists $j \in \{1,\ldots,m\}$ such that $\beta_j$ is a negative left contour point of $s_1 \circ f_2$, $j \notin B_2$, and $x_1 \in [\beta_j,\beta_{j-1})$.  Furthermore, $x_2$ is a positive right departure of $\tilde{s}$, $\tilde{s}(x_2) > \max \{\tilde{s}(x): x \in [x_1,0]\} \geq \max \{s_1 \circ f_2(x): x \in [x_1,0]\}$, and $\tilde{s}(x_1) = s_1 \circ f_2(x_1) < \min \{\tilde{s}(x): x \in [0,x_2]\}$.  Thus the conditions (a)--(e) of the definition of $B_2$ are satisfied for $x_1^{(j)} = x_1$ and $x_2^{(2)} = x_2$, so we conclude that $j \in B_2$, a contradiction.
\end{proof}

In the next Claim we will make use of the following property, which follows from Proposition~\ref{prop:alt dep nested} (see the comment immediately following Proposition~\ref{prop:alt dep nested}).
\begin{enumerate}[label=($\ast\ast$), ref=($\ast\ast$)]
\item \label{properly nested 2} If $\langle v_1,v_2 \rangle$ is a positive radial departure of $t_3$, then for any $j \in B_2$, either $t_3(v_1) < x_1^{(j)} < 0 < x_2^{(j)} < t_3(v_2)$ or $x_1^{(j)} < t_3(v_1) < 0 < t_3(v_2) < x_2^{(j)}$.
\end{enumerate}

\begin{claim}
\label{claim:pos neg}
For any positive radial departure $\langle v_1,v_2 \rangle$ of $t_3$, $\langle t_3(v_1),t_3(v_2) \rangle$ is not a negative radial departure of $\tilde{s}$.
\end{claim}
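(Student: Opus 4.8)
The plan is to argue by contradiction, in the spirit of the proof of Claim~\ref{claim:neg pos}, but this time the contradiction will come from showing that a certain index $i$ lies in $B_1$ while $i \notin B_1$.

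So suppose $\langle v_1,v_2 \rangle$ is a positive radial departure of $t_3$ and, writing $x_1 = t_3(v_1)$ and $x_2 = t_3(v_2)$, that $\langle x_1,x_2 \rangle$ is a negative radial departure of $\tilde{s}$. Since $\langle v_1,v_2 \rangle$ is positive we have $x_1 < 0 < x_2$, and since $\langle x_1,x_2 \rangle$ is a negative radial departure of $\tilde{s}$ we have $\tilde{s}(x_1) > 0 > \tilde{s}(x_2)$, with $x_1$ a positive left departure and $x_2$ a negative right departure of $\tilde{s}$; moreover $\tilde{s}(x_2) = s_1 \circ f_2(x_2)$ since $\tilde{s}$ is non-negative wherever it differs from $s_1 \circ f_2$ on $[0,1]$. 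First I would apply Claim~\ref{claim:s tilde deps}(2) to the negative right departure $x_2$ to get $i \notin B_1$ with $\alpha_i$ a negative right contour point of $s_1 \circ f_2$ and $x_2 \in (\alpha_{i-1},\alpha_i]$. The right contour points neighbouring $\alpha_i$ have positive orientation, hence are not in $B_1$, and as $i \notin B_1$ no domain $\operatorname{dom} \hat{s}_{\alpha_{i'}}$ with $i' \in B_1$ contains a point of $(\alpha_{i-1},\alpha_i)$; so $\tilde{s} = s_1 \circ f_2$ on $[\alpha_{i-1},\alpha_i]$, and in particular $\tilde{s}(\alpha_i) = s_1 \circ f_2(\alpha_i) = \min s_1 \circ f_2([0,\alpha_i]) \leq \tilde{s}(x_2) < 0$. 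Using that $s_1 \circ f_2$ has only finitely many departures, one checks that $s_1 \circ f_2$ does not overshoot between the consecutive right contour points $\alpha_{i-1}$ and $\alpha_i$, so $\max s_1 \circ f_2([0,\alpha_i]) = s_1 \circ f_2(\alpha_{i-1})$; and since $\alpha_{i-1} \in (x_1,x_2)$, the negative radial departure $\langle x_1,x_2 \rangle$ of $\tilde{s}$ yields $s_1 \circ f_2(\alpha_{i-1}) \leq \tilde{s}(\alpha_{i-1}) < \tilde{s}(x_1)$.

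The key step is to upgrade $\langle x_1,x_2 \rangle$ to $\langle x_1,\alpha_i \rangle$: I claim $\langle x_1,\alpha_i \rangle$ is a negative radial departure of $\tilde{s}$. For $y \in (x_1,\alpha_i)$, if $y \in (x_1,\alpha_{i-1}]$ then $y \in (x_1,x_2)$ so $\tilde{s}(\alpha_i) \leq \tilde{s}(x_2) < \tilde{s}(y) < \tilde{s}(x_1)$ from the departure $\langle x_1,x_2 \rangle$; and if $y \in [\alpha_{i-1},\alpha_i)$ then $\tilde{s}(y) = s_1 \circ f_2(y)$ lies in $(s_1 \circ f_2(\alpha_i),\, s_1 \circ f_2(\alpha_{i-1})] \subseteq (\tilde{s}(\alpha_i),\tilde{s}(x_1))$ by the previous paragraph. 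So $\tilde{s}$ maps $(x_1,\alpha_i)$ into $(\tilde{s}(\alpha_i),\tilde{s}(x_1))$, and by continuity, with one-sided limits $\tilde{s}(x_1)$ and $\tilde{s}(\alpha_i)$ at the endpoints, the image is exactly $(\tilde{s}(\alpha_i),\tilde{s}(x_1))$; hence $\langle x_1,\alpha_i \rangle$ is a negative radial departure of $\tilde{s}$.

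It remains to transfer this to $s_1 \circ f_2$: the goal is to produce $x^{\ast} \in [-1,0)$ with $\langle x^{\ast},\alpha_i \rangle$ a negative radial departure of $s_1 \circ f_2$, contradicting $i \notin B_1$. Since $\tilde{s} = s_1 \circ f_2$ on $[\alpha_{i-1},\alpha_i]$ and $s_1 \circ f_2 \leq \tilde{s} < \tilde{s}(x_1)$ on $(x_1,\alpha_i)$, the only obstructions are that on $(x_1,\alpha_{i-1})$ the map $s_1 \circ f_2$ could dip below $s_1 \circ f_2(\alpha_i)$ or exceed the value to be taken at $x^{\ast}$; both occur only on bridged intervals $\operatorname{dom} \hat{s}_{\beta_{j'}}$ $(j' \in B_2)$ or $\operatorname{dom} \hat{s}_{\alpha_{i'}}$ $(i' \in B_1)$. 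Here I would split according to the two cases of Claim~\ref{claim:s tilde deps}(3) for $x_1$. If $x_1$ is itself a positive left departure of $s_1 \circ f_2$ with $\tilde{s}(x_1) = s_1 \circ f_2(x_1)$, one takes $x^{\ast} = x_1$, or, if $s_1 \circ f_2$ dips below $s_1 \circ f_2(\alpha_i)$ on $(x_1,0)$, the right endpoint of the last such dip (which lies in some $\operatorname{dom} \hat{s}_{\beta_{j'}}$), controlling it via Claim~\ref{claim:B2 partner} and property~\ref{properly nested}. If instead $x_1 \in [\beta_j,\beta_{j-1})$ with $j \in B_2$, one first uses property~\ref{properly nested 2} to locate $\langle v_1,v_2 \rangle$ relative to $\langle x_1^{(j)},x_2^{(j)} \rangle$, and then combines the defining conditions (a)--(e) of $B_2$, Claim~\ref{claim:B2 partner}, and the properties of $\hat{s}_{\beta_j}$ from Lemma~\ref{lem:bridging II} (notably that $\hat{s}_{\beta_j}$ agrees with $s_1 \circ f_2$ at its contour-point endpoints and has maximal bridged value $L(s_1 \circ f_2(\beta_j))$) to choose $x^{\ast}$ and verify the departure property. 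The hard part will be this transfer step — above all the second case, where $x_1$ itself sits inside a bridged interval.
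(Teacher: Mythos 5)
Your reduction and the first ``upgrade'' step are correct and consistent with what the paper does (much of it implicitly): getting $i \notin B_1$ with $x_2 \in (\alpha_{i-1},\alpha_i]$ from Claim~\ref{claim:s tilde deps}(2), observing $\tilde{s} = s_1 \circ f_2$ on $[\alpha_{i-1},\alpha_i]$, and concluding that $\langle x_1,\alpha_i \rangle$ is a negative radial departure of $\tilde{s}$ are all fine. The genuine gap is the ``transfer step'', which is precisely where all the content of this claim lives, and your plan for it is not just unfinished but aimed in a direction that cannot be carried out as stated. Your goal is to exhibit $x^{\ast}$ with $\langle x^{\ast},\alpha_i \rangle$ a negative radial departure of $s_1 \circ f_2$; but in the problematic scenario --- when $s_1 \circ f_2$ dips to a value $\leq s_1 \circ f_2(\alpha_i)$ somewhere in $(x_1,0)$ --- no such $x^{\ast}$ need exist at all, and none can be manufactured from ``the right endpoint of the last dip'': a point just right of the dip has value near $s_1 \circ f_2(\alpha_i) < 0$, and every point further right is capped by $s_1 \circ f_2(\beta_{j-1})$, which need not exceed $s_1 \circ f_2(\alpha_{i-1}) = \max s_1 \circ f_2([0,\alpha_i])$. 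In this scenario the contradiction is not obtained by producing $x^{\ast}$; instead one shows the dip forces a negative left \emph{contour point} $\beta_j \in (x_1,0)$ with $s_1 \circ f_2(\beta_j) \leq s_1 \circ f_2(\alpha_i)$, that $j \in B_2$ (else $\tilde{s}(\beta_j) \leq \tilde{s}(x_2)$, contradicting the departure of $\tilde{s}$), and then Claim~\ref{claim:B2 partner}(1) yields $k \in B_1$ with $s_1 \circ f_2(\alpha_k) \leq s_1 \circ f_2(\beta_j) \leq s_1 \circ f_2(\alpha_i)$, forcing $i < k$ and hence $x_2 \leq \alpha_i < \alpha_{k-1} < x_2^{(j)}$ while $x_1 < \beta_j \leq x_1^{(j)}$ --- a violation of \ref{properly nested 2} applied to $\langle v_1,v_2 \rangle$. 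Note it is \ref{properly nested 2}, not \ref{properly nested} as you cite in your first case: \ref{properly nested} requires a radial departure of $s_1 \circ f_2$, which is exactly what you do not have at this point.

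Your second case has a further missing idea. Before $\beta_{j-1}$ (or any point of $[\beta_{j-1},0)$) can play the role of $x_1$, one needs $s_1 \circ f_2(\alpha_{i-1}) < s_1 \circ f_2(\beta_{j-1})$, and this inequality is not routine: the paper derives it by combining $\tilde{s}(x_1) \leq L(s_1 \circ f_2(\beta_j))$ (Lemma~\ref{lem:bridging II}(4)) with Claim~\ref{claim:B2 partner}(2), Lemma~\ref{lem:bridging I}(4) and the monotonicity of $L$, to produce a point $x' \in (0,x_2)$ with $\tilde{s}(x') \geq \tilde{s}(x_1)$, contradicting the departure $\langle x_1,x_2 \rangle$ of $\tilde{s}$. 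Nothing in your sketch identifies this inequality or supplies an argument for it, and the subsequent ``mimic Case 1 with $\beta_{j-1}$ in place of $x_1$'' step (finding $j' \in B_2$ with $j' < j$ and again invoking Claim~\ref{claim:B2 partner}(1) and \ref{properly nested 2}) is likewise absent. So while your preliminary analysis is sound, the proof of the claim is essentially not there: the cases you defer are the whole point, and the device you propose for them ($x^{\ast}$) is not how the contradiction can be reached when the obstructions come from bridged intervals indexed by $B_2$.
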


\begin{proof}[Proof of Claim~\ref{claim:pos neg}]
\renewcommand{\qedsymbol}{\textsquare (Claim~\ref{claim:pos neg})}
Let $\langle v_1,v_2 \rangle$ be a positive radial departure of $t_3$, let $x_1 = t_3(v_1)$ and $x_2 = t_3(v_2)$, and suppose for a contradiction that $\langle x_1,x_2 \rangle$ is a negative radial departure of $\tilde{s}$.  By Claim~\ref{claim:s tilde deps}(2), there exists $i \in \{1,\ldots,n\}$ such that $\alpha_i$ is a negative right contour point of $s_1 \circ f_2$, $i \notin B_1$, and $x_2 \in (\alpha_{i-1},\alpha_i]$.

\medskip
\noindent
\textbf{Case~1:} Suppose $\tilde{s}(x_1) = s_1 \circ f_2(x_1)$.

Then $s_1 \circ f_2(x_1) > s_1 \circ f_2(x)$ for all $x \in (x_1,x_2]$.  Since $i \notin B_1$, we know that $\langle x_1,\alpha_i \rangle$ is not a negative radial departure of $s_1 \circ f_2$, meaning that there exists a negative left contour point $\beta_j \in (x_1,0)$ such that $s_1 \circ f_2(\beta_j) \leq s_1 \circ f_2(\alpha_i)$.  Note that $j \in B_2$, since otherwise we would have $\tilde{s}(\beta_j) = s_1 \circ f_2(\beta_j) \leq s_1 \circ f_2(\alpha_i) \leq s_1 \circ f_2(x_2) = \tilde{s}(x_2)$, contradicting the assumption that $\langle x_1,x_2 \rangle$ is a negative radial departure of $\tilde{s}$.  By Claim~\ref{claim:B2 partner}(1), there exists $k \in B_1$ such that $\alpha_{k-1} < x_2^{(j)}$ and $s_1 \circ f_2(\alpha_k) \leq s_1 \circ f_2(\beta_j)$.  Since $s_1 \circ f_2(\alpha_k) \leq s_1 \circ f_2(\beta_j) \leq s_1 \circ f_2(\alpha_i)$, and $i \neq k$ since $i \notin B_1$ but $k \in B_1$, we conclude that $i < k$.  Now $x_1 < \beta_j \leq x_1^{(j)}$ and $x_2 \leq \alpha_i < \alpha_{k-1} < x_2^{(j)}$, contradicting \ref{properly nested 2}.

\medskip
\noindent
\textbf{Case~2:} Suppose $\tilde{s}(x_1) \neq s_1 \circ f_2(x_1)$.

This means that $x_1 \in [\beta_j,\beta_{j-1})$ for some $j \in B_2$, by Claim~\ref{claim:s tilde deps}(3).  Also, $\tilde{s}(x_1) \leq L(s_1 \circ f_2(\beta_j))$ by Lemma~\ref{lem:bridging II}(4).

We claim that $s_1 \circ f_2(\alpha_{i-1}) < s_1 \circ f_2(\beta_{j-1})$.  Indeed, suppose for a contradiction that $s_1 \circ f_2(\alpha_{i-1}) \geq s_1 \circ f_2(\beta_{j-1})$.  According to Claim~\ref{claim:B2 partner}(2), there exists $k' \in B_1$ such that $s_1 \circ f_2(\alpha_{k'}) \leq s_1 \circ f_2(\beta_j)$, and $s_1 \circ f_2(\alpha_{k'-1}) \leq s_1 \circ f_2(\beta_{j-1})$.  Observe that $k' \leq i$ since $s_1 \circ f_2(\alpha_{k'-1}) \leq s_1 \circ f_2(\beta_{j-1}) \leq s_1 \circ f_2(\alpha_{i-1})$, and in fact $k' < i$ since $k' \in B_1$ and $i \notin B_1$.  By Lemma~\ref{lem:bridging I}(4), there exists $x' \in [\alpha_{k'-1},\alpha_{k'}]$ such that
\begin{align*}
\tilde{s}(x') &= L(s_1 \circ f_2(\alpha_{k'})) \\
& \geq L(s_1 \circ f_2(\beta_j)) \quad \textrm{by monotonicity of $L$} \\
&\geq \tilde{s}(x_1) .
\end{align*}
But this is a contradiction since $\langle x_1,x_2 \rangle$ is a radial departure of $\tilde{s}$.

We now derive a contradiction in Case~2 by mimicking the argument in Case~1, with $\beta_{j-1}$ playing the role of $x_1$.  We have $s_1 \circ f_2(\beta_{j-1}) > s_1 \circ f_2(x)$ for all $x \in (\beta_{j-1},0]$ since $\beta_{j-1}$ is a positive left departure of $s_1 \circ f_2$, and $s_1 \circ f_2(\beta_{j-1}) > s_1 \circ f_2(x)$ for all $x \in [0,\alpha_i]$ since $\alpha_{i-1}$ is a positive right contour point of $s_1 \circ f_2$ and $s_1 \circ f_2(\beta_{j-1}) > s_1 \circ f_2(\alpha_{i-1})$.  Thus $s_1 \circ f_2(\beta_{j-1}) > s_1 \circ f_2(x)$ for all $x \in (\beta_{j-1},\alpha_i]$.  Since $i \notin B_1$, we know that $\langle \beta_{j-1},\alpha_i \rangle$ is not a negative radial departure of $s_1 \circ f_2$, meaning that there exists $j' \in B_2$ such that $j' < j$ and $s_1 \circ f_2(\beta_{j'}) \leq s_1 \circ f_2(\alpha_i)$.  By Claim~\ref{claim:B2 partner}(1) applied to $j'$, there exists $k \in B_1$ such that $\alpha_{k-1} < x_2^{(j')}$ and $s_1 \circ f_2(\alpha_k) \leq s_1 \circ f_2(\beta_{j'})$.  Since $s_1 \circ f_2(\alpha_k) \leq s_1 \circ f_2(\beta_{j'}) \leq s_1 \circ f_2(\alpha_i)$, and $i \neq k$ since $i \notin B_1$ but $k \in B_1$, we conclude that $i < k$.  Now $x_1 < \beta_{j-1} < \beta_{j'} \leq x_1^{(j')}$ and $x_2 \leq \alpha_i < \alpha_{k-1} < x_2^{(j')}$, contradicting \ref{properly nested 2}.
\end{proof}

This completes the proof of Lemma~\ref{lem:bridged s}.
\end{proof}

\begin{example}
\label{ex:bridged s}
We provide an example of three maps $f_1$,$f_2$,$f_3$ satisfying the conditions of Lemma~\ref{lem:bridged s} in Figure~\ref{fig:bridging ex}.  In this example, $t_{f_1} = f_1$ and $t_{f_3} = f_3$.  This means in particular that a radial meandering factor for $f_1$ is $s_1 = \mathrm{id}$, so that $s_1 \circ f_2 = f_2$.

\begin{figure}
\begin{center}
\includegraphics{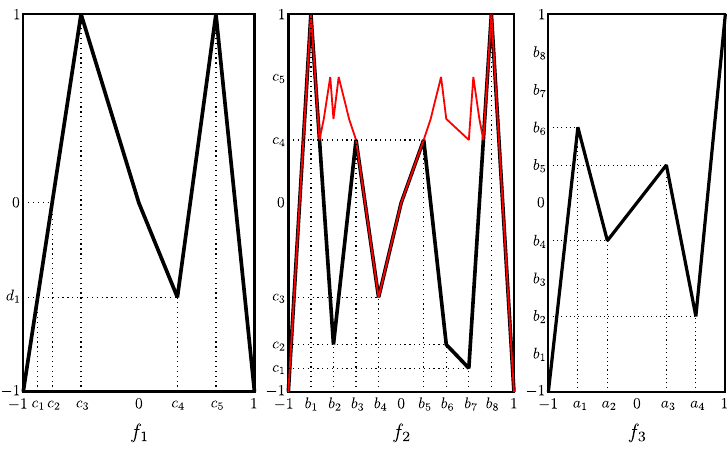}
\end{center}

\caption{An example of three maps $f_1$,$f_2$,$f_3$ satisfying the conditions of Lemma~\ref{lem:bridged s}, together with the map $\tilde{s}$ (in red) as constructed in the proof of Lemma~\ref{lem:bridged s}.}
\label{fig:bridging ex}
\end{figure}

We illustrate the construction of the map $\tilde{s}$ from Lemma~\ref{lem:bridged s} for this example.  The right contour points of $s_1 \circ f_2 = f_2$ are $\alpha_1 = b_5$, $\alpha_2 = b_7$, $\alpha_3 = b_8$, and $\alpha_4 = 1$.  Of these, $b_7$ and $1$ are the negative right contour points.  Note that $\alpha_2 = b_7 \in B_1$, because, for example, $\langle b_1,b_7 \rangle$ is a negative radial departure of $s_1\circ f_2=f_2$.  However, $\alpha_4 = 1 \notin B_1$, because there is no $x \in [-1,0)$ such that $\langle x,1 \rangle$ is a negative radial departure of $s_1\circ f_2=f_2$.

Therefore, the map $\tilde{s} {\restriction}_{[0,1]}$ for this example is given by
\[ \tilde{s}(x) = \begin{cases}
\hat{s}_{\alpha_2}(x) &\textrm{if $x \in [\alpha_1,\alpha_3]$} \\
s_1 \circ f_2(x) &\textrm{otherwise.}
\end{cases} \]

The left contour points of $s_1 \circ f_2 = f_2$ are $\beta_1 = b_4$, $\beta_2 = b_3$, $\beta_3 = b_2$, $\beta_4 = b_1$, and $\beta_5 = -1$.  Of these, $b_4$, $b_2$, and $-1$ are the negative left contour points.  We claim that $\beta_3 = b_2 \in B_2$.  To see this, let $x_1^{(3)} = b_2$, and let $x_2^{(3)} \in (b_5,b_6)$ be such that $f_2 \left( x_2^{(3)} \right) = c_3$.  Then let $w_2 = a_4$, and let $w_1 \in (a_1,a_2)$ be such that $f_3(w_1) = x_2^{(3)}$.  Since:
\begin{enumerate}[label=(\alph{*})]
\item $\langle w_1,w_2 \rangle$ is a negative radial departure of $f_3 = t_3$,
\item $x_2^{(3)}$ is a positive right departure of $\tilde{s}$,
\item $\tilde{s} \left( x_2^{(3)} \right) = c_5 > c_4 = \max \left\{ f_2(x): x \in \left[ x_1^{(3)},0 \right] \right\}$,
\item $x_1^{(3)} = b_2 \in [b_2,b_3) = [\beta_3,\beta_2)$, and
\item $f_2 \left( x_1^{(3)} \right) = c_2 < 0 = \min \left\{ \tilde{s}(x): x \in \left[ 0,x_2^{(3)} \right] \right\}$,
\end{enumerate}
it follows that $\beta_3 = b_2 \in B_2$.

On the other hand, we claim that $\beta_1 = b_4 \notin B_2$ and $\beta_5 = -1 \notin B_2$.  To see this, note that if $\langle w_1,w_2 \rangle$ is a negative radial departure of $f_3 = t_3$, then $w_1 \in [a_1,a_2)$, $f_3(w_1) \in (b_5,b_6]$, $w_2 \in (a_3,a_4]$, and $f_3(w_2) \in [b_2,b_4)$.  In particular, we have that $f_3(w_2) \notin [b_4,0) = [\beta_1,\beta_0)$ and $f_3(w_2) \notin [-1,b_1) = [\beta_5,\beta_4)$, and hence condition (d) of the definition of $B_2$ cannot be met for $\beta_1 = b_4$ and $\beta_5 = -1$.

Therefore, the map $\tilde{s} {\restriction}_{[-1,0]}$ for this example is given by
\[ \tilde{s}(x) = \begin{cases}
\hat{s}_{\beta_3}(x) &\textrm{if $x \in [\beta_4,\beta_2]$} \\
s_1 \circ f_2(x) &\textrm{otherwise.}
\end{cases} \]

The reader may find it instructive to verify directly in this example that the composition $\tilde{s} \circ t_{f_3} = \tilde{s} \circ f_3$ has no negative radial departures.
\end{example}

\begin{thm}
\label{thm:same contour}
Let $f_n \colon [-1,1] \to [-1,1]$, $n = 1,2,\ldots$, be piecewise-linear maps with $f_n(0) = 0$ for each $n$.  Suppose that for each $n$, $f_n$ and $f_n \circ f_{n+1}$ have the same radial contour factor.  Then there exists an embedding of $X = \varprojlim \left \langle [-1,1], f_n \right \rangle$ into $\mathbb{R}^2$ for which the point $\langle 0,0,\ldots \rangle \in X$ is accessible.
\end{thm}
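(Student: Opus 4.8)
The plan is to reduce Theorem~\ref{thm:same contour} to Proposition~\ref{prop:embed 0 accessible} by building a new inverse system, with the same inverse limit, in which every bonding map has all radial departures of the same orientation. The obvious first move is to replace the given system $\langle [-1,1], f_n \rangle$ by the system of radial contour factors $\langle [-1,1], t_{f_n} \rangle$, but this does not obviously have the same inverse limit. Instead I would work telescopically: group the maps and interpose the maps $\tilde{s}$ supplied by Lemma~\ref{lem:bridged s}. Concretely, for each $n$ set $g_n = f_n \circ f_{n+1}$; the hypothesis gives $t_{g_n} = t_{f_n \circ f_{n+1}} = t_{f_n}$, and one checks (using Lemma~3.6 of \cite{ammerlaan-anusic-hoehn2023}, as already invoked in the proof of Lemma~\ref{lem:bridged s}) that consecutive triples $g_{n}, g_{n+1}, g_{n+2}$ — or rather the maps $f_n$ themselves taken three at a time — satisfy the hypotheses $t_{f_1} = t_{f_1 \circ f_2}$ and $t_{f_2} = t_{f_2 \circ f_3}$ of Lemma~\ref{lem:bridged s}. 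Applying that Lemma with $(f_1, f_2, f_3) = (f_n, f_{n+1}, f_{n+2})$ produces a map $\tilde{s}_n \colon [-1,1] \to [-1,1]$ with $\tilde{s}_n(0)=0$, $t_{f_n} \circ \tilde{s}_n = f_n \circ f_{n+1}$, and such that $\tilde{s}_n \circ t_{f_{n+2}}$ has no negative radial departures.

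The key step is then to assemble these pieces into a new inverse system. I would pass to the subsequence of coordinates $1, 3, 5, \dots$ (or some fixed cofinal subsequence), so that the bonding maps become compositions of the form $f_{2k-1} \circ f_{2k}$, and interleave: between consecutive factor spaces insert a copy of $[-1,1]$ with bonding maps $t_{f_{2k-1}}$ going one way and $\tilde{s}_{2k-1} \circ (\text{appropriate contour factor})$ going the other. The relation $t_{f_n} \circ \tilde{s}_n = f_n \circ f_{n+1}$ guarantees that the composite of the two new bonding maps equals the old bonding map $f_n \circ f_{n+1}$ across the inserted coordinate, and the standard facts recorded in Section~\ref{sec:prelim} (dropping finitely many coordinates, composing bonding maps) then show the new inverse limit is homeomorphic to $X$, carrying $\langle 0,0,\ldots\rangle$ to the corresponding point. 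By construction the bonding maps of the new system are, alternately, radial contour factors $t_{f_n}$ — whose radial departures all have the same orientation, since a radial contour factor has no ``zig-zag'', i.e.\ all its radial departures are positive by Lemma~\ref{lem:s match} type reasoning (a radial meandering factor is sign-preserving, so $t_{f_n}$ itself has only positive radial departures) — and maps of the form $\tilde{s}_n \circ t_{f_{n+2}}$, which have no negative radial departures by conclusion (2) of Lemma~\ref{lem:bridged s}. Thus every bonding map of the new system has all radial departures of the same (positive) orientation, and Proposition~\ref{prop:embed 0 accessible} applies.

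The main obstacle is bookkeeping the indices of the telescoped system so that each application of Lemma~\ref{lem:bridged s} lines up: Lemma~\ref{lem:bridged s} needs three consecutive maps with the pairwise-equal-contour-factor property, and after interleaving one must check that the map $t_{f_{n+2}}$ (respectively its radial meandering factor) appearing in conclusion (2) is exactly the one serving as a bonding map at the next coordinate, so that the orientation conclusion transfers to an actual bonding map of the new system. I would handle this by first passing to a subsequence ensuring that $t_{f_n}$ and $t_{f_n \circ f_{n+1}}$ agree after telescoping — which is automatic from the hypothesis — and then defining the new system in blocks of fixed length, verifying commutativity block by block. A secondary point to verify carefully is that a radial contour factor $t_{f_n}$ genuinely has all radial departures positively oriented; this follows because $f_n = t_{f_n} \circ s$ for a sign-preserving $s$, so any radial departure of $t_{f_n}$ pulls back (via Proposition~\ref{prop:comp dep}, reading $g = \mathrm{id}$-like) to one of the same orientation, and the sign-preserving structure forbids negative ones — but I would state this as a short separate lemma rather than fold it into the main argument.
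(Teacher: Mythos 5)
Your overall strategy---apply Lemma~\ref{lem:bridged s} to the consecutive triples $(f_n,f_{n+1},f_{n+2})$, use the identity $t_{f_n}\circ\tilde s_n=f_n\circ f_{n+1}$ to rewrite the inverse limit, and finish with Proposition~\ref{prop:embed 0 accessible}---is exactly the paper's strategy, but the assembly step that you dismiss as bookkeeping is where the argument breaks, and it breaks on a false claim. A radial contour factor does \emph{not} in general have all radial departures of one orientation: the contour factor records, rather than removes, the zig-zag of $f$ at $0$. Indeed, by Lemma~\ref{lem:s match} together with Proposition~\ref{prop:comp dep}, every radial departure of $f$ corresponds to a radial departure of $t_f$ of the same orientation, so $t_f$ has departures of both orientations whenever $f$ does; concretely, in Example~\ref{ex:bridged s} one has $t_{f_3}=f_3$ and $\langle w_1,w_2\rangle$ there is a \emph{negative} radial departure of $t_{f_3}$. (Your parenthetical justification conflates the sign-preserving meandering factor $s$ with $t_f$ itself; and note that if your claim were true, then writing each $f_n=t_{f_n}\circ s_n$ with $s_n$ sign-preserving and interleaving would settle the full Nadler--Quinn question with no hypothesis on contour factors and no need for Lemma~\ref{lem:bridged s} at all---a sign that something is off.) Consequently any system in which the maps $t_{f_{2k-1}}$ survive as bonding maps cannot be fed to Proposition~\ref{prop:embed 0 accessible}.

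There is also an inconsistency in which system you are building. If the two bonding maps inserted at each stage are $t_{f_{2k-1}}$ and $\tilde s_{2k-1}$, then the composites are $f_{2k-1}\circ f_{2k}$ and the inverse limit is indeed $X$, but then you would need \emph{both} $t_{f_{2k-1}}$ and $\tilde s_{2k-1}$ separately to have single-orientation departures, and neither is guaranteed: conclusion (2) of Lemma~\ref{lem:bridged s} concerns only the composition $\tilde s_n\circ t_{f_{n+2}}$, not $\tilde s_n$ alone (the construction only bridges those negative departures that interact with departures of $t_{f_{n+2}}$). If instead, as your final paragraph suggests, the bonding maps are $t_{f_{2k-1}}$ and $\tilde s_{2k-1}\circ t_{f_{2k+1}}$, then the composite is $f_{2k-1}\circ f_{2k}\circ t_{f_{2k+1}}$, which does not match the telescoped bonding maps of the original system, so the identification with $X$ fails. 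The correct assembly, which is what the paper does, is: pass to $\varprojlim\langle[-1,1],f_n\circ f_{n+1}\rangle_{n\text{ odd}}=\varprojlim\langle[-1,1],t_{f_n}\circ\tilde s_n\rangle_{n\text{ odd}}$, un-compose into the alternating system $t_{f_1},\tilde s_1,t_{f_3},\tilde s_3,\ldots$, \emph{drop the first coordinate}, and re-compose in the shifted pairs to get bonding maps $\tilde s_1\circ t_{f_3},\ \tilde s_3\circ t_{f_5},\ldots$. In that system every bonding map is covered by Lemma~\ref{lem:bridged s}(2), no $t_{f_n}$ survives as a bonding map, the explicit homeomorphism $\langle x_n\rangle\mapsto\langle\tilde s_{2k-1}(x_{2k+1})\rangle$ fixes $\langle 0,0,\ldots\rangle$, and Proposition~\ref{prop:embed 0 accessible} finishes the proof.
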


The proof of Theorem~\ref{thm:same contour} is identical to the proof of Theorem~7.3 in \cite{ammerlaan-anusic-hoehn2023}, except that we substitute Lemma~\ref{lem:bridged s} above for Lemma~7.2 of that paper.  We include the details for completeness.

\begin{proof}
For each odd $n \geq 1$, let $t_n$ be the radial contour factor of $f_n$, and apply Lemma~\ref{lem:bridged s} to obtain a map $\tilde{s}_n \colon [-1,1] \to [-1,1]$ with $\tilde{s}_n(0) = 0$ such that $t_n \circ \tilde{s}_n = f_n \circ f_{n+1}$ and $\tilde{s}_n \circ t_{n+2}$ has no negative radial departures.  Then by Proposition~\ref{prop:embed 0 accessible} there exists an embedding of $\displaystyle \varprojlim \left \langle [-1,1], \tilde{s}_n \circ t_{n+2} \right \rangle_{n \textrm{ odd}}$ into $\mathbb{R}^2$ for which the point $\langle 0,0,\ldots \rangle$ is accessible.

Also, by standard properties of inverse limits,
\begin{align*}
X = \varprojlim \left \langle [-1,1],f_n \right \rangle &\approx \varprojlim \left \langle [-1,1],f_n \circ f_{n+1} \right \rangle_{n \textrm{ odd}} \quad \textrm{by composing bonding maps} \\
&= \varprojlim \left \langle [-1,1],t_n \circ \tilde{s}_n \right \rangle_{n \textrm{ odd}} \\
&\approx \varprojlim \left \langle [-1,1], t_1, [-1,1], \tilde{s}_1, [-1,1], t_3, [-1,1], \tilde{s}_3, \ldots \right \rangle \\ & \hspace{1.5in} \textrm{by (un)composing bonding maps} \\
&\approx \varprojlim \left \langle [-1,1], \tilde{s}_1, [-1,1], t_3, [-1,1], \tilde{s}_3, [-1,1], t_5, \ldots \right \rangle \\ & \hspace{1.5in} \textrm{by dropping first coordinate} \\
&\approx \varprojlim \left \langle [-1,1],\tilde{s}_n \circ t_{n+2} \right \rangle_{n \textrm{ odd}} \quad \textrm{by composing bonding maps.}
\end{align*}
In fact, an explicit homeomorphism between $X$ and $\displaystyle \varprojlim \left \langle [-1,1],\tilde{s}_n \circ t_{n+2} \right \rangle_{n \textrm{ odd}}$ is given by
\[ h \left( \langle x_n \rangle_{n=1}^\infty \right) = \langle \tilde{s}_{2k-1}(x_{2k+1}) \rangle_{k=1}^\infty .\]  Clearly $h(\langle 0,0,\ldots \rangle) = \langle 0,0,\ldots \rangle$.  Thus there exists an embedding of $X$ into $\mathbb{R}^2$ for which the point $\langle 0,0,\ldots \rangle \in X$ is accessible.
\end{proof}

\section{Simplicial inverse systems of arcs}
\label{sec:simplicial}

As an application of Theorem~\ref{thm:same contour}, we give a positive answer to the question of Nadler and Quinn for a broad class of arc-like continua, in Theorem~\ref{thm:simplicial} below.

An inverse system $\left \langle [-1,1],f_n \right \rangle$ is \emph{simplicial} if there exist finite sets $S_1,S_2,\ldots$ such that for each $n$, $-1,1 \in S_n$, $f_n(S_{n+1}) \subseteq S_n$, and if $I$ is any component of $[-1,1] \smallsetminus S_{n+1}$ then $f_n$ is either constant on $I$, or $f_n$ is linear on $I$ and $f_n(I) \cap S_n = \emptyset$.

Examples of arc-like continua which are inverse limits of simplicial inverse systems of arcs include Knaster continua (see e.g.\ \cite{rogers1970}), the two examples described by Minc in \cite[Figure 3, p.297]{problems2018}, and all continua which are inverse limits of a single bonding map that is piecewise monotone and post-critically finite.  As such, the next Theorem generalizes Corollary 5.8 of \cite{anusic2021}.  For other results about accessible points of Knaster continua, see \cite{mayer1982}, \cite{mayer1983}, and \cite{debski-tymchatyn1993}.

\begin{thm}
\label{thm:simplicial}
If $X$ is an arc-like continuum which is the inverse limit of a simplicial inverse system of arcs, and if $x \in X$, then there exists an embedding of $X$ into $\mathbb{R}^2$ for which $x$ is an accessible point.
\end{thm}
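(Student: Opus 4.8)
The plan is to reduce to Theorem~\ref{thm:same contour} by showing that a simplicial inverse system of arcs can, after composing and regrouping bonding maps, be arranged so that consecutive bonding maps $f_n$ and $f_n \circ f_{n+1}$ have the same radial contour factor. First I would perform the standard reduction used already in the paper: given $x = \langle x_n \rangle \in X$, apply a homeomorphism of each factor arc $[-1,1]$ (an affine reparametrization, which preserves the simplicial structure after enlarging the sets $S_n$) so that $x_n = 0$ for every $n$; this puts us in the setting of the Nadler--Quinn question as restated via Question~1, with all bonding maps piecewise-linear and fixing $0$. I should also check that $f_n{\restriction}_{[-1,0]}$ and $f_n{\restriction}_{[0,1]}$ can be taken non-constant (the tacit standing assumption of the paper): if $f_n$ is constant on one side, that side contributes nothing and can be absorbed; alternatively one composes enough maps so that both halves become non-constant (if some composition $f_1\circ\cdots\circ f_k$ is constant on a side then $X$ would be degenerate on that side, which is handled trivially).

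The heart of the argument is the claim that, for a simplicial system with $0 \in S_n$ for all $n$ (which we may arrange by adding $0$ to each $S_n$ and pulling back: replace $S_n$ by $S_n \cup \{0\} \cup f_n^{-1}(\ldots)$ going upward — one must verify the simplicial conditions are preserved, which they are since we only refine the partitions), the radial contour factor stabilizes under composition after finitely many steps. The key observation is that if $\langle [-1,1], f_n\rangle$ is simplicial with breakpoints in $S_n$, then the right contour points of $f_n{\restriction}_{[0,1]}$ all lie in $S_{n+1}$, and $f_n$ maps them into $S_n$; moreover the radial contour factor $t_{f_n}$ is itself a simplicial-type map whose ``complexity'' (number of contour points, i.e.\ length of the zig-zag pattern of $f_n$ about $0$) is bounded by $|S_{n+1}|$. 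When we compose, Proposition~\ref{prop:comp dep} tells us exactly how radial departures of $f_n \circ f_{n+1}$ arise from those of $f_n$ and $f_{n+1}$: a new ``zig-zag'' in $f_n \circ f_{n+1}$ not already present in $f_n$ would require $f_{n+1}$ to create a radial departure that $f_n$ then ``sees'' as reversing orientation. The point is that since everything is simplicial and $S_n$ is finite, the sequence of radial contour factors $t_{f_n \circ f_{n+1} \circ \cdots \circ f_{n+k}}$, for fixed $n$ and increasing $k$, can only gain finitely much complexity before stabilizing — there is a uniform bound because the contour points are pinned to the finite grids $S_m$. Hence there is an increasing sequence $n_1 < n_2 < \cdots$ such that, setting $g_i = f_{n_i} \circ f_{n_i+1} \circ \cdots \circ f_{n_{i+1}-1}$, we have $t_{g_i} = t_{g_i \circ g_{i+1}}$ for all $i$; that is, the regrouped system satisfies the hypothesis of Theorem~\ref{thm:same contour}.

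Given this, I would finish as follows: by the ``composing bonding maps'' move, $X = \varprojlim\langle [-1,1], f_n\rangle \approx \varprojlim\langle [-1,1], g_i\rangle$, and the image of $x = \langle 0,0,\ldots\rangle$ under this homeomorphism is again $\langle 0,0,\ldots\rangle$. The maps $g_i$ are piecewise-linear, fix $0$, and satisfy $t_{g_i} = t_{g_i \circ g_{i+1}}$ for each $i$, so Theorem~\ref{thm:same contour} produces a plane embedding of $\varprojlim\langle [-1,1], g_i\rangle$ in which $\langle 0,0,\ldots\rangle$ is accessible. Transporting back through the homeomorphisms above, $x$ is accessible in the resulting embedding of $X$, as required. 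The main obstacle I anticipate is making the stabilization claim precise: one needs the right invariant — I expect it is essentially the radial contour factor $t_{f_n \circ \cdots \circ f_{n+k}}$ viewed up to reparametrization, together with a monotonicity/finiteness argument showing that as $k$ grows this invariant, whose combinatorial size is controlled by the fixed finite set $S_{n+1}$ (since contour points sit inside $S_{n+1}$ and their images inside $S_n$), must eventually be constant. Care is also needed to ensure the regrouped $g_i$ still have both one-sided restrictions non-constant, and that adding $0$ and refining the grids $S_n$ genuinely preserves the simplicial property in the sense of the definition in Section~\ref{sec:simplicial}.
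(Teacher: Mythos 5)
Your overall strategy is the paper's: reduce to Theorem~\ref{thm:same contour} by regrouping the bonding maps, using the finiteness of the grids $S_n$ to control the radial contour factors of composites (and your preliminary reductions --- moving $x$ to $\langle 0,0,\ldots\rangle$ by grid-respecting homeomorphisms, disposing of the endpoint and one-side-constant cases --- all match the paper). The gap is in the one step you yourself flag as the main obstacle: the claim that for fixed $n$ the radial contour factors $t_{f_n\circ\cdots\circ f_{n+k}}$ \emph{stabilize} as $k$ grows, justified by a ``monotonicity/finiteness'' argument. Neither half of that justification is available. Monotonicity of contour complexity under composing on the right is not established and is not true in general: writing $f^m_n=f_n\circ\cdots\circ f_{m-1}$, the inner composite $f_{n+1}\circ\cdots\circ f_{m-1}$ can have small image, so $f^m_n$ only ``sees'' a piece of $f_n$ near $0$ and its contour factor can be much simpler than $t_{f_n}$, while composing further can re-complicate it. And even granting a bound on complexity, finiteness only says the sequence $t_{f^m_n}$ takes finitely many values; it can a priori oscillate among them forever, so eventual constancy does not follow. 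Since your choice of the regrouping indices $n_1<n_2<\cdots$ rests entirely on this stabilization, the construction of the $g_i$ with $t_{g_i}=t_{g_i\circ g_{i+1}}$ is not justified as written.

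The fix --- and this is exactly what the paper does --- is to replace stabilization by a pigeonhole extraction of nested infinite sets. One first checks (by induction on the simplicial condition) that each composite $f^m_n$ is again simplicial from the grid $S_m$ to the grid $S_n$, so its radial contour points have values in the finite set $S_n$; since contour values form an ``expanding'' sequence, only finitely many radial contour factors can occur among $\{f^m_n : m>n\}$. Hence there is an infinite $J_1\subseteq\{2,3,\ldots\}$ on which all $f^m_1$ share one radial contour factor; set $n_2=\min J_1$, extract an infinite $J_2\subseteq J_1\smallsetminus\{n_2\}$ on which all $f^m_{n_2}$ share one radial contour factor, and so on. With $F_k=f^{n_{k+1}}_{n_k}$ one gets $t_{F_k}=t_{F_k\circ F_{k+1}}$ simply because $n_{k+1},n_{k+2}\in J_k$, and Theorem~\ref{thm:same contour} applies. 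So your finiteness observation (contour points pinned to $S_m$, values in $S_n$) is the right ingredient, but it must be channelled through this subsequence argument rather than through an eventual-constancy claim, which the combinatorics do not support.
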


\begin{proof}
Let $\left \langle [-1,1],f_n \right \rangle$ be a simplicial inverse system of arcs, and let $S_n$, $n = 1,2,\ldots$, be finite sets as in the definition of a simplicial inverse system.  Let $x = \langle x_1,x_2,\ldots \rangle \in X = \varprojlim \left \langle [-1,1],f_n \right \rangle$.  We first show that we may assume without loss of generality that $x_n = 0$ for all $n$.

It is well-known that if $x_n = \pm 1$ for infinitely many $n$, then $x$ is an endpoint of $X$ and one can easily embed $X$ in $\mathbb{R}^2$ so as to make $x$ an accessible point (see e.g.\ \cite[p.295]{problems2018}).  Hence we may as well assume that $x_n \neq \pm 1$ for all but finitely many $n$, and in fact by dropping finitely many coordinates we may assume that $x_n \neq \pm 1$ for all $n$.

Note that if $S_n = \{-1,1\}$ for all but finitely many $n$, then $X$ is either an arc or a single point, and the result follows immediately.  Therefore, we may assume that $S_n$ has at least one interior point of $[-1,1]$ for infinitely many $n$, and in fact, by composing bonding maps if necessary, we may assume that $S_n$ has at least one interior point of $[-1,1]$ for all $n$.

Now, for each $n = 1,2,\ldots$, let $h_n \colon [-1,1] \to [-1,1]$ be a piecewise-linear homeomorphism such that $h_n(x_n) = 0$, and for each component $I$ of $[-1,1] \smallsetminus S_n$, $h_n$ is linear on $I$.  Define $f_n' = h_n \circ f_n \circ h_{n+1}^{-1}$ and $S_n' = h_n(S_n)$.  Then it is straightforward to see that $\left \langle [-1,1],f_n' \right \rangle$ is a simplicial inverse system of arcs with respect to these finite sets $S_n'$.  Moreover, $\varprojlim \left \langle [-1,1],f_n' \right \rangle \approx X$, and in fact an explicit homeomorphism $h \colon X \to \varprojlim \left \langle [-1,1],f_n' \right \rangle$ is given by $h \left( \langle y_n \rangle_{n=1}^\infty \right) = \langle h_n(y_n) \rangle_{n=1}^\infty$, and clearly $h(x) = \langle 0,0,\ldots \rangle$.  Thus, we may assume that $x_n = 0$ for all $n$.

We point out that we may also assume that for each $n \geq 1$, the restrictions $f_n {\restriction}_{[-1,0]}$ and $f_n {\restriction}_{[0,1]}$ are both non-constant.  This is because if either $f_n {\restriction}_{[-1,0]}$ or $f_n {\restriction}_{[0,1]}$ is constant for infinitely many $n$, then by taking compositions of the bonding maps we may obtain that either $f_n {\restriction}_{[-1,0]}$ or $f_n {\restriction}_{[0,1]}$ is constant for all $n$, but then none of the maps $f_n$ would have any radial departures, hence by Proposition~\ref{prop:embed 0 accessible} there would exist an embedding of $X$ into $\mathbb{R}^2$ for which the point $\langle 0,0,\ldots \rangle \in X$ is accessible.

To complete the proof, we show that we may take compositions of the bonding maps $f_n$ to produce an inverse system for which the conditions of Theorem~\ref{thm:same contour} are met.  For each $n,m$ with $n < m$, define $f^m_n = f_n \circ f_{n+1} \circ \cdots \circ f_{m-1}$.  In this notation, $f_n = f^{n+1}_n$.  Also, if $n < k < m$, then $f^m_n = f^k_n \circ f^m_k$.

Let $n_1 = 1$.  For each $m = 2,3,\ldots$, for any radial contour point $x$ of $f^m_1$, clearly $f^m_1(x) \in S_1$.  Since $S_1$ is finite, it follows that there are only finitely many possible radial contour factors for the maps $f^m_1$, $m = 2,3,\ldots$.  Thus there exists an infinite set of integers $J_1 \subseteq \{2,3,\ldots\}$ such that for each $m,m' \in J_1$, the maps $f^m_1$ and $f^{m'}_1$ have the same radial contour factor.

Continuing recursively, let $k \geq 2$ and suppose $n_{k-1}$ and $J_{k-1}$ have been defined.  Let $n_k$ be the smallest element of $J_{k-1}$.  As above, since $S_{n_k}$ is finite, it follows that there are only finitely many possible radial contour factors for the maps $f^m_{n_k}$, $m \in J_{k-1} \smallsetminus \{n_k\}$.  Thus there exists an infinite set of integers $J_k \subseteq J_{k-1} \smallsetminus \{n_k\}$ such that for each $m,m' \in J_k$, the maps $f^m_{n_k}$ and $f^{m'}_{n_k}$ have the same radial contour factor.

For each $k = 1,2,\ldots$, define $F_k = f^{n_{k+1}}_{n_k}$.  Now we have that $X \approx \varprojlim \left \langle [-1,1], F_k \right \rangle$, since this latter system is obtained from the original by composing bonding maps, and clearly the point $x = \langle 0,0,\ldots \rangle$ in the original system corresponds to the point $\langle 0,0,\ldots \rangle$ in this latter system.  Observe that for each $k = 1,2,\ldots$,
\[ F_k \circ F_{k+1} = f^{n_{k+1}}_{n_k} \circ f^{n_{k+2}}_{n_{k+1}} = f^{n_{k+2}}_{n_k} \]
and this map has the same radial contour factor as $F_k = f^{n_{k+1}}_{n_k}$ by the choice of $J_k$.  Therefore, by Theorem~\ref{thm:same contour}, there exists an embedding of $X \approx \varprojlim \left \langle [-1,1], F_k \right \rangle$ into $\mathbb{R}^2$ for which the point $\langle 0,0,\ldots \rangle \in X$ is accessible.
\end{proof}

\bibliographystyle{amsplain}
\bibliography{Bridging}

\providecommand{\bysame}{\leavevmode\hbox to3em{\hrulefill}\thinspace}
\providecommand{\MR}{\relax\ifhmode\unskip\space\fi MR }
\providecommand{\MRhref}[2]{%
  \href{http://www.ams.org/mathscinet-getitem?mr=#1}{#2}
}
\providecommand{\href}[2]{#2}
\begin{thebibliography}{10}

\bibitem{ammerlaan-anusic-hoehn2023}
Andrea Ammerlaan, Ana Anu\v{s}i\'{c}, and Logan~C. Hoehn, \emph{Radial
  departures and plane embeddings of arc-like continua}, Houston J. Math. (to
  appear).

\bibitem{anusic2021}
Ana Anu\v{s}i\'{c}, \emph{Planar embeddings of {M}inc's continuum and
  generalizations}, Topology Appl. \textbf{292} (2021), Paper No. 107519, 13.
  \MR{4223408}

\bibitem{brown1960}
Morton Brown, \emph{Some applications of an approximation theorem for inverse
  limits}, Proc. Amer. Math. Soc. \textbf{11} (1960), 478--483. \MR{115157}

\bibitem{debski-tymchatyn1993}
W.~D\polhk{e}bski and E.~D. Tymchatyn, \emph{A note on accessible composants in
  {K}naster continua}, Houston J. Math. \textbf{19} (1993), no.~3, 435--442.
  \MR{1242430}

\bibitem{problems2018}
Logan~C. Hoehn, Piotr Minc, and Murat Tuncali, \emph{Problems in continuum
  theory in memory of {S}am {B}. {N}adler, {J}r.}, Topology Proc. \textbf{52}
  (2018), 281--308, With contributions. \MR{3773586}

\bibitem{problems2002}
Alejandro Illanes, Sergio Mac\'{i}as, and Wayne Lewis (eds.), \emph{Continuum
  theory}, Lecture Notes in Pure and Applied Mathematics, vol. 230, Marcel
  Dekker, Inc., New York, 2002. \MR{2001430}

\bibitem{lewis1983}
Wayne Lewis, \emph{Continuum theory problems}, vol.~8, 1983, pp.~361--394.
  \MR{765091}

\bibitem{mayer1982}
John~C. Mayer, \emph{Embeddings of plane continua and the fixed point
  property}, ProQuest LLC, Ann Arbor, MI, 1982, Thesis (Ph.D.)--University of
  Florida. \MR{2632164}

\bibitem{mayer1983}
\bysame, \emph{Inequivalent embeddings and prime ends}, vol.~8, 1983,
  pp.~99--159. \MR{738473}

\bibitem{nadler1972}
Sam~B. Nadler, Jr., \emph{Some results and problems about embedding certain
  compactifications}, Proceedings of the {U}niversity of {O}klahoma {T}opology
  {C}onference {D}edicated to {R}obert {L}ee {M}oore ({N}orman, {O}kla., 1972),
  Univ. of Oklahoma, Norman, Okla., 1972, pp.~222--233. \MR{0362256}

\bibitem{nadler-quinn1972}
Sam~B. Nadler, Jr. and J.~Quinn, \emph{Embeddability and structure properties
  of real curves}, Memoirs of the American Mathematical Society, No. 125,
  American Mathematical Society, Providence, R.I., 1972. \MR{0353278}

\bibitem{rogers1970}
J.~W. Rogers, Jr., \emph{On mapping indecomposable continua onto certain
  chainable indecomposable continua}, Proc. Amer. Math. Soc. \textbf{25}
  (1970), 449--456. \MR{256361}

\end{thebibliography}

\end{document}